\documentclass[11pt]{article}
\usepackage[utf8]{inputenc}
\usepackage[T1]{fontenc}
\usepackage{lmodern}
\usepackage{microtype}
\DisableLigatures{encoding = *, family = tt*}
\usepackage{needspace}
\usepackage{amsmath,amssymb,amsthm}
\usepackage{booktabs}
\usepackage{array}
\usepackage{xcolor}
\usepackage[margin=2.5cm]{geometry}
\usepackage{tikz}
\usetikzlibrary{arrows.meta,positioning,calc}
\usepackage[hidelinks]{hyperref}
\usepackage{xurl}
\definecolor{wplus}{RGB}{38,92,170}
\definecolor{wminus}{RGB}{205,65,55}
\definecolor{tblue}{RGB}{38,92,170}\definecolor{torange}{RGB}{230,130,20}
\definecolor{tgreen}{RGB}{40,150,70}\definecolor{tred}{RGB}{205,65,55}
\theoremstyle{plain}
\newtheorem{theorem}{Theorem}
\newtheorem{proposition}[theorem]{Proposition}
\newtheorem{lemma}[theorem]{Lemma}
\newtheorem{corollary}[theorem]{Corollary}
\theoremstyle{definition}
\newtheorem{definition}[theorem]{Definition}

\newcommand{\W}{\mathrm{W}}
\newcommand{\PAF}{\mathrm{PAF}}
\newcommand{\circm}{\operatorname{circ}}
\newcommand{\diag}{\operatorname{diag}}
\newcommand{\T}{^{\top}}
\newcommand{\one}{\mathbf{1}}
\newcommand{\blue}[1]{{\color{wplus}\textbf{#1}}}
\newcommand{\red}[1]{{\color{wminus}\textbf{#1}}}
\newcommand{\mono}[1]{{\ttfamily\small #1}}
\newcommand{\sq}[2]{\tikz[baseline=-0.5ex]\fill[#1] (0,0) rectangle (0.22,0.22);}

\title{Explicit weighing matrices $\W(n,k)$ for all $n\equiv 0 \pmod 4$ with $4\le n\le 108$, and all $1\le k\le n$}
\author{Adri\'an Mac\'ias-Quintero\thanks{ORCID: \href{https://orcid.org/0009-0002-3638-9559}{0009-0002-3638-9559}. Code and data, archived at Zenodo: see~\cite{zenodo}.}}
\date{October 2026}

\begin{document}
\maketitle
\begin{abstract}
We present an explicit weighing matrix $\W(n,k)$ for every order $n\equiv 0\pmod 4$ with $4\le n\le 108$ and every weight $1\le k\le n$: $1512$ matrices in total, each checked in exact integer arithmetic.
Of these, $961$ come from classical constructions (identity, direct sums, Kronecker products, Paley matrices and skew-Hadamard doubling) and $551$ are Goethals--Seidel matrices, found by exhaustive search ($24$ matrices, $n\le 24$) or by simulated annealing on the periodic autocorrelation cost ($527$ matrices, $n\ge 28$).
The note is self-contained as far as the methods are concerned: it presents all the construction methods used and the algorithm that selects among them, explains the algorithm step by step, proves every identity on which the constructions rely, and illustrates each method with a matrix taken from the catalogue. It does not reproduce the $1512$ matrices, which are available in the repository. It also gives a short parity obstruction to Goethals--Seidel matrices $\W(4m,4m-1)$ with $m$ even, and lists the generating sequences of thirteen matrices whose parameters appear as unresolved in historical tables consulted by the author; their status in the current literature is still being checked, and no claim of priority is made.
\end{abstract}

\section{Introduction}\label{sec:intro}

\begin{definition}
A \emph{weighing matrix} $\W(n,k)$ is an $n\times n$ matrix $W$ with all entries in $\{-1,0,1\}$ such that $WW\T=kI_n$, where $W\T$ is the transpose of $W$ and $I_n$ the identity matrix of order $n$. The number $k$ is its \emph{weight}.
\end{definition}

Reading the equation entry by entry, $WW\T=kI_n$ says that every row of $W$ has exactly $k$ nonzero entries (the diagonal entries of $WW\T$ are the sums of the squares of the entries of each row) and that any two different rows are orthogonal (the other entries of $WW\T$ are the dot products of two rows). Since $W$ is square, $WW\T=kI_n$ implies $W\T W=kI_n$ as well (for $k\ge1$, $W$ is invertible with inverse $k^{-1}W\T$), so the same holds for columns. A \emph{Hadamard matrix} is a $\W(n,n)$, that is, a matrix with entries $\pm1$ and $HH\T=nI_n$. We reserve the letter $H$ for Hadamard matrices only; the simplest one is
\[
H_2=\begin{pmatrix}1&1\\1&-1\end{pmatrix},\qquad H_2H_2\T=\begin{pmatrix}1\cdot1+1\cdot1&1\cdot1+1\cdot(-1)\\1\cdot1+(-1)\cdot1&1\cdot1+(-1)\cdot(-1)\end{pmatrix}=\begin{pmatrix}2&0\\0&2\end{pmatrix}=2I_2 .
\]

Historical sources on existence include the chapter on orthogonal designs of the \emph{Handbook of Combinatorial Designs}~\cite{handbook}, whose later updates are recorded in~\cite{dinitz}, and the survey of Koukouvinos and Seberry~\cite{ks97}. An undecided entry in a historical table does not, by itself, establish that a case is still open.

\paragraph{Related work.}
The conjecture that a weighing matrix $\W(n,k)$ exists for every $k<n$ when $n\equiv0\pmod4$ is discussed in~\cite{gpw74}, which proves it for $n$ a power of $2$, and in~\cite{gw74}; a general reference is~\cite{gsbook}. The Goethals--Seidel array yields weighing matrices from four ternary circulant sequences whose periodic autocorrelation functions sum to zero at every nonzero shift, the sum at shift zero being the weight; using zero-autocorrelation sequences in this way is a classical approach~\cite{ks93}; for arrays and sequences in general see~\cite{sy20}. Computer searches for weighing matrices built from circulant blocks have also been carried out, in one case with genetic algorithms~\cite{bdgks24,kkps12,kks12}. According to their abstracts, \cite{kkps12,kks12} use two circulant blocks (order $2n$) and \cite{bdgks24} partitioned group matrices, which generalize one- and two-circulant constructions; our construction instead uses four circulants in the Goethals--Seidel array (order $4m$), with an exhaustive search for $m\le6$ and simulated annealing on the autocorrelation cost for $m\ge7$. We have not compared the parameters covered by those papers in detail. Small weighing matrices have been classified~\cite{hm12}, and symmetric weighing matrices and the classification of small integer weighing matrices have been studied separately~\cite{gsa23,gbds26}; here we impose no symmetry.

\paragraph{What this note is about.}
The parameter set is
\[
\mathcal S=\{(4m,k):1\le m\le27,\ 1\le k\le4m\},\qquad |\mathcal S|=4\sum_{m=1}^{27}m=1512 .
\]
\begin{theorem}\label{thm:main}
For every $(n,k)\in\mathcal S$ there exists a weighing matrix $\W(n,k)$.
\end{theorem}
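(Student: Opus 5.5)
The proof is constructive and computer-assisted. For each of the $1512$ pairs in $\mathcal S$ we exhibit a matrix and verify $WW\T=kI_n$ in exact integer arithmetic. The plan is to cover as many pairs as possible with classical constructions whose correctness follows from short algebraic identities. The remaining pairs are filled by Goethals--Seidel matrices found by computer search. We then check that the union of the two lists is all of $\mathcal S$.

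\textbf{Step 1: classical constructions.} We first prove the identities behind the classical constructions.
\begin{itemize}
\item $I_n$ is a $\W(n,1)$.
\item A direct sum $W_1\oplus W_2$ of a $\W(n_1,k)$ and a $\W(n_2,k)$ is a $\W(n_1+n_2,k)$.
\item By $(A\otimes B)(A\otimes B)\T=AA\T\otimes BB\T$, a Kronecker product $\W(n_1,k_1)\otimes\W(n_2,k_2)$ is a $\W(n_1n_2,k_1k_2)$.
\item For $q$ a prime power, the Paley conference-type matrices give $\W(q+1,q)$, and Paley Hadamard matrices give $\W(n,n)$.
\item A skew $\W(m,k)$ $S$ with $S\T=-S$ is doubled via $\begin{pmatrix}S+I&S-I\\S+I&-S+I\end{pmatrix}$ and related arrays, which yields weights $k+1$ in order $2m$.
\end{itemize}
A table-driven routine then runs over $m=1,\dots,27$ and $k$. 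It records every pair reachable by closing the base set under these operations. Examples are $k=1,2,4,\ldots$ via $I\otimes H_2$-type products, and Hadamard matrices for all $n\le 108$ with $n\equiv0\pmod4$, which are known in this range via Paley, Williamson or Kronecker constructions.

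\textbf{Step 2: Goethals--Seidel constructions.} For the pairs not covered, we use the Goethals--Seidel array. We prove that if four ternary sequences $a,b,c,d$ of length $m$ satisfy
\[
\PAF_a(s)+\PAF_b(s)+\PAF_c(s)+\PAF_d(s)=0\quad(1\le s\le m-1)
\]
and have total weight $k$, then with circulants $A,B,C,D$ and the back-circulant $R$ the array
\[
\begin{pmatrix}A&BR&CR&DR\\-BR&A&D\T R&-C\T R\\-CR&-D\T R&A&B\T R\\-DR&C\T R&-B\T R&A\end{pmatrix}
\]
is a $\W(4m,k)$. The proof uses that circulants commute, $XR=RX\T$, and $AA\T+BB\T+CC\T+DD\T=kI_m$. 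The search itself is handled per regime:
\begin{itemize}
\item For $m\le 6$ the search over $3^{4m}$ quadruples, up to symmetries, is exhaustive.
\item For $7\le m\le27$ we run simulated annealing on the cost $\sum_{s\ne0}\bigl(\sum\PAF(s)\bigr)^2$ at fixed weight, moving by swapping or sign-flipping entries. A run succeeds when the cost reaches zero.
\end{itemize}
Every output matrix is then checked directly.

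\textbf{Main obstacle.} The hard step is the annealing for large $m$ and weights close to $4m$, especially $k=4m-1$ and $k=4m-2$. For $k=4m-1$ with $m$ even I expect a parity obstruction. In that case $\sum_s\PAF_x(s)\equiv w_x\pmod 2$ combined with the constraint forces $(\sum x_i)^2$ sums to equal $k$, which is a sum of four squares with a parity restriction. Those cases must therefore come from Step~1 instead, for example Paley $\W(q+1,q)$ when $4m-1$ is a prime power, or doubling constructions. The plan, then, is to schedule the classical coverage first and to check that the pairs it leaves out are all of the form admissible for Goethals--Seidel. Admissible here means that $k$ is a sum of four squares $\bigl(\sum a_i\bigr)^2+\dots$ with row sums of matching parity. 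The intensive search is then spent only on those pairs.
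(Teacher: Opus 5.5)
Your overall route matches the paper's. The theorem is proved by a finite certificate: $1512$ explicit matrices, each checked in exact arithmetic. They come from identity, direct-sum, Kronecker and Paley constructions, from Goethals--Seidel sequences, and from a doubling construction for the pairs the array cannot reach. Your squared cost and sign-flip moves in the annealing are immaterial, because only the final exact check counts. However, the doubling step as you state it does not produce the six pairs that depend on it.

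For a skew $\W(m,k)$ $S$, your array $M=\begin{pmatrix}S+I&S-I\\S+I&-S+I\end{pmatrix}$ satisfies $MM^{\top}=2(k+1)I_{2m}$; it does not have weight $k+1$. Nothing else in your list gives weight $n-1$ for $(16,15)$, $(40,39)$, $(56,55)$, $(64,63)$, $(88,87)$, $(96,95)$:
\begin{itemize}
\item the Goethals--Seidel array is impossible there;
\item Paley I needs $n-1$ to be a prime power, and $15,39,55,63,87,95$ are not;
\item a direct sum needs $k\le n/2$;
\item a Kronecker product $\W(a,k_1)\otimes\W(b,k_2)$ with $a,b\ge2$ cannot have $k_1k_2=ab-1$, since $k_1=a$ forces $a\mid 1$, while $k_1\le a-1$ gives $k_1k_2\le ab-b$.
\end{itemize}
The missing observation is that $M+M^{\top}=2I$. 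Hence $M-I$ has zero diagonal and $(M-I)(M-I)^{\top}=MM^{\top}-M-M^{\top}+I=(2k+1)I$, so $M-I$ is a $\W(2m,2k+1)$. Take $S$ to be the Paley skew conference matrix of order $n/2$, with $q=n/2-1\in\{7,19,27,31,43,47\}$. This gives the required $\W(n,n-1)$, which is (up to placement of the off-diagonal blocks) the paper's $H'-I$ from Lemma~\ref{lem:double}.

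Your sketch of why these pairs escape the array also lacks its key ingredient. Parity of four squares alone does not exclude $k=4m-1$, e.g.\ $15=3^2+2^2+1^2+1^2$. What excludes it is $w_i\le m$. Together with $\sum w_i=4m-1$, this forces three $w_i=m$ and one $w_i=m-1$. For even $m$ exactly one $s_i$ is then odd, so $\sum s_i^2\equiv 1\not\equiv 3\pmod 4$ (Proposition~\ref{prop:parity}).
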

The proof (Section~\ref{sec:verify}) consists of $1512$ explicit matrices, one for each pair, together with an exact check that each satisfies the definition. The algorithm of Section~\ref{sec:algo} is the tool that \emph{discovered} these matrices; their validity does not depend on it. The paper is about \emph{one specific catalogue}: for each $(n,k)\in\mathcal S$ the repository stores exactly one matrix and the name of the method that produced it. A pair $(n,k)$ usually admits many other weighing matrices, equivalent or not; they play no role here. Whenever the text says ``the program does X'', this describes the code of the repository, and every matrix drawn in a figure as a $\W(n,k)$ is the stored matrix of that pair (or, in one explicitly labelled case, a re-run of the same code).

\paragraph{How to read the pictures.}
In every matrix picture a \blue{blue} square is the entry $+1$, a \red{red} square is $-1$ and a white square is $0$. Thick lines separate blocks. Each matrix labelled $\W(n,k)$ was checked to satisfy $WW\T=kI_n$ with exact integer arithmetic before being drawn. Auxiliary matrices (such as $Q$ in Section~\ref{sec:paley}, or single circulants in Section~\ref{sec:gs}) are drawn to explain the constructions and are not weighing matrices.

\paragraph{Plan.}
Section~\ref{sec:tools} fixes notation and proves two elementary rules about block matrices. Section~\ref{sec:gs} explains the Goethals--Seidel construction in detail. Section~\ref{sec:algo} describes the algorithm and every method it uses, each with a proof and an example. Section~\ref{sec:verify} explains the exact verification, Section~\ref{sec:results} gives the results and Section~\ref{sec:availability} the availability of code and data.

\section{Notation and elementary tools}\label{sec:tools}

\subsection{Notation}
Rows and columns of an $n\times n$ matrix are numbered $0,1,\dots,n-1$ and $X_{ij}$ is the entry in row $i$ and column $j$. We write $X\T$ for the transpose ($(X\T)_{ij}=X_{ji}$), $I_n$ for the identity matrix, $J$ for a matrix all of whose entries are $1$, $\one$ for a column vector all of whose entries are $1$, and $0$ for a zero matrix or vector whose size is clear from the context. Table~\ref{tab:notation} lists the symbols used later; each is also defined where it first appears.

\begin{table}[ht]
\centering\small
\begin{tabular}{@{}p{2.2cm}p{12.6cm}@{}}
\toprule
$\W(n,k)$ & weighing matrix of order $n$ and weight $k$ (the W is set in upright type, not italic)\\
$H$, $H_2$ & always a \emph{Hadamard} matrix ($H_2$ is the $2\times2$ matrix defined in the Introduction, Section~\ref{sec:intro})\\
$\diag(X,Y)$, $X\oplus Y$ & block diagonal matrix (direct sum), Section~\ref{sec:blocks}\\
$X\otimes Y$ & Kronecker product, Section~\ref{sec:blocks}\\
$A,B,C,D$ & four \emph{sequences} of length $m$ with entries in $\{-1,0,1\}$ (Section~\ref{sec:gs}); capital letters are used for sequences only\\
$a,b,c,d$ & the $m\times m$ circulant \emph{matrices} built from $A,B,C,D$\\
$\circm(x)$, $Z$, $R$ & circulant of a sequence $x$; cyclic shift matrix; reversal matrix\\
$G$ & Goethals--Seidel array ($n=4m$)\\
$\PAF_x(s)$, $P(s)$ & periodic autocorrelation of one sequence; sum over the four sequences\\
$w_i$, $s_i$ & number of nonzero entries, and sum of entries, of the $i$-th sequence\\
$q$, $\chi$, $Q$ & prime power, quadratic character, character matrix (Paley constructions)\\
$S$, $K$ & skew (resp.\ symmetric) conference matrix of Paley type\\
$E$, $N$, $T$ & annealing: cost, number of moves per attempt, temperature\\
\bottomrule
\end{tabular}
\caption{Notation.}\label{tab:notation}
\end{table}

\subsection{Block matrices, direct sums and Kronecker products}\label{sec:blocks}
A matrix can be cut into rectangular \emph{blocks}, which are then treated like entries. Two rules are used throughout. First, the \emph{transpose} of a block matrix is obtained by transposing the arrangement of the blocks \emph{and} each block. Second, the product of two block matrices is computed with the usual row-by-column rule applied to blocks, \emph{provided the block sizes fit and the order of the factors is kept}: if $X=(X_{ij})$ and $Y=(Y_{jl})$ are block matrices, the $(i,l)$ block of $XY$ is $\sum_j X_{ij}Y_{jl}$.

For square matrices $X$ and $Y$ the \emph{direct sum} (block diagonal matrix) is
\[
\diag(X,Y)=X\oplus Y=\begin{pmatrix}X&0\\0&Y\end{pmatrix},
\]
with zero blocks of the appropriate sizes; $\diag(X_1,\dots,X_r)$ is defined in the same way.

\begin{definition}
The \emph{Kronecker product} of an $a\times a$ matrix $X$ and a $b\times b$ matrix $Y$ is the $ab\times ab$ block matrix $X\otimes Y$ whose $(i,j)$ block, of size $b\times b$, is the matrix $X_{ij}\,Y$. In other words, \emph{every entry of $X$ is replaced by a copy of $Y$ multiplied by that entry}.
\end{definition}
For example
\[
H_2\otimes I_2=\begin{pmatrix}I_2&I_2\\I_2&-I_2\end{pmatrix}=\left(\begin{smallmatrix}1&0&1&0\\0&1&0&1\\1&0&-1&0\\0&1&0&-1\end{smallmatrix}\right),\qquad
I_2\otimes H_2=\begin{pmatrix}H_2&0\\0&H_2\end{pmatrix}=\left(\begin{smallmatrix}1&1&0&0\\1&-1&0&0\\0&0&1&1\\0&0&1&-1\end{smallmatrix}\right).
\]
The two products differ: \emph{the order of the two factors matters}. Two consequences of the definition: $I_a\otimes Y=\diag(Y,\dots,Y)$ ($a$ copies), because the $(i,j)$ block is $\delta_{ij}Y$ (with $\delta_{ij}=1$ if $i=j$ and $0$ otherwise); and $I_a\otimes I_b=I_{ab}$.

\begin{lemma}\label{lem:kron}
Let $X,Z$ be $a\times a$ matrices and $Y,U$ be $b\times b$ matrices. Then $(X\otimes Y)(Z\otimes U)=(XZ)\otimes(YU)$ and $(X\otimes Y)\T=X\T\otimes Y\T$.
\end{lemma}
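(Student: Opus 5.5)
Both identities reduce to comparing blocks. We use the block-multiplication rule and the block-transpose rule of Section~\ref{sec:blocks}, together with the definition of $X\otimes Y$ as the $a\times a$ array of $b\times b$ blocks $X_{ij}Y$.

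For the product, we cut $X\otimes Y$ and $Z\otimes U$ into $b\times b$ blocks, so that both are $a\times a$ block matrices with blocks $X_{ij}Y$ and $Z_{jl}U$. All blocks have the same size, so the row-by-column rule applies with the factors kept in order. It gives the $(i,l)$ block of the product as
\[
\sum_{j=0}^{a-1}(X_{ij}Y)(Z_{jl}U)=\sum_{j=0}^{a-1}X_{ij}Z_{jl}\,YU=(XZ)_{il}\,(YU).
\]
Here we used only that the scalars $X_{ij}$ and $Z_{jl}$ commute with matrices and can be pulled out, while $Y$ stays to the left of $U$. The right-hand side is by definition the $(i,l)$ block of $(XZ)\otimes(YU)$, since $XZ$ is $a\times a$ and $YU$ is $b\times b$. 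Two matrices of the same size that agree block by block are equal.

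For the transpose, the first rule of Section~\ref{sec:blocks} says that the $(i,j)$ block of $(X\otimes Y)\T$ is the transpose of the $(j,i)$ block of $X\otimes Y$. That is $(X_{ji}Y)\T=X_{ji}Y\T=(X\T)_{ij}\,Y\T$, which is exactly the $(i,j)$ block of $X\T\otimes Y\T$.

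There is no real obstacle. The only point needing care is in the product identity: the scalars may be moved freely, but $Y$ and $U$ must not be swapped, because matrix multiplication is not commutative. This is why the result is $YU$ and not $UY$.
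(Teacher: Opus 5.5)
Your proposal is correct and follows essentially the same approach as the paper's proof. For the product you use the block row-by-column rule, pulling the scalars $X_{ij}Z_{jl}$ out in front of $YU$. For the transpose you use the block-transpose rule, which gives $(X_{ji}Y)^{\top}=(X^{\top})_{ij}Y^{\top}$.
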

\begin{proof}
By the block rule, the $(i,l)$ block of $(X\otimes Y)(Z\otimes U)$ is
\[
\sum_{j}(X_{ij}Y)(Z_{jl}U)=\Bigl(\sum_j X_{ij}Z_{jl}\Bigr)YU=(XZ)_{il}\,YU,
\]
because the numbers $X_{ij}$ and $Z_{jl}$ can be moved in front of the matrices. This is the $(i,l)$ block of $(XZ)\otimes(YU)$. For the transpose, the $(i,j)$ block of $(X\otimes Y)\T$ is the transpose of the $(j,i)$ block of $X\otimes Y$, namely $(X_{ji}Y)\T=X_{ji}Y\T=(X\T)_{ij}Y\T$, which is the $(i,j)$ block of $X\T\otimes Y\T$.
\end{proof}
\section{The Goethals--Seidel construction}\label{sec:gs}

Most of the catalogue comes from classical constructions (Section~\ref{sec:algo}), but the hardest pairs are solved with a construction of Goethals and Seidel~\cite{gs70} (for its use with ternary sequences to build weighing matrices, see also~\cite{ks93}). It builds a large matrix from \emph{four short sequences}, and turns the question ``is this large matrix a weighing matrix?'' into a question about the sequences alone. This section explains the construction and proves everything it needs.

\subsection{Circulant matrices}\label{sec:circ}

A \emph{sequence of length $m$} is a list $x=(x_0,x_1,\dots,x_{m-1})$ of $m$ numbers; in this note its entries always belong to $\{-1,0,1\}$.

\begin{definition}
The \emph{circulant} of $x$, written $\circm(x)$, is the $m\times m$ matrix whose entry in row $i$ and column $j$ ($0\le i,j\le m-1$) is
\[
\circm(x)_{ij}=x_{(j-i)\bmod m}.
\]
\end{definition}
Here $(j-i)\bmod m$ is the remainder of $j-i$ on division by $m$, a number in $\{0,\dots,m-1\}$; for instance, for $m=4$, $(0-1)\bmod 4=3$ and $(2-3)\bmod 4=3$, while $(3-1)\bmod 4=2$. The index $(j-i)\bmod m$ depends only on $j-i$ (modulo $m$), so it is constant along each diagonal of the matrix, with wrap-around. For $m=4$ the indices and the resulting matrix are
\[
\bigl((j-i)\bmod 4\bigr)_{ij}=\begin{pmatrix}0&1&2&3\\3&0&1&2\\2&3&0&1\\1&2&3&0\end{pmatrix}
\quad\Longrightarrow\quad
\circm(x)=\begin{pmatrix}x_0&x_1&x_2&x_3\\x_3&x_0&x_1&x_2\\x_2&x_3&x_0&x_1\\x_1&x_2&x_3&x_0\end{pmatrix}.
\]
In words: row $0$ is $x$ itself, and each following row is the previous row moved one place to the right, the entry that falls off the right end re-entering at the left (a \emph{cyclic shift}). This is why every row, and every column, contains the same entries as $x$. As an example, for $m=3$ and the sequence $A=(-1,-1,1)$ we obtain
\[
a=\circm(A)=\begin{pmatrix}-1&-1&1\\1&-1&-1\\-1&1&-1\end{pmatrix}
\qquad(\text{drawn in Figure~\ref{fig:circ}}).
\]

Let $Z=\circm(0,1,0,\dots,0)$ be the circulant of the sequence with a single $1$ in position $1$; $Z_{ij}=1$ exactly when $j\equiv i+1\pmod m$. (For $m=1$ everything below is trivial: all matrices are $1\times1$.)

\begin{lemma}\label{lem:circ}
Let $m\ge2$.
\begin{enumerate}
\item[(i)] For every $t\ge0$, $(Z^t)_{ij}=1$ if $j\equiv i+t\pmod m$ and $0$ otherwise. In particular $Z^m=I_m$ and $Z\T=Z^{m-1}$.
\item[(ii)] $\circm(x)=\sum_{t=0}^{m-1}x_tZ^t$.
\item[(iii)] The transpose of a circulant is a circulant, a product of two circulants is a circulant, and any two circulants commute: $XY=YX$. Consequently every circulant $X$ commutes with every circulant $Y$ and with $Y\T$, and $XX\T=X\T X$.
\end{enumerate}
\end{lemma}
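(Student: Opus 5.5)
The plan is to prove the three parts in order, each resting on the previous one, working entry by entry with indices taken modulo $m$.

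For (i) I will induct on $t$. The case $t=0$ is $Z^0=I_m$, whose $(i,j)$ entry is $1$ exactly when $j\equiv i$. For the inductive step I write
\[
(Z^{t+1})_{ij}=\sum_l (Z^t)_{il}Z_{lj}.
\]
By the inductive hypothesis, the only $l$ with $(Z^t)_{il}\neq0$ is $l\equiv i+t$. The term $Z_{lj}$ is then $1$ exactly when $j\equiv l+1\equiv i+t+1$, which completes the induction. Taking $t=m$ gives $j\equiv i$, so $Z^m=I_m$. For the transpose, $(Z\T)_{ij}=Z_{ji}=1$ iff $i\equiv j+1$, i.e.\ iff $j\equiv i+(m-1)$. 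By the formula in (i) this is exactly the pattern of $Z^{m-1}$.

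For (ii) I compare entries. By (i), the $(i,j)$ entry of $\sum_t x_tZ^t$ equals $x_t$ for the unique $t\in\{0,\dots,m-1\}$ with $t\equiv j-i\pmod m$. That $t$ is $(j-i)\bmod m$, so the entry is $\circm(x)_{ij}$.

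For (iii), (ii) shows that the circulants are exactly the polynomials in $Z$ of degree below $m$. Since $Z^m=I_m$, any polynomial in $Z$ reduces, by taking exponents mod $m$, to such a polynomial and hence is a circulant. From this the claims follow in turn:
\begin{itemize}
\item The transpose is $\sum_t x_t (Z\T)^t=\sum_t x_t Z^{(m-1)t}$, a polynomial in $Z$, hence a circulant.
\item A product of two polynomials in $Z$ is a polynomial in $Z$, hence a circulant.
\item Two polynomials in the same matrix $Z$ commute, since powers of $Z$ commute with each other.
\end{itemize}
The "consequently" clause then needs no further work: $Y\T$ is a circulant, so $X$ commutes with it, and taking $Y=X$ gives $XX\T=X\T X$.

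I expect no real obstacle here. The only point needing care is the modular bookkeeping in (i), namely choosing the representative of exponents modulo $m$ and using $m\ge2$ so that $Z$ is well defined as stated. Once (i) holds, the rest is formal.
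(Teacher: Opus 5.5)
Your proposal is correct and follows the paper's proof essentially step for step: induction on $t$ for (i), an entrywise comparison using (i) for (ii), and for (iii) the identification of circulants with polynomials in $Z$, reduced via $Z^m=I_m$ and $Z^{\top}=Z^{m-1}$. The consequences that $X$ commutes with $Y^{\top}$ and that $XX^{\top}=X^{\top}X$ are then obtained exactly as in the paper.
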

\begin{proof}
(i) By induction on $t$; $Z^0=I_m$ satisfies the claim. If it holds for $t$ then $(Z^{t+1})_{ij}=\sum_l(Z^t)_{il}Z_{lj}$; the only nonzero term has $l\equiv i+t$ and requires $j\equiv l+1\equiv i+t+1$, in which case it equals $1$. For $t=m$ we get $j\equiv i$, i.e.\ $Z^m=I_m$, and $(Z^{m-1})_{ij}=1$ iff $j\equiv i-1$ iff $i\equiv j+1$ iff $(Z\T)_{ij}=Z_{ji}=1$.

(ii) The $(i,j)$ entry of $\sum_t x_tZ^t$ is $x_t$ for the unique $t\in\{0,\dots,m-1\}$ with $t\equiv j-i$, that is, $x_{(j-i)\bmod m}$.

(iii) By (ii) a circulant is a polynomial in $Z$. Since $Z^tZ^u=Z^{t+u}=Z^uZ^t$, any two polynomials in $Z$ commute. A product of polynomials in $Z$ is again a polynomial in $Z$, and, because $Z^m=I_m$ (so $Z^r=Z^{r\bmod m}$), every polynomial in $Z$ equals $\sum_{t=0}^{m-1}c_tZ^t=\circm(c_0,\dots,c_{m-1})$ for suitable numbers $c_t$. Finally $X\T=\sum_tx_t(Z\T)^t=\sum_tx_tZ^{(m-1)t}$ is again a polynomial in $Z$. All remaining claims follow.
\end{proof}

\subsection{The reversal matrix $R$}\label{sec:R}

\begin{definition}
The \emph{reversal matrix} $R$ is the $m\times m$ matrix with $R_{ij}=1$ if $i+j=m-1$ and $R_{ij}=0$ otherwise (ones on the anti-diagonal).
\end{definition}
For $m=4$,
\[
R=\begin{pmatrix}0&0&0&1\\0&0&1&0\\0&1&0&0\\1&0&0&0\end{pmatrix}.
\]
Multiplying a column vector by $R$ reverses the order of its entries ($(Rv)_i=v_{m-1-i}$); $XR$ reverses the order of the \emph{columns} of $X$ and $RX$ the order of its \emph{rows}.

\begin{lemma}\label{lem:R}
(i) $R\T=R$. (ii) $R^2=I_m$. (iii) For every circulant $X$, $XR=RX\T$ (hence also $RX=X\T R$, by applying (iii) to the circulant $X\T$).
\end{lemma}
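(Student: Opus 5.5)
Parts (i) and (ii) follow directly from the definition, by entrywise computation. For (i), $(R\T)_{ij}=R_{ji}$ equals $1$ exactly when $j+i=m-1$, which is the same condition as for $R_{ij}$; hence $R\T=R$. For (ii), $(R^2)_{ij}=\sum_l R_{il}R_{lj}$. The only possibly nonzero term has $l=m-1-i$, and it then requires $l+j=m-1$, that is $j=i$. So $R^2=I_m$.

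For (iii) I would avoid working with a general circulant and reduce to the single matrix $Z$. By Lemma~\ref{lem:circ}(ii) every circulant is a linear combination $X=\sum_t x_tZ^t$. By Lemma~\ref{lem:circ}(i) its transpose is $X\T=\sum_t x_t(Z\T)^t$. Both sides of $XR=RX\T$ are linear in the coefficients $x_t$, so it suffices to show
\[
Z^tR=R(Z\T)^t\qquad\text{for every }t\ge0 .
\]
This in turn follows by induction on $t$ from the case $t=1$, namely $ZR=RZ\T$. Indeed, if $Z^tR=R(Z\T)^t$, then
\[
Z^{t+1}R=Z(Z^tR)=ZR(Z\T)^t=RZ\T(Z\T)^t=R(Z\T)^{t+1}.
\]
The base case $t=0$ is trivial.

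The one real computation is the check $ZR=RZ\T$, which I would do entrywise. We have $(ZR)_{ij}=\sum_l Z_{il}R_{lj}$. The only candidate index is $l\equiv i+1$, and the term is nonzero when $l+j=m-1$. Reducing mod $m$, this says $(ZR)_{ij}=1$ iff $i+j\equiv m-2\pmod m$. Similarly, $(RZ\T)_{ij}=\sum_l R_{il}Z_{jl}$ has its only candidate at $l=m-1-i$, and it is nonzero when $l\equiv j+1$. That is again the condition $i+j\equiv m-2\pmod m$.

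The main obstacle is bookkeeping rather than ideas. The indices of $R$ are exact, since $l$ ranges over $\{0,\dots,m-1\}$ and $l+j=m-1$ holds as integers, while the indices of $Z$ are taken modulo $m$. One must check that reducing the exact equation mod $m$ loses nothing. It does not: for given $i,j$ there is exactly one $l$ in range with $l\equiv i+1$, and for that $l$ the congruence $l+j\equiv m-1$ is equivalent to the equality $l+j=m-1$, because $0\le l+j\le 2m-2$. The wrap-around case ($i=m-1$, $j=m-1$) is covered by the same argument.

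Finally, the parenthetical claim $RX=X\T R$ follows in two ways. One can apply (iii) to the circulant $X\T$, which is a circulant by Lemma~\ref{lem:circ}(iii), and use $(X\T)\T=X$. Alternatively, one can take the transpose of $XR=RX\T$ and use (i).
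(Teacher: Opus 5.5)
Your proof is correct. Parts (i) and (ii) are the paper's argument. For (iii) you take a different route.

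The paper does one entrywise computation for a general circulant $X=\circm(x)$. Since $R$ has a single $1$ in each row and column, $(XR)_{ij}=X_{i,\,m-1-j}=x_{(m-1-i-j)\bmod m}$ and $(RX\T)_{ij}=X_{j,\,m-1-i}=x_{(m-1-i-j)\bmod m}$, and these agree.

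You instead use linearity in the $x_t$ and induction on powers to reduce to the generator, and check only $ZR=RZ\T$ entrywise.

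The paper's version is shorter and needs neither Lemma~\ref{lem:circ} nor an induction. The mod-$m$ bookkeeping you treat separately is absorbed into the single index $(m-1-i-j)\bmod m$. It also covers $m=1$ with no comment, whereas your reduction uses Lemma~\ref{lem:circ}, stated for $m\ge2$, so $m=1$ would have to be noted as trivial.

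Your version isolates the structural fact behind the lemma. Multiplying $ZR=RZ\T$ by $R$ and using (ii) gives $RZR=Z\T=Z^{-1}$: conjugation by $R$ inverts the cyclic shift (the dihedral relation). Since conjugation is multiplicative and linear, every polynomial in $Z$ is sent to the same polynomial in $Z\T$, which is exactly (iii).

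One correction: your second justification of $RX=X\T R$ does not work. Transposing $XR=RX\T$ gives $(XR)\T=R\T X\T=RX\T$ on the left and $(RX\T)\T=XR\T=XR$ on the right. So you recover $RX\T=XR$, the same identity again. To get $RX=X\T R$ without passing to the circulant $X\T$, multiply $XR=RX\T$ on the left by $R$ and use (ii) to get $RXR=X\T$. Then multiply on the right by $R$ to get $RX=X\T R$. Your first justification, applying (iii) to $X\T$, is fine and is the one the paper uses.
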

\begin{proof}
(i) The condition $i+j=m-1$ is symmetric in $i$ and $j$.

(ii) $(R^2)_{ij}=\sum_lR_{il}R_{lj}$. In row $i$ of $R$ the only nonzero entry is $R_{i,\,m-1-i}=1$, so the sum has the single term $l=m-1-i$, equal to $R_{m-1-i,\,j}$, which is $1$ exactly when $(m-1-i)+j=m-1$, i.e.\ $j=i$. Hence $(R^2)_{ij}=1$ if $i=j$ and $0$ otherwise.

(iii) Let $X=\circm(x)$. Since $R_{lj}=1$ only for $l=m-1-j$, $(XR)_{ij}=X_{i,\,m-1-j}=x_{(m-1-j-i)\bmod m}$. Since $R_{il}=1$ only for $l=m-1-i$, $(RX\T)_{ij}=(X\T)_{m-1-i,\,j}=X_{j,\,m-1-i}=x_{(m-1-i-j)\bmod m}$. The two entries coincide.
\end{proof}
Here is (iii) for $m=4$: reversing the columns of $X=\circm(x)$ or reversing the rows of $X\T$ gives the same matrix,
\[
XR=\begin{pmatrix}x_3&x_2&x_1&x_0\\x_2&x_1&x_0&x_3\\x_1&x_0&x_3&x_2\\x_0&x_3&x_2&x_1\end{pmatrix}=RX\T,\qquad
X\T=\begin{pmatrix}x_0&x_3&x_2&x_1\\x_1&x_0&x_3&x_2\\x_2&x_1&x_0&x_3\\x_3&x_2&x_1&x_0\end{pmatrix}.
\]

\subsection{The Goethals--Seidel array}\label{sec:array}

Let $A,B,C,D$ be four sequences of length $m$ and let $a=\circm(A)$, $b=\circm(B)$, $c=\circm(C)$, $d=\circm(D)$ be their circulants. \emph{Capital letters are always sequences (lists of $m$ numbers); lower-case letters are always the corresponding $m\times m$ circulant matrices.} The sequences are the unknowns of the problem. The \emph{Goethals--Seidel array} is the $n\times n$ matrix, with $n=4m$, cut into sixteen blocks of size $m\times m$,
\begin{equation}\label{eq:G}
G=\begin{pmatrix}
a & bR & cR & dR\\
-bR & a & d\T R & -c\T R\\
-cR & -d\T R & a & b\T R\\
-dR & c\T R & -b\T R & a
\end{pmatrix}.
\end{equation}
Each of the sixteen blocks is $\pm$ a circulant, possibly transposed and/or with columns reversed (a block is the zero matrix exactly when the corresponding sequence consists of zeros only, which does happen for some matrices of the catalogue), so in every case every entry of $G$ lies in $\{-1,0,1\}$. The only question is whether $GG\T=kI_n$ for some $k$.

\begin{center}
\begin{tikzpicture}[x=1.7cm,y=-1.0cm,font=\small]
\node[draw=gray,fill=gray!12,minimum width=1.7cm,minimum height=1.0cm] at (0.5,0.5) {\color{black}$a$};
\node[draw=gray,fill=white,minimum width=1.7cm,minimum height=1.0cm] at (1.5,0.5) {\color{black}$bR$};
\node[draw=gray,fill=white,minimum width=1.7cm,minimum height=1.0cm] at (2.5,0.5) {\color{black}$cR$};
\node[draw=gray,fill=white,minimum width=1.7cm,minimum height=1.0cm] at (3.5,0.5) {\color{black}$dR$};
\node[draw=gray,fill=white,minimum width=1.7cm,minimum height=1.0cm] at (0.5,1.5) {\color{wminus}$-bR$};
\node[draw=gray,fill=gray!12,minimum width=1.7cm,minimum height=1.0cm] at (1.5,1.5) {\color{black}$a$};
\node[draw=gray,fill=white,minimum width=1.7cm,minimum height=1.0cm] at (2.5,1.5) {\color{black}$d^{\top}R$};
\node[draw=gray,fill=white,minimum width=1.7cm,minimum height=1.0cm] at (3.5,1.5) {\color{wminus}$-c^{\top}R$};
\node[draw=gray,fill=white,minimum width=1.7cm,minimum height=1.0cm] at (0.5,2.5) {\color{wminus}$-cR$};
\node[draw=gray,fill=white,minimum width=1.7cm,minimum height=1.0cm] at (1.5,2.5) {\color{wminus}$-d^{\top}R$};
\node[draw=gray,fill=gray!12,minimum width=1.7cm,minimum height=1.0cm] at (2.5,2.5) {\color{black}$a$};
\node[draw=gray,fill=white,minimum width=1.7cm,minimum height=1.0cm] at (3.5,2.5) {\color{black}$b^{\top}R$};
\node[draw=gray,fill=white,minimum width=1.7cm,minimum height=1.0cm] at (0.5,3.5) {\color{wminus}$-dR$};
\node[draw=gray,fill=white,minimum width=1.7cm,minimum height=1.0cm] at (1.5,3.5) {\color{black}$c^{\top}R$};
\node[draw=gray,fill=white,minimum width=1.7cm,minimum height=1.0cm] at (2.5,3.5) {\color{wminus}$-b^{\top}R$};
\node[draw=gray,fill=gray!12,minimum width=1.7cm,minimum height=1.0cm] at (3.5,3.5) {\color{black}$a$};
\draw[line width=0.8pt] (0,0) rectangle (4,4);
\node[font=\footnotesize,anchor=west] at (4.15,0.5) {block-row 1};\node[font=\footnotesize,anchor=west] at (4.15,1.5) {block-row 2};\node[font=\footnotesize,anchor=west] at (4.15,2.5) {block-row 3};\node[font=\footnotesize,anchor=west] at (4.15,3.5) {block-row 4};
\end{tikzpicture}\hspace{1.2cm}\begin{tikzpicture}[x=0.8cm,y=-0.8cm,font=\small]
\node[draw=gray,fill=gray!12,minimum width=0.8cm,minimum height=0.8cm] at (0.5,0.5) {$\Gamma$};
\node[draw=gray,fill=white,minimum width=0.8cm,minimum height=0.8cm] at (1.5,0.5) {$0$};
\node[draw=gray,fill=white,minimum width=0.8cm,minimum height=0.8cm] at (2.5,0.5) {$0$};
\node[draw=gray,fill=white,minimum width=0.8cm,minimum height=0.8cm] at (3.5,0.5) {$0$};
\node[draw=gray,fill=white,minimum width=0.8cm,minimum height=0.8cm] at (0.5,1.5) {$0$};
\node[draw=gray,fill=gray!12,minimum width=0.8cm,minimum height=0.8cm] at (1.5,1.5) {$\Gamma$};
\node[draw=gray,fill=white,minimum width=0.8cm,minimum height=0.8cm] at (2.5,1.5) {$0$};
\node[draw=gray,fill=white,minimum width=0.8cm,minimum height=0.8cm] at (3.5,1.5) {$0$};
\node[draw=gray,fill=white,minimum width=0.8cm,minimum height=0.8cm] at (0.5,2.5) {$0$};
\node[draw=gray,fill=white,minimum width=0.8cm,minimum height=0.8cm] at (1.5,2.5) {$0$};
\node[draw=gray,fill=gray!12,minimum width=0.8cm,minimum height=0.8cm] at (2.5,2.5) {$\Gamma$};
\node[draw=gray,fill=white,minimum width=0.8cm,minimum height=0.8cm] at (3.5,2.5) {$0$};
\node[draw=gray,fill=white,minimum width=0.8cm,minimum height=0.8cm] at (0.5,3.5) {$0$};
\node[draw=gray,fill=white,minimum width=0.8cm,minimum height=0.8cm] at (1.5,3.5) {$0$};
\node[draw=gray,fill=white,minimum width=0.8cm,minimum height=0.8cm] at (2.5,3.5) {$0$};
\node[draw=gray,fill=gray!12,minimum width=0.8cm,minimum height=0.8cm] at (3.5,3.5) {$\Gamma$};
\draw[line width=0.8pt] (0,0) rectangle (4,4);\node[font=\footnotesize] at (2,4.5) {$GG^{\top}=I_4\otimes\Gamma$};
\end{tikzpicture}
\end{center}

\subsection{The key identity}\label{sec:key}

\begin{theorem}[Goethals--Seidel]\label{thm:gs}
Let $\Gamma=aa\T+bb\T+cc\T+dd\T$. Then
\[
GG\T=I_4\otimes\Gamma=\begin{pmatrix}\Gamma&0&0&0\\0&\Gamma&0&0\\0&0&\Gamma&0\\0&0&0&\Gamma\end{pmatrix}.
\]
\end{theorem}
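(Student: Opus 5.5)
We compute $GG\T$ block by block, using the block rules of Section~\ref{sec:blocks}: the $(i,l)$ block of $GG\T$ is $\sum_j G_{ij}(G_{lj})\T$. First we record how transposition acts on each type of block appearing in~\eqref{eq:G}. By Lemma~\ref{lem:R}(i), $(xR)\T=R\T x\T=Rx\T$. Then Lemma~\ref{lem:R}(iii), applied to the circulant $x\T$, gives $Rx\T=xR$. So $(xR)\T=xR$ for every circulant $x$: a circulant with reversed columns is symmetric. The same applies to $x\T R$, since $x\T$ is also a circulant by Lemma~\ref{lem:circ}(iii). Thus every off-diagonal block of $G$ is symmetric, and the transposes needed in $GG\T$ are simply the blocks themselves.

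Next we reduce every product of two such blocks to circulant products. For circulants $x,y$ we have $(xR)(yR)\T=xRyR=x(Ry)R$. Lemma~\ref{lem:R}(iii) gives $Ry=y\T R$, and Lemma~\ref{lem:R}(ii) gives $R^2=I_m$, so this equals $xy\T$. In the same way $(x\T R)(y\T R)=x\T y$ and $(xR)(y\T R)=xy$. For a diagonal block $a$ next to a reversed block we have $a(yR)\T=a\,yR$ and $(yR)a\T=yRa\T=y\,aR$, using $Ra\T=aR$. Since circulants commute (Lemma~\ref{lem:circ}(iii)), $ayR=yaR$. Hence the cross terms between $a$ and a reversed block differ only by the order of two commuting circulants.

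With these rules, each diagonal block $(i,i)$ is a sum of four terms of the form $xx\T$ or $x\T x$. By Lemma~\ref{lem:circ}(iii), $x\T x=xx\T$, so each diagonal block equals $\Gamma$. For an off-diagonal block, for instance $(1,2)$, we get
\[
a(-bR)\T+(bR)a\T+(cR)(d\T R)\T+(dR)(-c\T R)\T .
\]
By the rules above this becomes $-abR+baR+cdR-dcR$, which vanishes because circulants commute. The remaining five blocks above the diagonal, $(1,3),(1,4),(2,3),(2,4),(3,4)$, are handled by the same bookkeeping: each term becomes $\pm$ a product of two circulants, possibly transposed, possibly times $R$, and the terms cancel in pairs. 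The blocks below the diagonal follow without further work, since $GG\T$ is symmetric.

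The main obstacle is the sign and transpose bookkeeping in blocks such as $(2,3)$ and $(2,4)$, where both factors carry transposes. Take $(2,3)$: it is $(-bR)(-cR)+a(-d\T R)+(d\T R)a+(-c\T R)(b\T R)$. These become $bc\T$, $-ad\T R$, $d\T aR$ and $-c\T b$. The first and last cancel because $c\T$ and $b$ commute. The middle two cancel because $a$ and $d\T$ commute. I will tabulate all six off-diagonal blocks this way, checking in each that the terms cancel in pairs.
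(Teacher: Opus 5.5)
Your proposal is correct and is essentially the paper's proof. It is the same block-by-block computation of $GG\T$ from $R\T=R$, $R^2=I_m$, $XR=RX\T$ and the commutativity of circulants; your remark that every reversed block $xR$ or $x\T R$ is symmetric just combines the paper's two steps $(XR)\T=RX\T$ and $RY\T=YR$ into one. Two bookkeeping slips should be fixed, though neither affects the cancellation. In block $(1,2)$ your own rule $(xR)(y\T R)=xy$ gives $cd-dc$, not $cdR-dcR$ (this matches the paper). In block $(2,3)$ the third product is $(d\T R)a\T$, not $(d\T R)a$: your value $d\T aR$ is correct for $(d\T R)a\T$, whereas $(d\T R)a$ would give $d\T a\T R$.
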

(The matrix $I_4$ appears because $G$ is cut into $4\times4$ blocks: by the definition of $\otimes$, $I_4\otimes\Gamma$ has the block $\Gamma$ in the four diagonal positions and $0$ elsewhere.)

\begin{proof}
We use four rules: (R1) $(XR)\T=R\T X\T=RX\T$ (Lemma~\ref{lem:R}(i)); (R2) $RR=I_m$; (R3) $RY\T=YR$ and $RY=Y\T R$ for every circulant $Y$ (Lemma~\ref{lem:R}(iii)); (R4) the circulants $a,b,c,d$ and their transposes commute with each other (Lemma~\ref{lem:circ}).

The block in block-row $r$ and block-column $s$ of $GG\T$ is $\sum_{c=1}^4G_{rc}G_{sc}\T$, where $G_{rc}$ denotes the block of $G$ in block-row $r$ and block-column $c$ (``block-row $r$ times the transpose of block-row $s$''). This uses the rule of Section~\ref{sec:blocks}: the transpose of $G$ has in position $(c,s)$ the block $G_{sc}\T$; that is, transposing a block matrix swaps the positions of the blocks \emph{and also} transposes each block.

\emph{Block $(1,2)$.} The four products are
\begin{align*}
a(-bR)\T&=-aRb\T=-abR &&\text{by (R1), then (R3): }Rb\T=bR,\\
(bR)a\T&=bRa\T=baR &&\text{by (R3): }Ra\T=aR,\\
(cR)(d\T R)\T&=cR\,Rd=cd &&\text{by (R1) with }(d\T)\T=d,\ \text{then (R2)},\\
(dR)(-c\T R)\T&=-dR\,Rc=-dc &&\text{in the same way.}
\end{align*}
Their sum is $-abR+baR+cd-dc=0$, because $ab=ba$ and $cd=dc$ by (R4). The other five blocks above the diagonal are treated identically; Table~\ref{tab:offdiag} lists the four products of each, and their simplified sum, which vanishes by (R4) each time. Since $GG\T$ is symmetric, the blocks below the diagonal are the transposes of those above, hence $0$.

\begin{table}[ht]
\centering\small
\renewcommand{\arraystretch}{1.25}
\begin{tabular}{@{}c>{\raggedright\arraybackslash}p{9.2cm}p{4.2cm}@{}}
\toprule
block & the four products $G_{rc}G_{sc}\T$, $c=1,2,3,4$ & sum after (R1)--(R3)\\
\midrule
$(1,2)$ & $a(-bR)\T,\ (bR)a\T,\ (cR)(d\T R)\T,\ (dR)(-c\T R)\T$ & $(ba-ab)R+(cd-dc)$\\
$(1,3)$ & $a(-cR)\T,\ (bR)(-d\T R)\T,\ (cR)a\T,\ (dR)(b\T R)\T$ & $(ca-ac)R+(db-bd)$\\
$(1,4)$ & $a(-dR)\T,\ (bR)(c\T R)\T,\ (cR)(-b\T R)\T,\ (dR)a\T$ & $(da-ad)R+(bc-cb)$\\
$(2,3)$ & $(-bR)(-cR)\T,\ a(-d\T R)\T,\ (d\T R)a\T,\ (-c\T R)(b\T R)\T$ & $(bc\T-c\T b)+(d\T a-ad\T)R$\\
$(2,4)$ & $(-bR)(-dR)\T,\ a(c\T R)\T,\ (d\T R)(-b\T R)\T,\ (-c\T R)a\T$ & $(bd\T-d\T b)+(ac\T-c\T a)R$\\
$(3,4)$ & $(-cR)(-dR)\T,\ (-d\T R)(c\T R)\T,\ a(-b\T R)\T,\ (b\T R)a\T$ & $(cd\T-d\T c)+(b\T a-ab\T)R$\\
\bottomrule
\end{tabular}
\caption{Off-diagonal blocks of $GG\T$. Every bracket is of the form $XY-YX$ with $X,Y$ circulants or transposed circulants, hence $0$.}\label{tab:offdiag}
\end{table}

\emph{Diagonal blocks.} Using (R1), (R2) and $(Y\T)\T=Y$:
\begin{align*}
(1,1):\quad& aa\T+(bR)(bR)\T+(cR)(cR)\T+(dR)(dR)\T\\
&=aa\T+bRRb\T+cRRc\T+dRRd\T=aa\T+bb\T+cc\T+dd\T=\Gamma,\\
(2,2):\quad& (-bR)(-bR)\T+aa\T+(d\T R)(d\T R)\T+(-c\T R)(-c\T R)\T\\
&=bb\T+aa\T+d\T d+c\T c=\Gamma,\\
(3,3):\quad& (-cR)(-cR)\T+(-d\T R)(-d\T R)\T+aa\T+(b\T R)(b\T R)\T=cc\T+d\T d+aa\T+b\T b=\Gamma,\\
(4,4):\quad& (-dR)(-dR)\T+(c\T R)(c\T R)\T+(-b\T R)(-b\T R)\T+aa\T=dd\T+c\T c+b\T b+aa\T=\Gamma,
\end{align*}
where in the last three lines $d\T d=dd\T$, $c\T c=cc\T$, $b\T b=bb\T$ because a circulant commutes with its transpose (Lemma~\ref{lem:circ}(iii)).
\end{proof}

\subsection{Periodic autocorrelation}\label{sec:paf}

Theorem~\ref{thm:gs} says that everything is decided by the $m\times m$ matrix $\Gamma$. It is described by numbers that measure how much a sequence resembles its own cyclic shifts.

\begin{definition}
The \emph{periodic autocorrelation function} (PAF) of a sequence $x$ of length $m$ is
\[
\PAF_x(s)=\sum_{t=0}^{m-1}x_t\,x_{(t+s)\bmod m},\qquad s=0,1,\dots,m-1,
\]
and for four sequences we put $P(s)=\PAF_A(s)+\PAF_B(s)+\PAF_C(s)+\PAF_D(s)$.
\end{definition}
In words, $\PAF_x(s)$ is the dot product of $x$ with $x$ cyclically shifted by $s$ places. For $x=A=(-1,-1,1)$:
\[
\begin{array}{r@{\;}c@{\;}c@{\;}c@{\quad}l}
x&=(-1,&-1,&\phantom{-}1)&\\
\text{shift by }1:&(-1,&\phantom{-}1,&-1)&\PAF_A(1)=(-1)(-1)+(-1)(1)+(1)(-1)=-1,\\
\text{shift by }2:&(\phantom{-}1,&-1,&-1)&\PAF_A(2)=(-1)(1)+(-1)(-1)+(1)(-1)=-1,
\end{array}
\]
and $\PAF_A(0)=1+1+1=3$, the number of nonzero entries. In general $\PAF_x(0)=\sum_tx_t^2$ is the number of nonzero entries of $x$ (because, the entries being $-1$, $0$ or $1$, the square $x_t^2$ is $1$ if $x_t\ne0$ and $0$ if $x_t=0$), and $\PAF_x(s)=\PAF_x(m-s)$ for $1\le s\le m-1$ (replace $t$ by $t-s$ in the sum, with indices taken modulo $m$).

\begin{lemma}\label{lem:gamma}
$\Gamma=\circm\bigl(P(0),P(1),\dots,P(m-1)\bigr)$.
\end{lemma}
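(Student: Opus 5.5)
We prove Lemma~\ref{lem:gamma} by computing the entries of each term $xx\T$ directly and adding them. The plan is to show that for a single sequence $x$ of length $m$ with circulant $X=\circm(x)$, one has $XX\T=\circm\bigl(\PAF_x(0),\dots,\PAF_x(m-1)\bigr)$. Summing this identity over $x\in\{A,B,C,D\}$ then gives the lemma, because the circulant of a sum of sequences is the sum of their circulants. This last fact is immediate from the definition $\circm(x)_{ij}=x_{(j-i)\bmod m}$, or from Lemma~\ref{lem:circ}(ii), since $x\mapsto\sum_t x_tZ^t$ is linear.

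For the single-sequence identity, we compute the $(i,j)$ entry of $XX\T$ as the dot product of row $i$ and row $j$ of $X$:
\[
(XX\T)_{ij}=\sum_{l=0}^{m-1}X_{il}X_{jl}=\sum_{l=0}^{m-1}x_{(l-i)\bmod m}\,x_{(l-j)\bmod m}.
\]
Next we substitute $t=(l-j)\bmod m$. As $l$ runs over $\{0,\dots,m-1\}$, $t$ also runs over all residues exactly once. With this substitution, $(l-i)\bmod m=(t+(j-i))\bmod m$. Writing $s=(j-i)\bmod m$, the sum becomes $\sum_t x_t\,x_{(t+s)\bmod m}=\PAF_x(s)$.

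The resulting value depends only on $(j-i)\bmod m$, and it equals the $s$-th term of the sequence $(\PAF_x(0),\dots,\PAF_x(m-1))$. This is precisely the $(i,j)$ entry of $\circm(\PAF_x(0),\dots,\PAF_x(m-1))$, which proves the single-sequence identity.

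The only delicate step is the bookkeeping in the substitution: the product must be written as $x_t x_{t+s}$ with $s=j-i$, not $i-j$, to match the circulant convention. Even if one lands on $i-j$ instead, the symmetry $\PAF_x(s)=\PAF_x(m-s)$ noted just before the lemma closes the gap. One could alternatively argue via Lemma~\ref{lem:circ}(ii), expanding $XX\T=\sum_{t,u}x_tx_uZ^{t-u}$ and collecting powers of $Z$. The direct entrywise computation is shorter, and it is the one we would write.
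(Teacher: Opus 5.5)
Your proof is correct and is essentially the paper's: the paper computes $(XX\T)_{0j}=\PAF_x(j)$ by the same substitution $t=l-j$ and then sums over the four sequences. The only difference is that the paper first uses Lemma~\ref{lem:circ}(iii) to know that $\Gamma$ is a circulant and then computes only row $0$. Your computation of the general $(i,j)$ entry gives the circulant structure directly, so you do not need that lemma.
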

\begin{proof}
By Lemma~\ref{lem:circ}(iii), $\Gamma$ is a circulant, so it suffices to compute row $0$. For $x$ with circulant $X$, $(XX\T)_{0j}=\sum_lX_{0l}X_{jl}=\sum_l x_l\,x_{(l-j)\bmod m}$. Putting $t=l-j$ this is $\sum_t x_{(t+j)\bmod m}\,x_t=\PAF_x(j)$. Adding the four sequences gives $\Gamma_{0j}=P(j)$.
\end{proof}

\begin{corollary}\label{cor:paf}
For $k\ge 1$: $GG\T=kI_{4m}\iff P(0)=k$ and $P(s)=0$ for $1\le s\le m-1$. In that case $G$ is a weighing matrix $\W(4m,k)$.
\end{corollary}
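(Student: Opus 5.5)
The corollary follows by combining Theorem~\ref{thm:gs} with Lemma~\ref{lem:gamma}. By Theorem~\ref{thm:gs}, $GG\T=I_4\otimes\Gamma$. By Lemma~\ref{lem:gamma}, $\Gamma=\circm\bigl(P(0),\dots,P(m-1)\bigr)$. It therefore suffices to show that $I_4\otimes\Gamma=kI_{4m}$ holds exactly when $\Gamma=kI_m$, and that this in turn holds exactly when $P(0)=k$ and $P(s)=0$ for $1\le s\le m-1$.

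For the first equivalence, write $kI_{4m}=kI_4\otimes I_m=I_4\otimes kI_m$, using $I_a\otimes I_b=I_{ab}$. Two block matrices are equal exactly when their corresponding blocks are equal. Comparing the diagonal blocks of $I_4\otimes\Gamma$ and $I_4\otimes kI_m$ gives $\Gamma=kI_m$. Conversely, if $\Gamma=kI_m$, then every block agrees, since the off-diagonal blocks of both matrices are $0$.

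For the second equivalence, a circulant $\circm(x)$ equals $kI_m=\circm(k,0,\dots,0)$ exactly when their row $0$ agree, because row $0$ determines the whole circulant through $\circm(x)_{ij}=x_{(j-i)\bmod m}$. Row $0$ of $\Gamma$ is $(P(0),P(1),\dots,P(m-1))$. So $\Gamma=kI_m$ if and only if $P(0)=k$ and $P(s)=0$ for all $s\ne0$. (For $m=1$ the range $1\le s\le m-1$ is empty and the statement reduces to $\Gamma=P(0)$, which is immediate.)

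Finally, when these conditions hold, $G$ is a $4m\times 4m$ matrix with entries in $\{-1,0,1\}$, as observed after \eqref{eq:G}, and it satisfies $GG\T=kI_{4m}$. This is exactly the definition of a $\W(4m,k)$.

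There is no real obstacle here. The only point needing care is the bookkeeping identity $kI_{4m}=I_4\otimes kI_m$ together with the blockwise comparison of the two matrices.
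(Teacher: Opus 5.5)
Your proposal is correct and follows the paper's own proof: Theorem~\ref{thm:gs} reduces $GG\T=kI_{4m}$ to $\Gamma=kI_m$, and Lemma~\ref{lem:gamma} reduces that to comparing row $0$ of the circulant $\Gamma$, namely $(P(0),\dots,P(m-1))$, with $(k,0,\dots,0)$. You also spell out the blockwise comparison and the $m=1$ case, which the paper leaves implicit.
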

\begin{proof}
By Theorem~\ref{thm:gs}, $GG\T=kI_{4m}$ iff $\Gamma=kI_m$, and by Lemma~\ref{lem:gamma} the matrix $\Gamma$ equals $kI_m$ iff its first row $(P(0),\dots,P(m-1))$ equals $(k,0,\dots,0)$.
\end{proof}
This is the point of the construction: instead of testing the orthogonality of the rows of an $n\times n$ matrix, one only has to make the $m-1$ numbers $P(1),\dots,P(m-1)$ vanish, and to have $P(0)=k$.

\subsection{A worked example: $\W(12,7)$}\label{sec:ex127}

Take $m=3$ and the four sequences stored in the catalogue for $\W(12,7)$ (found by the exhaustive search of Section~\ref{sec:exh}):
\[
A=(-1,-1,1),\quad B=(-1,0,0),\quad C=(-1,-1,0),\quad D=(-1,0,0).
\]
Their circulants are drawn in Figure~\ref{fig:circ}. Their PAFs are
\begin{center}
\begin{tabular}{lccc}
\toprule
 & $s=0$ & $s=1$ & $s=2$ \\
\midrule
$\PAF_A$ & $3$ & $-1$ & $-1$ \\
$\PAF_B$ & $1$ & $0$ & $0$ \\
$\PAF_C$ & $2$ & $1$ & $1$ \\
$\PAF_D$ & $1$ & $0$ & $0$ \\
\midrule
$P$ & $7$ & $0$ & $0$ \\
\bottomrule
\end{tabular}
\end{center}
so $P(0)=7$ and $P(1)=P(2)=0$: by Corollary~\ref{cor:paf}, $G$ is a $\W(12,7)$. We can also see it directly: by Lemma~\ref{lem:gamma}, $\Gamma=\circm(7,0,0)=7I_3$, since
\[
\underbrace{\left(\begin{smallmatrix}3&-1&-1\\-1&3&-1\\-1&-1&3\end{smallmatrix}\right)}_{aa\T}+\underbrace{\left(\begin{smallmatrix}1&0&0\\0&1&0\\0&0&1\end{smallmatrix}\right)}_{bb\T}+\underbrace{\left(\begin{smallmatrix}2&1&1\\1&2&1\\1&1&2\end{smallmatrix}\right)}_{cc\T}+\underbrace{\left(\begin{smallmatrix}1&0&0\\0&1&0\\0&0&1\end{smallmatrix}\right)}_{dd\T}=\left(\begin{smallmatrix}7&0&0\\0&7&0\\0&0&7\end{smallmatrix}\right)
\]
and $GG\T=I_4\otimes 7I_3=7I_{12}$. Figure~\ref{fig:circ} shows $a,b,c,d$ and the $12\times12$ matrix $G$.

\begin{figure}[ht]
\centering
\begin{tikzpicture}[x=0.400cm,y=0.400cm,baseline=(mid)]
\coordinate (mid) at (1.5,-1.5);
\foreach \c/\r in {2/0,0/1,1/2}{\fill[wplus] (\c,-\r) rectangle ++(1,-1);}
\foreach \c/\r in {0/0,1/0,1/1,2/1,0/2,2/2}{\fill[wminus] (\c,-\r) rectangle ++(1,-1);}
\draw[xstep=1,ystep=1,gray!45,line width=0.12pt] (0,-3) grid (3,0);
\draw[line width=0.6pt] (0,0) rectangle (3,-3);
\node[font=\scriptsize,below,align=center] at (1.5,-3) {$a$};
\end{tikzpicture}
\hspace{0.25cm}
\begin{tikzpicture}[x=0.400cm,y=0.400cm,baseline=(mid)]
\coordinate (mid) at (1.5,-1.5);
\foreach \c/\r in {0/0,1/1,2/2}{\fill[wminus] (\c,-\r) rectangle ++(1,-1);}
\draw[xstep=1,ystep=1,gray!45,line width=0.12pt] (0,-3) grid (3,0);
\draw[line width=0.6pt] (0,0) rectangle (3,-3);
\node[font=\scriptsize,below,align=center] at (1.5,-3) {$b$};
\end{tikzpicture}
\hspace{0.25cm}
\begin{tikzpicture}[x=0.400cm,y=0.400cm,baseline=(mid)]
\coordinate (mid) at (1.5,-1.5);
\foreach \c/\r in {0/0,1/0,1/1,2/1,0/2,2/2}{\fill[wminus] (\c,-\r) rectangle ++(1,-1);}
\draw[xstep=1,ystep=1,gray!45,line width=0.12pt] (0,-3) grid (3,0);
\draw[line width=0.6pt] (0,0) rectangle (3,-3);
\node[font=\scriptsize,below,align=center] at (1.5,-3) {$c$};
\end{tikzpicture}
\hspace{0.25cm}
\begin{tikzpicture}[x=0.400cm,y=0.400cm,baseline=(mid)]
\coordinate (mid) at (1.5,-1.5);
\foreach \c/\r in {0/0,1/1,2/2}{\fill[wminus] (\c,-\r) rectangle ++(1,-1);}
\draw[xstep=1,ystep=1,gray!45,line width=0.12pt] (0,-3) grid (3,0);
\draw[line width=0.6pt] (0,0) rectangle (3,-3);
\node[font=\scriptsize,below,align=center] at (1.5,-3) {$d$};
\end{tikzpicture}
\hspace{0.90cm}
\begin{tikzpicture}[x=0.270cm,y=0.270cm,baseline=(mid)]
\coordinate (mid) at (6.0,-6.0);
\foreach \c/\r in {2/0,0/1,1/2,2/3,5/3,9/3,11/3,1/4,3/4,10/4,11/4,0/5,4/5,9/5,10/5,1/6,2/6,5/6,8/6,0/7,1/7,4/7,6/7,0/8,2/8,3/8,7/8,2/9,8/9,11/9,1/10,7/10,9/10,0/11,6/11,10/11}{\fill[wplus] (\c,-\r) rectangle ++(1,-1);}
\foreach \c/\r in {0/0,1/0,5/0,7/0,8/0,11/0,1/1,2/1,4/1,6/1,7/1,10/1,0/2,2/2,3/2,6/2,8/2,9/2,3/3,4/3,8/3,4/4,5/4,7/4,3/5,5/5,6/5,6/6,7/6,11/6,7/7,8/7,10/7,6/8,8/8,9/8,3/9,5/9,9/9,10/9,4/10,5/10,10/10,11/10,3/11,4/11,9/11,11/11}{\fill[wminus] (\c,-\r) rectangle ++(1,-1);}
\draw[xstep=1,ystep=1,gray!45,line width=0.12pt] (0,-12) grid (12,0);
\draw[line width=0.6pt] (3,0) -- (3,-12);
\draw[line width=0.6pt] (0,-3) -- (12,-3);
\draw[line width=0.6pt] (6,0) -- (6,-12);
\draw[line width=0.6pt] (0,-6) -- (12,-6);
\draw[line width=0.6pt] (9,0) -- (9,-12);
\draw[line width=0.6pt] (0,-9) -- (12,-9);
\draw[line width=0.6pt] (0,0) rectangle (12,-12);
\node[font=\scriptsize,below,align=center] at (6.0,-12) {$G=\W(12,7)$};
\end{tikzpicture}
\caption{The four circulants of the example and the resulting $\W(12,7)=G$ (stored matrix of the catalogue). Thick lines separate the sixteen blocks $3\times3$. The small matrices $a,b,c,d$ are auxiliary and are not weighing matrices.}\label{fig:circ}
\end{figure}

\subsection{Necessary conditions: the profile}\label{sec:profile}

For $i=0,1,2,3$ let $w_i$ be the number of nonzero entries of the $i$-th sequence and $s_i$ the sum of its entries (so $A,B,C,D$ have profiles $(w_0,s_0),\dots,(w_3,s_3)$; in the example above $w=(3,1,2,1)$ and $s=(-1,-1,-2,-1)$). Let us call the list $(w_i,s_i)_{i=0}^3$ the \emph{profile}. Each sequence has $u_i^{+}$ entries $+1$ and $u_i^{-}$ entries $-1$, so $w_i=u_i^{+}+u_i^{-}$ and $s_i=u_i^{+}-u_i^{-}$. Hence $w_i-s_i=2u_i^{-}$ is even; $|s_i|\le w_i$, because $|u_i^{+}-u_i^{-}|\le u_i^{+}+u_i^{-}$; and $w_i\le m$, because the sequence has only $m$ positions.

\begin{proposition}\label{prop:profile}
If $G$ is a $\W(4m,k)$, then
\begin{equation}\label{eq:profile}
\sum_iw_i=k,\qquad\sum_is_i^2=k,\qquad w_i\equiv s_i\pmod2,\qquad|s_i|\le w_i\le m .
\end{equation}
\end{proposition}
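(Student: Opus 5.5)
The statement concerns a Goethals--Seidel matrix $G$ built from $A,B,C,D$, so I will work entirely through Corollary~\ref{cor:paf}. If $G$ is a $\W(4m,k)$, the corollary gives $P(0)=k$ and $P(s)=0$ for $1\le s\le m-1$. The two parity/size conditions $w_i\equiv s_i\pmod 2$ and $|s_i|\le w_i\le m$ were already established just before the statement, from $w_i=u_i^++u_i^-$ and $s_i=u_i^+-u_i^-$. They hold for any ternary sequence, so they need only be recalled. The real content is therefore the two sums.

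\emph{First identity.} Since $\PAF_x(0)=\sum_t x_t^2$ equals the number of nonzero entries of $x$, we get $P(0)=w_0+w_1+w_2+w_3$. Combined with $P(0)=k$, this gives $\sum_i w_i=k$.

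\emph{Second identity.} The key step is to sum the PAF of a single sequence over all shifts:
\[
\sum_{s=0}^{m-1}\PAF_x(s)=\sum_{t=0}^{m-1}x_t\sum_{s=0}^{m-1}x_{(t+s)\bmod m}=\Bigl(\sum_t x_t\Bigr)^2 .
\]
This holds because, for fixed $t$, the index $(t+s)\bmod m$ runs over all residues as $s$ does. Applying this to each of $A,B,C,D$ and adding, we get
\[
\sum_{s=0}^{m-1}P(s)=\sum_i s_i^2 .
\]
The left side equals $P(0)+0+\dots+0=k$. Equivalently, one can evaluate $\one\T\Gamma\one$ in two ways: it is $\sum_i(\one\T x)^2$ for each circulant $x$ (a circulant has constant row sums $s_i$), and it is $k\,m$ from $\Gamma=kI_m$, with the same normalisation. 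I will present the direct summation, which is shorter.

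No step is a real obstacle. The only point needing care is the reindexing in the double sum, namely that $s\mapsto(t+s)\bmod m$ is a bijection of $\{0,\dots,m-1\}$. The case $m=1$ is covered trivially, since then $P(0)=k$ and $\sum_i s_i^2=\sum_i w_i$ holds because $x_0^2=|x_0|$.
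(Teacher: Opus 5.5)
Your proposal is correct and matches the paper's proof essentially step for step. You recall the parity and size conditions from the preceding discussion, obtain $\sum_i w_i=k$ from $P(0)$, and obtain $\sum_i s_i^2=k$ by summing $\PAF_x(s)$ over all shifts (using the bijection $s\mapsto(t+s)\bmod m$) together with $P(s)=0$ for $s\ne0$. Your aside via $\one\T\Gamma\one$ also works, but it should read $\one\T\Gamma\one=m\sum_i s_i^2=km$.
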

\begin{proof}
The last two conditions were just shown. By Corollary~\ref{cor:paf}, $P(0)=k$ and $P(s)=0$ for $s\ne0$. Now $P(0)=\sum_iw_i$, because $\PAF_x(0)$ is the number of nonzero entries of $x$. Moreover, for any sequence $x$,
\[
\sum_{s=0}^{m-1}\PAF_x(s)=\sum_{t=0}^{m-1}x_t\sum_{s=0}^{m-1}x_{(t+s)\bmod m}=\sum_tx_t\Bigl(\sum_ux_u\Bigr)=\Bigl(\sum_tx_t\Bigr)^2,
\]
because for fixed $t$ the index $(t+s)\bmod m$ runs through all residues as $s$ does. Summing over the four sequences, $\sum_{s}P(s)=\sum_is_i^2$; the left side equals $P(0)=k$.
\end{proof}
Negating a sequence does not change its PAF (each product $x_tx_{t+s}$ keeps its sign), and permuting the four sequences does not change $P$. Hence, when \emph{enumerating} profiles, one may assume $s_0\ge s_1\ge s_2\ge s_3\ge0$. We call a profile satisfying~\eqref{eq:profile} \emph{admissible}, and an \emph{admissible sum tuple} a tuple $(s_0,s_1,s_2,s_3)$ with $s_0\ge s_1\ge s_2\ge s_3\ge 0$ for which at least one choice of weights $w_i$ completes it to an admissible profile. The conditions~\eqref{eq:profile} are necessary, not sufficient. The ordering convention is only a device for enumeration: the profiles reported later (Table~\ref{tab:cases}) keep the order of the stored sequences and need not be sorted.

\begin{proposition}\label{prop:parity}
If $m$ is even, there is no Goethals--Seidel matrix $\W(4m,4m-1)$.
\end{proposition}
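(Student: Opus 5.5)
The plan is to derive a contradiction from the necessary conditions of Proposition~\ref{prop:profile} alone, by counting weights and then reducing $\sum_i s_i^2=k$ modulo $4$. No autocorrelation argument beyond what Proposition~\ref{prop:profile} already encodes should be needed.

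Suppose, for a contradiction, that the Goethals--Seidel array $G$ built from sequences $A,B,C,D$ of length $m$ is a $\W(4m,4m-1)$, with $m$ even. By Proposition~\ref{prop:profile}, its profile satisfies $\sum_i w_i=4m-1$ and $w_i\le m$ for each $i$. Since four numbers each at most $m$ sum to $4m-1$, exactly one of them equals $m-1$ and the other three equal $m$. Indeed, the total deficit $\sum_i(m-w_i)$ equals $1$, and each term is a nonnegative integer.

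Next I use the parity condition $w_i\equiv s_i\pmod 2$. Because $m$ is even, the three sequences with $w_i=m$ have even sums $s_i$. The remaining sequence has $w_i=m-1$ odd, so its sum $s_i$ is odd. An even integer squared is $\equiv0\pmod4$, and an odd integer squared is $\equiv1\pmod4$. Hence
\[
\sum_i s_i^2\equiv 1\pmod 4 .
\]
On the other hand, Proposition~\ref{prop:profile} gives $\sum_i s_i^2=k=4m-1\equiv3\pmod4$, which is a contradiction. So no such $G$ exists.

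There is no real obstacle; the only step requiring care is the weight-counting step forcing the pattern $(m,m,m,m-1)$. That step uses $w_i\le m$, which holds because each sequence has only $m$ positions. I would also remark that the argument applies only to the Goethals--Seidel form, since it relies on the profile conditions. It therefore says nothing about the existence of other $\W(4m,4m-1)$.
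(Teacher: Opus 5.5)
Your proof is correct and follows essentially the same route as the paper's proof. Both use the profile conditions to force the weight pattern $(m,m,m,m-1)$, then combine the parity condition $w_i\equiv s_i\pmod 2$ with squares modulo $4$ to get $\sum_i s_i^2\equiv1\not\equiv3\equiv4m-1\pmod4$. Your deficit argument just spells out the counting step that the paper states directly.
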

\begin{proof}
By~\eqref{eq:profile}, $\sum_iw_i=4m-1$ with every $w_i\le m$, so exactly one $w_i$ equals $m-1$ and the other three equal $m$. For $m$ even, three of the $w_i$ are even and one is odd, hence (since $s_i\equiv w_i$) three of the $s_i$ are even and one is odd. An even square is $\equiv0$ and an odd square is $\equiv1\pmod 4$, so $\sum s_i^2\equiv1\pmod4$, whereas $k=4m-1\equiv3\pmod4$. Contradiction.
\end{proof}
In the range $n\le108$, Proposition~\ref{prop:parity} concerns the thirteen pairs $(8j,8j-1)$, $1\le j\le13$. Seven of them ($n=8,24,32,48,72,80,104$, where $n-1$ is a prime power $\equiv3\pmod4$) are obtained by Paley's construction, and the remaining six, $(16,15)$, $(40,39)$, $(56,55)$, $(64,63)$, $(88,87)$, $(96,95)$, by skew-Hadamard doubling (Section~\ref{sec:skew}). The proposition is a statement about mathematics; \emph{how the program notices} the impossibility for these six pairs is described in Section~\ref{sec:filter} (for $(16,15)$ the exhaustive search finds no solution, for the other five the profile filter finds no admissible profile).
\section{The algorithm}\label{sec:algo}

\subsection{Overview}\label{sec:overview}

The main program, \texttt{all\_chains.py}, processes the orders $n=4,8,\dots,108$ in increasing order and, for each $n$, the weights $k=1,2,\dots,n$. For each pair $(n,k)$ it tries the methods below \emph{in the order listed}; the first method that produces a matrix passing the exact check of Section~\ref{sec:verify} \emph{wins}: the matrix is written to the \emph{store} (one file \texttt{solutions/W\{n\}\_\{k\}/solution\_001.json} containing the matrix, the name of the method and a \emph{recipe}, i.e.\ the data needed to rebuild it) and none of the remaining methods is tried for this pair. Figure~\ref{fig:flow} shows the logic. A matrix already in the store is called \emph{available}; the direct-sum and Kronecker methods build new matrices from available ones, which is possible because smaller orders are processed first. The small matrices $\W(1,1)=(1)$, $\W(2,1)=I_2$ and $\W(2,2)=H_2$ (the \emph{atoms}) are always available. Only $\W(2,1)$ and $\W(2,2)$ can serve as factors of the Kronecker method (with $a=2$); $\W(1,1)$ is included in the code for completeness, but no method uses it. For $k=2$ the method \texttt{h2blocks} also uses $H_2$, since it is the product $I_{n/2}\otimes H_2$ (Method~1).

\begin{figure}[ht]
\centering

\begin{tikzpicture}[font=\footnotesize,
  box/.style={draw,rounded corners=2pt,align=left,minimum height=1.05cm,text width=7.3cm,fill=gray!8,inner sep=4pt},
  big/.style={draw,rounded corners=2pt,align=center,fill=blue!8,text width=3.4cm,inner sep=4pt},
  arr/.style={-{Stealth[length=5pt]}}]
\node[font=\footnotesize] (st) at (0,1.2) {pair $(n,k)$ (orders in increasing order; for each $n$, $k=1,\dots,n$)};
\node[box] (m1) at (0,0) {\textbf{1. trivial.} $k=1$: $I_n$ (\texttt{identity});\\ $k=2$: $I_{n/2}\otimes H_2$ (\texttt{h2blocks})};
\node[box] (m2) at (0,-1.9) {\textbf{2. direct sum} (\texttt{dsum}).\\ $\W(a,k)\oplus\W(n-a,k)$, $a=4\lfloor n/8\rfloor,\dots,4$};
\node[box] (m3) at (0,-3.8) {\textbf{3. Kronecker} (\texttt{kron}).\\ $\W(a,k_1)\otimes\W(n/a,k/k_1)$, $2\le a\le n/a$};
\node[box] (m4) at (0,-5.7) {\textbf{4. Paley.} $n-1$ prime power: \texttt{paley1}, \texttt{paley1\_h};\\ $k=n$ and $n/2-1\equiv1\pmod 4$ a prime power: \texttt{paley2}};
\node[box] (m5) at (0,-7.6) {\textbf{5. Goethals--Seidel} (\texttt{gs}), $m=n/4$.\\ $m\le6$: exhaustive search.\\ $m\ge7$: profile filter, then annealing.};
\node[box,fill=orange!12] (m6) at (0,-10.0) {\textbf{6. skew-Hadamard doubling} (\texttt{skew\_hadamard}),\\ second program \texttt{remaining\_cases.py}};
\node[big,minimum height=7.4cm] (store) at (9.1,-3.8) {\textbf{exact check}\\[2pt] $WW^{\top}=kI_n$\\ entries in $\{-1,0,1\}$\\[6pt] $\downarrow$\\[6pt] \textbf{store}\\ JSON file: matrix,\\ method, recipe\\[6pt] (the pair is done)};
\draw[arr] (m1.south) -- node[right,font=\scriptsize] {fails} (m2.north);
\draw[arr] (m2.south) -- node[right,font=\scriptsize] {fails} (m3.north);
\draw[arr] (m3.south) -- node[right,font=\scriptsize] {fails} (m4.north);
\draw[arr] (m4.south) -- node[right,font=\scriptsize] {fails} (m5.north);
\draw[arr,dashed] (m5.south) -- node[right,font=\scriptsize] {no GS matrix exists (status \texttt{none\_gs}; six pairs)} (m6.north);
\foreach \i in {1,2,3,4,5}{\draw[arr] (m\i.east) -- node[above,font=\scriptsize] {success} (m\i.east -| store.west);}
\draw[arr] (m6.east) -| (store.south);
\end{tikzpicture}

\caption{Processing of one pair $(n,k)$. Each method is tried only if all previous ones failed; as soon as one produces a matrix that passes the exact check, the matrix is stored and the processing of this pair stops. The six pairs for which no Goethals--Seidel matrix exists (Proposition~\ref{prop:parity}) are completed afterwards by a second program, \texttt{remaining\_cases.py}.}\label{fig:flow}
\end{figure}

Table~\ref{tab:methods} lists the methods, the name under which each appears in the JSON files, and the number of pairs for which it produced the stored matrix. The sections that follow describe them in detail. Every pair is covered exactly once.

\begin{table}[ht]
\centering\small
\begin{tabular}{@{}llp{6.2cm}r@{}}
\toprule
Stage & JSON \texttt{method} & Construction & Pairs \\
\midrule
1 & \texttt{identity} & $I_n$, for $k=1$ & 27\\
1 & \texttt{h2blocks} & $I_{n/2}\otimes H_2$, for $k=2$ & 27\\
1 & \texttt{dsum} & $\W(a,k)\oplus\W(n-a,k)$ & 672\\
1 & \texttt{kron} & $\W(a,k_1)\otimes\W(b,k_2)$, $ab=n$, $k_1k_2=k$ & 201\\
1 & \texttt{paley1} & Paley I, $\W(q+1,q)$, $q\equiv3\pmod4$ & 16\\
1 & \texttt{paley1\_h} & Paley I plus identity, $\W(q+1,q+1)$ & 8\\
1 & \texttt{paley2} & Paley II, $\W(2(q+1),2(q+1))$, $q\equiv1\pmod4$ & 4\\
2 & \texttt{gs} & Goethals--Seidel array: exhaustive search ($n\le24$): 24; annealing ($n\ge28$): 527 & 551\\
3 & \texttt{skew\_hadamard} & doubling of a skew-Hadamard matrix, $\W(n,n-1)$ & 6\\
\midrule
 & & Total & 1512\\
\bottomrule
\end{tabular}
\caption{Methods and number of pairs $(n,k)\in\mathcal S$ whose stored matrix they produced. In the Paley rows, $q$ is an odd prime power, defined in Method~4.}\label{tab:methods}
\end{table}

\subsection{Stage 1: classical constructions}\label{sec:stage1}

\subsubsection*{Method 1: trivial cases ($k=1$ and $k=2$)}

For $k=1$ the program stores the identity matrix $I_n$ (method \texttt{identity}): it has a single entry $+1$ in each row and column, and $I_nI_n\T=I_n=1\cdot I_n$. For $k=2$ it stores (method \texttt{h2blocks}) the matrix
\[
I_{n/2}\otimes H_2=\diag(H_2,H_2,\dots,H_2)\qquad(n/2\text{ copies}),
\]
the two expressions being equal by the definition of $\otimes$ (Section~\ref{sec:blocks}). It is a $\W(n,2)$: by the block rule, $\diag(H_2,\dots,H_2)\diag(H_2\T,\dots,H_2\T)=\diag(H_2H_2\T,\dots,H_2H_2\T)=\diag(2I_2,\dots,2I_2)=2I_n$, using $H_2H_2\T=2I_2$ (Section~\ref{sec:intro}). (This is not the general Kronecker method below: \texttt{h2blocks} is tried first and only for $k=2$.)

\emph{Example.} Figure~\ref{fig:basic} shows the stored matrices $\W(8,1)=I_8$ and $\W(8,2)=I_4\otimes H_2$.

\begin{figure}[ht]
\centering
\begin{tikzpicture}[x=0.300cm,y=0.300cm,baseline=(mid)]
\coordinate (mid) at (4.0,-4.0);
\foreach \c/\r in {0/0,1/1,2/2,3/3,4/4,5/5,6/6,7/7}{\fill[wplus] (\c,-\r) rectangle ++(1,-1);}
\draw[xstep=1,ystep=1,gray!45,line width=0.12pt] (0,-8) grid (8,0);
\draw[line width=0.6pt] (0,0) rectangle (8,-8);
\node[font=\scriptsize,below,align=center] at (4.0,-8) {$\W(8,1)=I_8$};
\end{tikzpicture}
\hspace{1.20cm}
\begin{tikzpicture}[x=0.300cm,y=0.300cm,baseline=(mid)]
\coordinate (mid) at (4.0,-4.0);
\foreach \c/\r in {0/0,1/0,0/1,2/2,3/2,2/3,4/4,5/4,4/5,6/6,7/6,6/7}{\fill[wplus] (\c,-\r) rectangle ++(1,-1);}
\foreach \c/\r in {1/1,3/3,5/5,7/7}{\fill[wminus] (\c,-\r) rectangle ++(1,-1);}
\draw[xstep=1,ystep=1,gray!45,line width=0.12pt] (0,-8) grid (8,0);
\draw[line width=0.6pt] (2,0) -- (2,-8);
\draw[line width=0.6pt] (0,-2) -- (8,-2);
\draw[line width=0.6pt] (4,0) -- (4,-8);
\draw[line width=0.6pt] (0,-4) -- (8,-4);
\draw[line width=0.6pt] (6,0) -- (6,-8);
\draw[line width=0.6pt] (0,-6) -- (8,-6);
\draw[line width=0.6pt] (0,0) rectangle (8,-8);
\node[font=\scriptsize,below,align=center] at (4.0,-8) {$\W(8,2)=I_4\otimes H_2$};
\end{tikzpicture}
\caption{Method 1 at order $8$: the stored matrices $\W(8,1)$ and $\W(8,2)$.}\label{fig:basic}
\end{figure}

\subsubsection*{Method 2: direct sum}

\begin{lemma}\label{lem:dsum}
If $W_1$ is a $\W(a,k)$ and $W_2$ is a $\W(b,k)$ \emph{with the same weight $k$}, then $\diag(W_1,W_2)$ is a $\W(a+b,k)$.
\end{lemma}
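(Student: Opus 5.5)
The plan is to verify the two parts of the definition of a weighing matrix directly for $W=\diag(W_1,W_2)$, using the block rules of Section~\ref{sec:blocks}.

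\emph{Entries.} The entries of $W$ are either entries of $W_1$, entries of $W_2$, or entries of the two zero off-diagonal blocks. In every case they lie in $\{-1,0,1\}$, and $W$ is square of order $a+b$.

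\emph{Transpose.} By the first block rule, one transposes the arrangement of blocks and then each block. Since the zero blocks are swapped with each other, this gives $W\T=\diag(W_1\T,W_2\T)$.

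\emph{Product.} By the second block rule, applied to the $2\times2$ block partition (the sizes $a$ and $b$ fit), the diagonal blocks of $WW\T$ are
\[
W_1W_1\T+0\cdot 0=kI_a \quad\text{and}\quad 0\cdot0+W_2W_2\T=kI_b .
\]
The off-diagonal blocks are $W_1\cdot0+0\cdot W_2\T=0$ and $0\cdot W_1\T+W_2\cdot 0=0$. Hence $WW\T=\diag(kI_a,kI_b)=kI_{a+b}$.

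The only point requiring care, and the place where the hypothesis is actually used, is that the two diagonal blocks carry the \emph{same} scalar $k$. Otherwise $WW\T=\diag(k_1I_a,k_2I_b)$ would not be a multiple of the identity. There is no real obstacle beyond this; the argument is the same computation already used in Method~1 for $\diag(H_2,\dots,H_2)$.
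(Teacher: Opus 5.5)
Your proof is correct and follows the paper's argument exactly: you check that the entries lie in $\{-1,0,1\}$, then compute $\diag(W_1,W_2)\,\diag(W_1\T,W_2\T)$ blockwise to get $\diag(kI_a,kI_b)=kI_{a+b}$. The paper uses $\diag(W_1,W_2)\T=\diag(W_1\T,W_2\T)$ without comment, whereas you justify it explicitly.
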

\begin{proof}
Its entries lie in $\{-1,0,1\}$. By the rules for block matrices,
\[
\begin{pmatrix}W_1&0\\0&W_2\end{pmatrix}\begin{pmatrix}W_1\T&0\\0&W_2\T\end{pmatrix}
=\begin{pmatrix}W_1W_1\T+0\cdot0&W_1\cdot0+0\cdot W_2\T\\0\cdot W_1\T+W_2\cdot0&0\cdot0+W_2W_2\T\end{pmatrix}
=\begin{pmatrix}kI_a&0\\0&kI_b\end{pmatrix}=kI_{a+b}.\qedhere
\]
\end{proof}

\emph{What the program does.} For a pair $(n,k)$ it considers splittings $n=a+b$ in which $a$ and $b$ are multiples of $4$ (only orders divisible by $4$ are stored), starting with the most balanced one,
\[
a=4\lfloor n/8\rfloor,\quad\text{then }a-4,\ a-8,\ \dots,\ 4,
\]
and with $b=n-a$ (for $n=4$ this list is empty, since $a=0$). The first $a$ for which $k\le a$ and both $\W(a,k)$ and $\W(b,k)$ are available gives the recipe $\W(a,k)\oplus\W(b,k)$ (method \texttt{dsum}). Thus always $a\le n/2\le b$: the program never puts the larger block first (in all $672$ stored \texttt{dsum} matrices $a\le b$; $a=b$ occurs in $336$ of them, and $a<b$ in the other $336$; $a=b$ is possible only when $n\equiv0\pmod8$, because $4\lfloor n/8\rfloor=n/2$ exactly then). The condition $k\le a$ is needed because $\W(a,k)$ cannot exist for $k>a$. This is the most frequently used method ($672$ of the $1512$ pairs).

\emph{Examples.} (a) $n=8$, $k=3$: $a=4\lfloor8/8\rfloor=4=b$, and $\W(8,3)=\W(4,3)\oplus\W(4,3)$ (Figure~\ref{fig:dsum1}, top). (b) $n=12$, $k=3$: $a=4\lfloor12/8\rfloor=4$, $b=8$, and $\W(12,3)=\W(4,3)\oplus\W(8,3)$ with \emph{different} blocks (Figure~\ref{fig:dsum1}, bottom). (c) $n=20$, $k=7$: $a=4\lfloor20/8\rfloor=8$, $b=12$, and $\W(20,7)=\W(8,7)\oplus\W(12,7)$ (Figure~\ref{fig:dsum2}); here the two blocks even come from different methods: $\W(8,7)$ from Paley's construction (Method 4) and $\W(12,7)$ from the Goethals--Seidel array of Section~\ref{sec:ex127}.

\begin{figure}[ht]
\centering
\begin{tikzpicture}[x=0.400cm,y=0.400cm,baseline=(mid)]
\coordinate (mid) at (2.0,-2.0);
\foreach \c/\r in {1/0,2/0,3/0,2/1,3/2,1/3}{\fill[wplus] (\c,-\r) rectangle ++(1,-1);}
\foreach \c/\r in {0/1,3/1,0/2,1/2,0/3,2/3}{\fill[wminus] (\c,-\r) rectangle ++(1,-1);}
\draw[xstep=1,ystep=1,gray!45,line width=0.12pt] (0,-4) grid (4,0);
\draw[line width=0.6pt] (0,0) rectangle (4,-4);
\node[font=\scriptsize,below,align=center] at (2.0,-4) {$\W(4,3)$};
\end{tikzpicture}
\hspace{0.15cm}{\large $\oplus$}\hspace{0.15cm}
\begin{tikzpicture}[x=0.400cm,y=0.400cm,baseline=(mid)]
\coordinate (mid) at (2.0,-2.0);
\foreach \c/\r in {1/0,2/0,3/0,2/1,3/2,1/3}{\fill[wplus] (\c,-\r) rectangle ++(1,-1);}
\foreach \c/\r in {0/1,3/1,0/2,1/2,0/3,2/3}{\fill[wminus] (\c,-\r) rectangle ++(1,-1);}
\draw[xstep=1,ystep=1,gray!45,line width=0.12pt] (0,-4) grid (4,0);
\draw[line width=0.6pt] (0,0) rectangle (4,-4);
\node[font=\scriptsize,below,align=center] at (2.0,-4) {$\W(4,3)$};
\end{tikzpicture}
\hspace{0.15cm}{\large $=$}\hspace{0.15cm}
\begin{tikzpicture}[x=0.340cm,y=0.340cm,baseline=(mid)]
\coordinate (mid) at (4.0,-4.0);
\foreach \c/\r in {1/0,2/0,3/0,2/1,3/2,1/3,5/4,6/4,7/4,6/5,7/6,5/7}{\fill[wplus] (\c,-\r) rectangle ++(1,-1);}
\foreach \c/\r in {0/1,3/1,0/2,1/2,0/3,2/3,4/5,7/5,4/6,5/6,4/7,6/7}{\fill[wminus] (\c,-\r) rectangle ++(1,-1);}
\draw[xstep=1,ystep=1,gray!45,line width=0.12pt] (0,-8) grid (8,0);
\draw[line width=0.6pt] (4,0) -- (4,-8);
\draw[line width=0.6pt] (0,-4) -- (8,-4);
\draw[line width=0.6pt] (0,0) rectangle (8,-8);
\node[font=\scriptsize,below,align=center] at (4.0,-8) {$\W(8,3)$ ($a=b=4$)};
\end{tikzpicture}

\medskip
\begin{tikzpicture}[x=0.400cm,y=0.400cm,baseline=(mid)]
\coordinate (mid) at (2.0,-2.0);
\foreach \c/\r in {1/0,2/0,3/0,2/1,3/2,1/3}{\fill[wplus] (\c,-\r) rectangle ++(1,-1);}
\foreach \c/\r in {0/1,3/1,0/2,1/2,0/3,2/3}{\fill[wminus] (\c,-\r) rectangle ++(1,-1);}
\draw[xstep=1,ystep=1,gray!45,line width=0.12pt] (0,-4) grid (4,0);
\draw[line width=0.6pt] (0,0) rectangle (4,-4);
\node[font=\scriptsize,below,align=center] at (2.0,-4) {$\W(4,3)$};
\end{tikzpicture}
\hspace{0.15cm}{\large $\oplus$}\hspace{0.15cm}
\begin{tikzpicture}[x=0.340cm,y=0.340cm,baseline=(mid)]
\coordinate (mid) at (4.0,-4.0);
\foreach \c/\r in {1/0,2/0,3/0,2/1,3/2,1/3,5/4,6/4,7/4,6/5,7/6,5/7}{\fill[wplus] (\c,-\r) rectangle ++(1,-1);}
\foreach \c/\r in {0/1,3/1,0/2,1/2,0/3,2/3,4/5,7/5,4/6,5/6,4/7,6/7}{\fill[wminus] (\c,-\r) rectangle ++(1,-1);}
\draw[xstep=1,ystep=1,gray!45,line width=0.12pt] (0,-8) grid (8,0);
\draw[line width=0.6pt] (0,0) rectangle (8,-8);
\node[font=\scriptsize,below,align=center] at (4.0,-8) {$\W(8,3)$};
\end{tikzpicture}
\hspace{0.15cm}{\large $=$}\hspace{0.15cm}
\begin{tikzpicture}[x=0.300cm,y=0.300cm,baseline=(mid)]
\coordinate (mid) at (6.0,-6.0);
\foreach \c/\r in {1/0,2/0,3/0,2/1,3/2,1/3,5/4,6/4,7/4,6/5,7/6,5/7,9/8,10/8,11/8,10/9,11/10,9/11}{\fill[wplus] (\c,-\r) rectangle ++(1,-1);}
\foreach \c/\r in {0/1,3/1,0/2,1/2,0/3,2/3,4/5,7/5,4/6,5/6,4/7,6/7,8/9,11/9,8/10,9/10,8/11,10/11}{\fill[wminus] (\c,-\r) rectangle ++(1,-1);}
\draw[xstep=1,ystep=1,gray!45,line width=0.12pt] (0,-12) grid (12,0);
\draw[line width=0.6pt] (4,0) -- (4,-12);
\draw[line width=0.6pt] (0,-4) -- (12,-4);
\draw[line width=0.6pt] (0,0) rectangle (12,-12);
\node[font=\scriptsize,below,align=center] at (6.0,-12) {$\W(12,3)$ ($a=4$, $b=8$)};
\end{tikzpicture}
\caption{Direct sums of stored matrices with $a=b$ (top) and $a<b$ (bottom).}\label{fig:dsum1}
\end{figure}
\begin{figure}[ht]
\centering
\begin{tikzpicture}[x=0.300cm,y=0.300cm,baseline=(mid)]
\coordinate (mid) at (4.0,-4.0);
\foreach \c/\r in {1/0,2/0,3/0,4/0,5/0,6/0,7/0,2/1,3/1,5/1,3/2,4/2,6/2,4/3,5/3,7/3,1/4,5/4,6/4,2/5,6/5,7/5,1/6,3/6,7/6,1/7,2/7,4/7}{\fill[wplus] (\c,-\r) rectangle ++(1,-1);}
\foreach \c/\r in {0/1,4/1,6/1,7/1,0/2,1/2,5/2,7/2,0/3,1/3,2/3,6/3,0/4,2/4,3/4,7/4,0/5,1/5,3/5,4/5,0/6,2/6,4/6,5/6,0/7,3/7,5/7,6/7}{\fill[wminus] (\c,-\r) rectangle ++(1,-1);}
\draw[xstep=1,ystep=1,gray!45,line width=0.12pt] (0,-8) grid (8,0);
\draw[line width=0.6pt] (0,0) rectangle (8,-8);
\node[font=\scriptsize,below,align=center] at (4.0,-8) {$\W(8,7)$ (Paley I)};
\end{tikzpicture}
\hspace{0.15cm}{\large $\oplus$}\hspace{0.15cm}
\begin{tikzpicture}[x=0.300cm,y=0.300cm,baseline=(mid)]
\coordinate (mid) at (6.0,-6.0);
\foreach \c/\r in {2/0,0/1,1/2,2/3,5/3,9/3,11/3,1/4,3/4,10/4,11/4,0/5,4/5,9/5,10/5,1/6,2/6,5/6,8/6,0/7,1/7,4/7,6/7,0/8,2/8,3/8,7/8,2/9,8/9,11/9,1/10,7/10,9/10,0/11,6/11,10/11}{\fill[wplus] (\c,-\r) rectangle ++(1,-1);}
\foreach \c/\r in {0/0,1/0,5/0,7/0,8/0,11/0,1/1,2/1,4/1,6/1,7/1,10/1,0/2,2/2,3/2,6/2,8/2,9/2,3/3,4/3,8/3,4/4,5/4,7/4,3/5,5/5,6/5,6/6,7/6,11/6,7/7,8/7,10/7,6/8,8/8,9/8,3/9,5/9,9/9,10/9,4/10,5/10,10/10,11/10,3/11,4/11,9/11,11/11}{\fill[wminus] (\c,-\r) rectangle ++(1,-1);}
\draw[xstep=1,ystep=1,gray!45,line width=0.12pt] (0,-12) grid (12,0);
\draw[line width=0.6pt] (0,0) rectangle (12,-12);
\node[font=\scriptsize,below,align=center] at (6.0,-12) {$\W(12,7)$ (Goethals--Seidel)};
\end{tikzpicture}
\hspace{0.15cm}{\large $=$}\hspace{0.15cm}
\begin{tikzpicture}[x=0.260cm,y=0.260cm,baseline=(mid)]
\coordinate (mid) at (10.0,-10.0);
\foreach \c/\r in {1/0,2/0,3/0,4/0,5/0,6/0,7/0,2/1,3/1,5/1,3/2,4/2,6/2,4/3,5/3,7/3,1/4,5/4,6/4,2/5,6/5,7/5,1/6,3/6,7/6,1/7,2/7,4/7,10/8,8/9,9/10,10/11,13/11,17/11,19/11,9/12,11/12,18/12,19/12,8/13,12/13,17/13,18/13,9/14,10/14,13/14,16/14,8/15,9/15,12/15,14/15,8/16,10/16,11/16,15/16,10/17,16/17,19/17,9/18,15/18,17/18,8/19,14/19,18/19}{\fill[wplus] (\c,-\r) rectangle ++(1,-1);}
\foreach \c/\r in {0/1,4/1,6/1,7/1,0/2,1/2,5/2,7/2,0/3,1/3,2/3,6/3,0/4,2/4,3/4,7/4,0/5,1/5,3/5,4/5,0/6,2/6,4/6,5/6,0/7,3/7,5/7,6/7,8/8,9/8,13/8,15/8,16/8,19/8,9/9,10/9,12/9,14/9,15/9,18/9,8/10,10/10,11/10,14/10,16/10,17/10,11/11,12/11,16/11,12/12,13/12,15/12,11/13,13/13,14/13,14/14,15/14,19/14,15/15,16/15,18/15,14/16,16/16,17/16,11/17,13/17,17/17,18/17,12/18,13/18,18/18,19/18,11/19,12/19,17/19,19/19}{\fill[wminus] (\c,-\r) rectangle ++(1,-1);}
\draw[xstep=1,ystep=1,gray!45,line width=0.12pt] (0,-20) grid (20,0);
\draw[line width=0.6pt] (8,0) -- (8,-20);
\draw[line width=0.6pt] (0,-8) -- (20,-8);
\draw[line width=0.6pt] (0,0) rectangle (20,-20);
\node[font=\scriptsize,below,align=center] at (10.0,-20) {$\W(20,7)$};
\end{tikzpicture}
\caption{$\W(20,7)=\W(8,7)\oplus\W(12,7)$: a direct sum with $a=8<b=12$, as stored in the catalogue.}\label{fig:dsum2}
\end{figure}

\subsubsection*{Method 3: Kronecker product}

\begin{lemma}\label{lem:kronw}
If $W_1$ is a $\W(a,k_1)$ and $W_2$ is a $\W(b,k_2)$ then $W_1\otimes W_2$ is a $\W(ab,k_1k_2)$.
\end{lemma}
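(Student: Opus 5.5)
The plan is to verify the two parts of the definition of a weighing matrix directly for $W=W_1\otimes W_2$: that its entries lie in $\{-1,0,1\}$ and that $WW\T=k_1k_2I_{ab}$. Both follow from the definition of the Kronecker product together with Lemma~\ref{lem:kron}.

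For the entries, I would use the definition of $\otimes$. Every entry of $W_1\otimes W_2$ is a product $(W_1)_{ij}(W_2)_{uv}$ of an entry of $W_1$ and an entry of $W_2$. Since $\{-1,0,1\}$ is closed under multiplication, every entry of $W$ lies in $\{-1,0,1\}$. The order is $ab$, because each of the $a\times a$ positions of $W_1$ is replaced by a $b\times b$ block.

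For the orthogonality, I would first apply the transpose rule of Lemma~\ref{lem:kron}, which gives $(W_1\otimes W_2)\T=W_1\T\otimes W_2\T$. Then the mixed-product rule of the same lemma, with $X=W_1$, $Z=W_1\T$ (both $a\times a$) and $Y=W_2$, $U=W_2\T$ (both $b\times b$), gives
\[
WW\T=(W_1\otimes W_2)(W_1\T\otimes W_2\T)=(W_1W_1\T)\otimes(W_2W_2\T)=(k_1I_a)\otimes(k_2I_b).
\]
Next, pulling the scalars out uses only the definition: the $(i,j)$ block of $(k_1I_a)\otimes(k_2I_b)$ is $k_1\delta_{ij}\,k_2I_b$. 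So this matrix equals $k_1k_2(I_a\otimes I_b)$, and $I_a\otimes I_b=I_{ab}$ was noted right after the definition of $\otimes$. Hence $WW\T=k_1k_2I_{ab}$, which shows that $W$ is a $\W(ab,k_1k_2)$.

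There is no real obstacle here. The only point needing care is the size hypothesis of Lemma~\ref{lem:kron}: the factors multiplied together must have matching sizes ($a\times a$ with $a\times a$, and $b\times b$ with $b\times b$). This holds because each $W_i$ is square and so is its transpose.
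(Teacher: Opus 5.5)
Your proof is correct and matches the paper's argument: the entries are products of entries of $W_1$ and $W_2$, hence lie in $\{-1,0,1\}$. Lemma~\ref{lem:kron} (the transpose rule, then the mixed-product rule) gives $(W_1W_1\T)\otimes(W_2W_2\T)=(k_1I_a)\otimes(k_2I_b)=k_1k_2(I_a\otimes I_b)=k_1k_2I_{ab}$.
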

\begin{proof}
The entries of $W_1\otimes W_2$ are products of an entry of $W_1$ and an entry of $W_2$, so they lie in $\{-1,0,1\}$. By Lemma~\ref{lem:kron},
\[
\begin{aligned}
(W_1\otimes W_2)(W_1\otimes W_2)\T&=(W_1\otimes W_2)(W_1\T\otimes W_2\T)=(W_1W_1\T)\otimes(W_2W_2\T)\\
&=(k_1I_a)\otimes(k_2I_b)=k_1k_2\,(I_a\otimes I_b)=k_1k_2I_{ab}.\qedhere\end{aligned}
\]
\end{proof}

\emph{What the program does.} It runs through the divisors $a$ of $n$ with $2\le a\le n/a$ in increasing order, writes $b=n/a$ (so $a\le b$ always), and for each $a$ runs through the divisors $k_1$ of $k$ in increasing order, with $k_2=k/k_1$. The first choice with $k_1\le a$, $k_2\le b$ and both $\W(a,k_1)$ and $\W(b,k_2)$ available (stored or atoms) gives the recipe $\W(a,k_1)\otimes\W(b,k_2)$ (method \texttt{kron}; $201$ pairs).

\emph{Examples.} (a) $n=8$, $k=6$: $a=2$, $b=4$; $k_1=1$ fails ($k_2=6>b$), $k_1=2$ gives $k_2=3$ and the atom $\W(2,2)=H_2$ together with the stored $\W(4,3)$ succeed: $\W(8,6)=H_2\otimes\W(4,3)=\begin{pmatrix}V&V\\V&-V\end{pmatrix}$ with $V=\W(4,3)$, i.e.\ every entry of $H_2$ is replaced by $\pm V$ (Figure~\ref{fig:kron}, top). (b) $n=16$, $k=9$: the direct sum is impossible ($k>a$ for $a=8,4$); for $a=2$, $b=8$ no divisor $k_1$ of $9$ works ($k_1=1$ gives $k_2=9>8$, and $k_1=3,9$ exceed $a=2$); for $a=4=b$, $k_1=1$ fails and $k_1=3$, $k_2=3$ succeeds: $\W(16,9)=\W(4,3)\otimes\W(4,3)$. Here the zero entries of the left factor give zero blocks (Figure~\ref{fig:kron}, bottom).

\begin{figure}[ht]
\centering
\begin{tikzpicture}[x=0.450cm,y=0.450cm,baseline=(mid)]
\coordinate (mid) at (1.0,-1.0);
\foreach \c/\r in {0/0,1/0,0/1}{\fill[wplus] (\c,-\r) rectangle ++(1,-1);}
\foreach \c/\r in {1/1}{\fill[wminus] (\c,-\r) rectangle ++(1,-1);}
\draw[xstep=1,ystep=1,gray!45,line width=0.12pt] (0,-2) grid (2,0);
\draw[line width=0.6pt] (0,0) rectangle (2,-2);
\node[font=\scriptsize,below,align=center] at (1.0,-2) {$H_2$};
\end{tikzpicture}
\hspace{0.15cm}{\large $\otimes$}\hspace{0.15cm}
\begin{tikzpicture}[x=0.400cm,y=0.400cm,baseline=(mid)]
\coordinate (mid) at (2.0,-2.0);
\foreach \c/\r in {1/0,2/0,3/0,2/1,3/2,1/3}{\fill[wplus] (\c,-\r) rectangle ++(1,-1);}
\foreach \c/\r in {0/1,3/1,0/2,1/2,0/3,2/3}{\fill[wminus] (\c,-\r) rectangle ++(1,-1);}
\draw[xstep=1,ystep=1,gray!45,line width=0.12pt] (0,-4) grid (4,0);
\draw[line width=0.6pt] (0,0) rectangle (4,-4);
\node[font=\scriptsize,below,align=center] at (2.0,-4) {$V=\W(4,3)$};
\end{tikzpicture}
\hspace{0.15cm}{\large $=$}\hspace{0.15cm}
\begin{tikzpicture}[x=0.320cm,y=0.320cm,baseline=(mid)]
\coordinate (mid) at (4.0,-4.0);
\foreach \c/\r in {1/0,2/0,3/0,5/0,6/0,7/0,2/1,6/1,3/2,7/2,1/3,5/3,1/4,2/4,3/4,2/5,4/5,7/5,3/6,4/6,5/6,1/7,4/7,6/7}{\fill[wplus] (\c,-\r) rectangle ++(1,-1);}
\foreach \c/\r in {0/1,3/1,4/1,7/1,0/2,1/2,4/2,5/2,0/3,2/3,4/3,6/3,5/4,6/4,7/4,0/5,3/5,6/5,0/6,1/6,7/6,0/7,2/7,5/7}{\fill[wminus] (\c,-\r) rectangle ++(1,-1);}
\draw[xstep=1,ystep=1,gray!45,line width=0.12pt] (0,-8) grid (8,0);
\draw[line width=0.6pt] (4,0) -- (4,-8);
\draw[line width=0.6pt] (0,-4) -- (8,-4);
\draw[line width=0.6pt] (0,0) rectangle (8,-8);
\node[font=\scriptsize,below,align=center] at (4.0,-8) {$\W(8,6)$};
\end{tikzpicture}

\medskip
\begin{tikzpicture}[x=0.400cm,y=0.400cm,baseline=(mid)]
\coordinate (mid) at (2.0,-2.0);
\foreach \c/\r in {1/0,2/0,3/0,2/1,3/2,1/3}{\fill[wplus] (\c,-\r) rectangle ++(1,-1);}
\foreach \c/\r in {0/1,3/1,0/2,1/2,0/3,2/3}{\fill[wminus] (\c,-\r) rectangle ++(1,-1);}
\draw[xstep=1,ystep=1,gray!45,line width=0.12pt] (0,-4) grid (4,0);
\draw[line width=0.6pt] (0,0) rectangle (4,-4);
\node[font=\scriptsize,below,align=center] at (2.0,-4) {$V$};
\end{tikzpicture}
\hspace{0.15cm}{\large $\otimes$}\hspace{0.15cm}
\begin{tikzpicture}[x=0.400cm,y=0.400cm,baseline=(mid)]
\coordinate (mid) at (2.0,-2.0);
\foreach \c/\r in {1/0,2/0,3/0,2/1,3/2,1/3}{\fill[wplus] (\c,-\r) rectangle ++(1,-1);}
\foreach \c/\r in {0/1,3/1,0/2,1/2,0/3,2/3}{\fill[wminus] (\c,-\r) rectangle ++(1,-1);}
\draw[xstep=1,ystep=1,gray!45,line width=0.12pt] (0,-4) grid (4,0);
\draw[line width=0.6pt] (0,0) rectangle (4,-4);
\node[font=\scriptsize,below,align=center] at (2.0,-4) {$V$};
\end{tikzpicture}
\hspace{0.15cm}{\large $=$}\hspace{0.15cm}
\begin{tikzpicture}[x=0.240cm,y=0.240cm,baseline=(mid)]
\coordinate (mid) at (8.0,-8.0);
\foreach \c/\r in {5/0,6/0,7/0,9/0,10/0,11/0,13/0,14/0,15/0,6/1,10/1,14/1,7/2,11/2,15/2,5/3,9/3,13/3,9/4,10/4,11/4,0/5,3/5,10/5,12/5,15/5,0/6,1/6,11/6,12/6,13/6,0/7,2/7,9/7,12/7,14/7,13/8,14/8,15/8,0/9,3/9,4/9,7/9,14/9,0/10,1/10,4/10,5/10,15/10,0/11,2/11,4/11,6/11,13/11,5/12,6/12,7/12,0/13,3/13,6/13,8/13,11/13,0/14,1/14,7/14,8/14,9/14,0/15,2/15,5/15,8/15,10/15}{\fill[wplus] (\c,-\r) rectangle ++(1,-1);}
\foreach \c/\r in {4/1,7/1,8/1,11/1,12/1,15/1,4/2,5/2,8/2,9/2,12/2,13/2,4/3,6/3,8/3,10/3,12/3,14/3,1/4,2/4,3/4,13/4,14/4,15/4,2/5,8/5,11/5,14/5,3/6,8/6,9/6,15/6,1/7,8/7,10/7,13/7,1/8,2/8,3/8,5/8,6/8,7/8,2/9,6/9,12/9,15/9,3/10,7/10,12/10,13/10,1/11,5/11,12/11,14/11,1/12,2/12,3/12,9/12,10/12,11/12,2/13,4/13,7/13,10/13,3/14,4/14,5/14,11/14,1/15,4/15,6/15,9/15}{\fill[wminus] (\c,-\r) rectangle ++(1,-1);}
\draw[xstep=1,ystep=1,gray!45,line width=0.12pt] (0,-16) grid (16,0);
\draw[line width=0.6pt] (4,0) -- (4,-16);
\draw[line width=0.6pt] (0,-4) -- (16,-4);
\draw[line width=0.6pt] (8,0) -- (8,-16);
\draw[line width=0.6pt] (0,-8) -- (16,-8);
\draw[line width=0.6pt] (12,0) -- (12,-16);
\draw[line width=0.6pt] (0,-12) -- (16,-12);
\draw[line width=0.6pt] (0,0) rectangle (16,-16);
\node[font=\scriptsize,below,align=center] at (8.0,-16) {$\W(16,9)$};
\end{tikzpicture}
\caption{Kronecker products as stored in the catalogue: $\W(8,6)=H_2\otimes V$ and $\W(16,9)=V\otimes V$, with $V=\W(4,3)$.}\label{fig:kron}
\end{figure}
\subsubsection*{Method 4: Paley constructions}\label{sec:paley}

The Paley constructions~\cite{paley} use the arithmetic of finite fields. We first collect the (standard) definitions and four facts.

\paragraph{Finite fields and the quadratic character.}
Let $q$ be a \emph{prime power}, that is, $q=p^e$ with $p$ a prime number and $e\ge1$ (for example $q=3,7,11,25,27,49$; the number $15=3\cdot5$ is not a prime power). A \emph{field} is a set with an addition and a multiplication in which one can add, subtract, multiply and divide by nonzero elements with the usual rules (the rational or the real numbers are fields). When $q$ is a prime power there is, up to renaming its elements, exactly one field with $q$ elements; it is written $\mathrm{GF}(q)$ (from \emph{Galois field}). This is a standard result of algebra that we use without proof. For prime $q$, $\mathrm{GF}(q)$ is simply $\{0,1,\dots,q-1\}$ with addition and multiplication modulo $q$ (division works because $q$ is prime: for example, for $q=7$, $3\cdot5=15\equiv1$, so $5$ is the inverse of $3$). For $q=p^e$ with $e\ge2$ (needed here only for $q=25,27,49$) the elements are the polynomials $c_0+c_1x+\dots+c_{e-1}x^{e-1}$ of degree $<e$ with coefficients $c_t\in\{0,\dots,p-1\}$ (modulo $p$); they are added coefficient by coefficient and multiplied as polynomials, the result then being replaced by its remainder on division by a fixed \emph{irreducible} polynomial $f$ of degree $e$ (a polynomial that is not a product of two polynomials of smaller degree), in the same way that for integers one takes the remainder modulo a prime. For example, for $q=25=5^2$ one may take $f=x^2-2$ (irreducible because $2$ is not a square modulo $5$): the elements are $a+bx$ with $a,b\in\{0,\dots,4\}$ and $x^2=2$, and $(1+x)(1+2x)=1+3x+2x^2=1+3x+4=3x$. (This only illustrates the arithmetic: here $x^2=2$, $x^4=4$ and $x^8=1$, so the class of $x$ has order $8$ and not $24$, and $x^2-2$ is \emph{not} a polynomial for which $x$ is a generator. The program searches by itself for a polynomial $f$ for which $x$ is a generator, in the sense below. Another choice gives an equivalent matrix, obtained by permuting rows and columns.) Here $q$ is odd, so $q-1$ is even. The nonzero elements of $\mathrm{GF}(q)$ form a cyclic group under multiplication (another standard fact that we do not prove), i.e.\ there is a \emph{generator} $g$ such that every nonzero element is $g^t$ for exactly one $t\in\{0,\dots,q-2\}$. An element is a \emph{square} if it equals $y^2$ for some $y$; the nonzero squares are exactly the even powers of $g$.

\begin{definition}
The \emph{quadratic character} $\chi$ of $\mathrm{GF}(q)$ is the function $\chi(0)=0$, $\chi(x)=1$ if $x$ is a nonzero square, and $\chi(x)=-1$ otherwise; in terms of the generator, $\chi(g^t)=(-1)^t$.
\end{definition}
For $q=7$ the nonzero squares are $1^2=1$, $2^2=4$, $3^2=2$ (and $4^2=2$, $5^2=4$, $6^2=1$ repeat them), so $\chi(1)=\chi(2)=\chi(4)=1$ and $\chi(3)=\chi(5)=\chi(6)=-1$.

\begin{lemma}\label{lem:chi}
(a) $\chi(xy)=\chi(x)\chi(y)$. (b) $\sum_{x}\chi(x)=0$ (the sum runs over all $q$ elements). (c) $\chi(-1)=1$ if $q\equiv1\pmod4$ and $\chi(-1)=-1$ if $q\equiv3\pmod4$. (d) For every $d\ne0$, $\sum_{x}\chi(x)\chi(x+d)=-1$.
\end{lemma}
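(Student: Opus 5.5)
The plan is to work entirely with the generator $g$ and the definition $\chi(g^t)=(-1)^t$, which the paper has already justified. Parts (a)–(c) follow from exponent parity. Part (d) then comes from a substitution that reduces it to (b).

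For (a): if $x=0$ or $y=0$, both sides vanish because $xy=0$. Otherwise I write $x=g^t$ and $y=g^u$, so $xy=g^{t+u}$. The exponent of $xy$ may exceed $q-2$, but it is congruent to $t+u$ modulo $q-1$. Since $q-1$ is even, its parity is that of $t+u$. Hence $\chi(xy)=(-1)^{t+u}=\chi(x)\chi(y)$.

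For (b): as $t$ runs over $0,\dots,q-2$, the value $(-1)^t$ equals $+1$ and $-1$ equally often, namely $(q-1)/2$ times each. Together with $\chi(0)=0$, the sum is zero.

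For (c): the element $g^{(q-1)/2}$ squares to $g^{q-1}=1$ and is not $1$, because $g$ has order $q-1$. In a field the equation $y^2=1$ has only the roots $\pm1$, so $-1=g^{(q-1)/2}$. Therefore $\chi(-1)=(-1)^{(q-1)/2}$. This equals $1$ exactly when $(q-1)/2$ is even, that is, when $q\equiv1\pmod4$.

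For (d), which is the only real step: the term $x=0$ contributes nothing, since $\chi(0)=0$. For $x\ne0$ we have $\chi(x)^2=1$, so $\chi(x)=\chi(x^{-1})$. Using (a), each term becomes
\[
\chi(x)\chi(x+d)=\chi(x^{-1})\chi(x+d)=\chi\bigl((x+d)x^{-1}\bigr)=\chi(1+dx^{-1}).
\]
Next I substitute $z=1+dx^{-1}$. Since $d\ne0$, the map $x\mapsto 1+dx^{-1}$ is a bijection from the nonzero elements onto all elements except $z=1$, because $z=1$ would force $dx^{-1}=0$. The sum therefore equals $\sum_{z\ne1}\chi(z)$. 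By (b) this is $0-\chi(1)=-1$.

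The main point needing care is this bijection in (d), in particular checking that exactly the value $z=1$ is missed and nothing else. Everything else is bookkeeping with exponents modulo the even number $q-1$.
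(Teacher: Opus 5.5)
Your proof is correct and follows the paper's argument step for step. Parts (a)--(c) use exponent parity modulo the even number $q-1$. Part (d) uses the reduction $\chi(x)\chi(x+d)=\chi(1+dx^{-1})$, then the fact that $x\mapsto 1+dx^{-1}$ misses exactly the value $1$, so (b) gives $-1$. The paper instead factors $x+d=x(1+dx^{-1})$ and uses $\chi(x)^2=1$ directly rather than $\chi(x)=\chi(x^{-1})$, which is only a cosmetic difference.
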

\begin{proof}
(a) If $x=g^t$ and $y=g^u$ nonzero then $xy=g^{t+u}$ and $(-1)^{t+u}=(-1)^t(-1)^u$ (exponents are taken modulo $q-1$, which is even, so the parity is well defined); if $x$ or $y$ is $0$ both sides are $0$. (b) Among the $q-1$ nonzero elements, exactly $(q-1)/2$ are even powers of $g$ and $(q-1)/2$ odd powers. (c) $-1$ is the only element $\ne1$ whose square is $1$, and $g^{(q-1)/2}$ has this property; so $-1=g^{(q-1)/2}$ and $\chi(-1)=(-1)^{(q-1)/2}$, which is $1$ iff $q\equiv1\pmod4$. (d) The term $x=0$ is $0$, and the term $x=-d$ is $0$. For $x\ne0$ write $x+d=x(1+dx^{-1})$; by (a), $\chi(x)\chi(x+d)=\chi(x)^2\chi(1+dx^{-1})=\chi(1+dx^{-1})$ since $\chi(x)^2=1$. As $x$ runs through the nonzero elements, $dx^{-1}$ runs through all nonzero elements, so $1+dx^{-1}$ runs through all elements except $1$. Hence the sum equals $\sum_{y\ne1}\chi(y)=0-\chi(1)=-1$ by (b).
\end{proof}

\paragraph{The matrix $Q$.}
Number the rows and columns of a $q\times q$ matrix by the elements of $\mathrm{GF}(q)$ (for prime $q$: $0,1,\dots,q-1$, in this order) and define $Q_{xy}=\chi(y-x)$. For prime $q$ the entry depends on $y-x$ modulo $q$ only, so $Q$ is the circulant of the sequence $(\chi(0),\chi(1),\dots,\chi(q-1))$; for $q=7$ this is $(0,1,1,-1,1,-1,-1)$.

\begin{lemma}\label{lem:Q}
(i) $Q\one=0$ and $\one\T Q=0$ (all row sums and column sums vanish). (ii) $QQ\T=qI_q-J$. (iii) $Q\T=\chi(-1)\,Q$: $Q$ is skew-symmetric ($Q\T=-Q$) if $q\equiv3\pmod4$ and symmetric if $q\equiv1\pmod4$.
\end{lemma}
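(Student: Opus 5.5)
All three parts of Lemma~\ref{lem:Q} follow entry by entry from Lemma~\ref{lem:chi}. Rows and columns are indexed by the elements of $\mathrm{GF}(q)$, and every sum runs over the whole field.

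\emph{Part (i).} The row sum of row $x$ is $\sum_y\chi(y-x)$. As $y$ runs through $\mathrm{GF}(q)$, so does $y-x$, so the sum equals $\sum_z\chi(z)=0$ by Lemma~\ref{lem:chi}(b). For the column sum of column $y$, namely $\sum_x\chi(y-x)$, the substitution $z=y-x$ works in the same way, since $x\mapsto y-x$ is also a bijection of the field.

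\emph{Part (ii).} The $(x,x')$ entry of $QQ\T$ is $\sum_y\chi(y-x)\chi(y-x')$.
\begin{itemize}
\item On the diagonal, $x=x'$, each term is $\chi(y-x)^2$. This equals $1$ for the $q-1$ values $y\ne x$ and $0$ for $y=x$, so the entry is $q-1$.
\item Off the diagonal, set $u=y-x$ and $d=x-x'\ne0$, so that $y-x'=u+d$. Then the entry becomes $\sum_u\chi(u)\chi(u+d)=-1$ by Lemma~\ref{lem:chi}(d).
\end{itemize}
So $QQ\T$ has $q-1$ on the diagonal and $-1$ elsewhere, which is exactly $qI_q-J$.

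\emph{Part (iii).} We have $(Q\T)_{xy}=Q_{yx}=\chi(x-y)=\chi\bigl((-1)(y-x)\bigr)=\chi(-1)\chi(y-x)$ by multiplicativity, Lemma~\ref{lem:chi}(a). Hence $Q\T=\chi(-1)Q$. Lemma~\ref{lem:chi}(c) then gives the two cases: $Q\T=-Q$ when $q\equiv3\pmod4$, and $Q\T=Q$ when $q\equiv1\pmod4$.

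The only step that needs care is the change of variables in (ii). One must check that $u=y-x$ is a bijection of $\mathrm{GF}(q)$ and that $d\ne0$ precisely when $x\ne x'$, so that Lemma~\ref{lem:chi}(d) applies. Everything else is immediate from Lemma~\ref{lem:chi}.
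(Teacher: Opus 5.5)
Your proof is correct and matches the paper's essentially line by line: the substitution $z=y-x$ for (i), the split into diagonal and off-diagonal entries with $d=x-x'$ and Lemma~\ref{lem:chi}(d) for (ii), and multiplicativity of $\chi$ for (iii). Your explicit appeal to Lemma~\ref{lem:chi}(c) for the two cases in (iii) spells out a step the paper leaves implicit.
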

\begin{proof}
(i) The sum of row $x$ is $\sum_y\chi(y-x)=\sum_z\chi(z)=0$ by Lemma~\ref{lem:chi}(b), as $y-x$ runs through all elements when $y$ does; the same for columns. (ii) $(QQ\T)_{xx'}=\sum_yQ_{xy}Q_{x'y}=\sum_y\chi(y-x)\chi(y-x')$. If $x=x'$ the sum counts the $q-1$ values of $y\ne x$ (where $\chi(y-x)^2=1$), giving $q-1$. If $x\ne x'$ put $z=y-x$ and $d=x-x'\ne0$: the sum is $\sum_z\chi(z)\chi(z+d)=-1$ by Lemma~\ref{lem:chi}(d). So $QQ\T$ has $q-1$ on the diagonal and $-1$ elsewhere, which is $qI_q-J$. (iii) $(Q\T)_{xy}=Q_{yx}=\chi(x-y)=\chi(-1)\chi(y-x)$ by Lemma~\ref{lem:chi}(a).
\end{proof}

\paragraph{Paley I.}
The program applies Paley I when $q=n-1$ is a prime power; since $n$ is divisible by $4$, then automatically $q\equiv3\pmod4$. Border $Q$ with a row of $+1$ and a column of $-1$:
\[
S=\begin{pmatrix}0&\one\T\\-\one&Q\end{pmatrix}\qquad((q+1)\times(q+1)).
\]
\begin{proposition}\label{prop:paley1}
For $q\equiv3\pmod4$: $S$ is a $\W(q+1,q)$, $S\T=-S$, and $I_{q+1}+S$ is a Hadamard matrix, i.e.\ a $\W(q+1,q+1)$.
\end{proposition}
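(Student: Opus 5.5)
The plan is a direct block computation using Lemma~\ref{lem:Q}. First, the entries: $0$, $\pm1$ on the border, and $Q_{xy}=\chi(y-x)\in\{-1,0,1\}$. The diagonal of $S$ is $0$ in position $(0,0)$ and $\chi(0)=0$ elsewhere. Hence every row has exactly $q$ nonzero entries, and $I_{q+1}+S$ has all entries $\pm1$.

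Next comes skew-symmetry. By the block transpose rule of Section~\ref{sec:blocks},
\[
S\T=\begin{pmatrix}0&-\one\T\\ \one&Q\T\end{pmatrix},
\]
and since $q\equiv3\pmod4$, Lemma~\ref{lem:Q}(iii) gives $Q\T=-Q$. Therefore $S\T=-S$.

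Then I will compute $SS\T$ blockwise:
\[
SS\T=\begin{pmatrix}0&\one\T\\-\one&Q\end{pmatrix}\begin{pmatrix}0&-\one\T\\ \one&Q\T\end{pmatrix}
=\begin{pmatrix}\one\T\one & \one\T Q\T\\ Q\one & \one\one\T+QQ\T\end{pmatrix}.
\]
Each block is handled by an earlier result:
\begin{itemize}
\item $\one\T\one=q$.
\item $\one\T Q\T=(Q\one)\T=0$, and $Q\one=0$, both by Lemma~\ref{lem:Q}(i).
\item $\one\one\T=J$, and $QQ\T=qI_q-J$ by Lemma~\ref{lem:Q}(ii), so the lower-right block is $qI_q$.
\end{itemize}
Hence $SS\T=qI_{q+1}$, and $S$ is a $\W(q+1,q)$.

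Finally, for the Hadamard claim I will expand
\[
(I+S)(I+S)\T=(I+S)(I-S)=I-S+S-S^2=I-S^2.
\]
Now $-S^2=SS\T=qI$, using $S\T=-S$. This gives $(q+1)I_{q+1}$.

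No step seems like a real obstacle. The only care needed is bookkeeping of signs in the border blocks, and checking that the cross terms vanish through the row and column sums of $Q$ rather than through any symmetry.
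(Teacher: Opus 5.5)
Your proof is correct and follows the paper's argument. It uses the same blockwise computation of $SS\T$ from Lemma~\ref{lem:Q}(i)--(ii) and the same check $S\T=-S$ via Lemma~\ref{lem:Q}(iii); for the Hadamard identity, your $(I+S)(I-S)=I-S^2$ is just a rearrangement of the paper's $I+S+S\T+SS\T$. One small point: ``the diagonal is zero, hence exactly $q$ nonzero entries per row'' also needs the off-diagonal entries to be nonzero, which holds because $\chi(z)=\pm1$ for $z\ne0$ (the paper instead reads this off the diagonal of $SS\T=qI_{q+1}$).
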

\begin{proof}
The entries are in $\{-1,0,1\}$. With $S\T=\begin{pmatrix}0&-\one\T\\\one&Q\T\end{pmatrix}$ the block rule gives
\[
\begin{aligned}
SS\T&=\begin{pmatrix}0\cdot0+\one\T\one&0\cdot(-\one\T)+\one\T Q\T\\(-\one)\cdot0+Q\one&(-\one)(-\one\T)+QQ\T\end{pmatrix}\\
&=\begin{pmatrix}q&(Q\one)\T\\Q\one&J+qI_q-J\end{pmatrix}=\begin{pmatrix}q&0\\0&qI_q\end{pmatrix}=qI_{q+1},\end{aligned}
\]
using $\one\T\one=q$, $\one\one\T=J$, $Q\one=0$ (Lemma~\ref{lem:Q}(i)) and $QQ\T=qI_q-J$ (ii). So $S$ is a $\W(q+1,q)$. Since $q\equiv3\pmod4$, $Q\T=-Q$ (Lemma~\ref{lem:Q}(iii)), and the borders are negatives of each other, so $S\T=-S$. Then
\[
(I+S)(I+S)\T=I+S+S\T+SS\T=I+S-S+qI=(q+1)I_{q+1}.
\]
$I+S$ has entries $\pm1$ (its diagonal is $1$, the off-diagonal entries are those of $S$ and are $\pm1$ because $S$ has $q$ nonzero entries per row, all off the diagonal), so it is a Hadamard matrix of order $q+1$.
\end{proof}
The program stores $S$ for $(n,k)=(q+1,q)$ (method \texttt{paley1}, $16$ pairs) and $I+S$ for $(n,k)=(q+1,q+1)$ (method \texttt{paley1\_h}, $8$ pairs: $n=12,20,28,44,60,68,84,108$, i.e.\ $q=11,19,27,43,59,67,83,107$). Note that the pair $(n,n)$ is only reached by this method if the earlier methods failed for it.

\emph{Example.} For $q=3$ the only nonzero square is $1$, so $\chi(0),\chi(1),\chi(2)=0,1,-1$, 
\[
Q=\begin{pmatrix}0&1&-1\\-1&0&1\\1&-1&0\end{pmatrix},\qquad S=\begin{pmatrix}0&1&1&1\\-1&0&1&-1\\-1&-1&0&1\\-1&1&-1&0\end{pmatrix}=\W(4,3).
\]
(Check: rows $1$ and $2$ of $S$ have dot product $(-1)(-1)+0\cdot(-1)+1\cdot0+(-1)\cdot1=0$; each row has three nonzero entries.) This $S$ is the stored $\W(4,3)$ used in Figures~\ref{fig:dsum1}--\ref{fig:kron}. For $q=7$ (character values above) one obtains the stored $\W(8,7)$, and for $q=11$ the stored $\W(12,11)=S$ and $\W(12,12)=I+S$ (Figure~\ref{fig:paley1}).

\begin{figure}[ht]
\centering
\begin{tikzpicture}[x=0.450cm,y=0.450cm,baseline=(mid)]
\coordinate (mid) at (1.5,-1.5);
\foreach \c/\r in {1/0,2/1,0/2}{\fill[wplus] (\c,-\r) rectangle ++(1,-1);}
\foreach \c/\r in {2/0,0/1,1/2}{\fill[wminus] (\c,-\r) rectangle ++(1,-1);}
\draw[xstep=1,ystep=1,gray!45,line width=0.12pt] (0,-3) grid (3,0);
\draw[line width=0.6pt] (0,0) rectangle (3,-3);
\node[font=\scriptsize,below,align=center] at (1.5,-3) {$Q$ ($q=3$)};
\end{tikzpicture}
\hspace{0.30cm}
\begin{tikzpicture}[x=0.400cm,y=0.400cm,baseline=(mid)]
\coordinate (mid) at (2.0,-2.0);
\foreach \c/\r in {1/0,2/0,3/0,2/1,3/2,1/3}{\fill[wplus] (\c,-\r) rectangle ++(1,-1);}
\foreach \c/\r in {0/1,3/1,0/2,1/2,0/3,2/3}{\fill[wminus] (\c,-\r) rectangle ++(1,-1);}
\draw[xstep=1,ystep=1,gray!45,line width=0.12pt] (0,-4) grid (4,0);
\draw[line width=0.6pt] (0,0) rectangle (4,-4);
\node[font=\scriptsize,below,align=center] at (2.0,-4) {$S=\W(4,3)$};
\end{tikzpicture}
\hspace{0.90cm}
\begin{tikzpicture}[x=0.300cm,y=0.300cm,baseline=(mid)]
\coordinate (mid) at (3.5,-3.5);
\foreach \c/\r in {1/0,2/0,4/0,2/1,3/1,5/1,3/2,4/2,6/2,0/3,4/3,5/3,1/4,5/4,6/4,0/5,2/5,6/5,0/6,1/6,3/6}{\fill[wplus] (\c,-\r) rectangle ++(1,-1);}
\foreach \c/\r in {3/0,5/0,6/0,0/1,4/1,6/1,0/2,1/2,5/2,1/3,2/3,6/3,0/4,2/4,3/4,1/5,3/5,4/5,2/6,4/6,5/6}{\fill[wminus] (\c,-\r) rectangle ++(1,-1);}
\draw[xstep=1,ystep=1,gray!45,line width=0.12pt] (0,-7) grid (7,0);
\draw[line width=0.6pt] (0,0) rectangle (7,-7);
\node[font=\scriptsize,below,align=center] at (3.5,-7) {$Q$ ($q=7$)};
\end{tikzpicture}
\hspace{0.30cm}
\begin{tikzpicture}[x=0.300cm,y=0.300cm,baseline=(mid)]
\coordinate (mid) at (4.0,-4.0);
\foreach \c/\r in {1/0,2/0,3/0,4/0,5/0,6/0,7/0,2/1,3/1,5/1,3/2,4/2,6/2,4/3,5/3,7/3,1/4,5/4,6/4,2/5,6/5,7/5,1/6,3/6,7/6,1/7,2/7,4/7}{\fill[wplus] (\c,-\r) rectangle ++(1,-1);}
\foreach \c/\r in {0/1,4/1,6/1,7/1,0/2,1/2,5/2,7/2,0/3,1/3,2/3,6/3,0/4,2/4,3/4,7/4,0/5,1/5,3/5,4/5,0/6,2/6,4/6,5/6,0/7,3/7,5/7,6/7}{\fill[wminus] (\c,-\r) rectangle ++(1,-1);}
\draw[xstep=1,ystep=1,gray!45,line width=0.12pt] (0,-8) grid (8,0);
\draw[line width=0.6pt] (0,0) rectangle (8,-8);
\node[font=\scriptsize,below,align=center] at (4.0,-8) {$S=\W(8,7)$};
\end{tikzpicture}

\medskip
\begin{tikzpicture}[x=0.300cm,y=0.300cm,baseline=(mid)]
\coordinate (mid) at (6.0,-6.0);
\foreach \c/\r in {1/0,2/0,3/0,4/0,5/0,6/0,7/0,8/0,9/0,10/0,11/0,2/1,4/1,5/1,6/1,10/1,3/2,5/2,6/2,7/2,11/2,1/3,4/3,6/3,7/3,8/3,2/4,5/4,7/4,8/4,9/4,3/5,6/5,8/5,9/5,10/5,4/6,7/6,9/6,10/6,11/6,1/7,5/7,8/7,10/7,11/7,1/8,2/8,6/8,9/8,11/8,1/9,2/9,3/9,7/9,10/9,2/10,3/10,4/10,8/10,11/10,1/11,3/11,4/11,5/11,9/11}{\fill[wplus] (\c,-\r) rectangle ++(1,-1);}
\foreach \c/\r in {0/1,3/1,7/1,8/1,9/1,11/1,0/2,1/2,4/2,8/2,9/2,10/2,0/3,2/3,5/3,9/3,10/3,11/3,0/4,1/4,3/4,6/4,10/4,11/4,0/5,1/5,2/5,4/5,7/5,11/5,0/6,1/6,2/6,3/6,5/6,8/6,0/7,2/7,3/7,4/7,6/7,9/7,0/8,3/8,4/8,5/8,7/8,10/8,0/9,4/9,5/9,6/9,8/9,11/9,0/10,1/10,5/10,6/10,7/10,9/10,0/11,2/11,6/11,7/11,8/11,10/11}{\fill[wminus] (\c,-\r) rectangle ++(1,-1);}
\draw[xstep=1,ystep=1,gray!45,line width=0.12pt] (0,-12) grid (12,0);
\draw[line width=0.6pt] (0,0) rectangle (12,-12);
\node[font=\scriptsize,below,align=center] at (6.0,-12) {$S=\W(12,11)$ ($q=11$)};
\end{tikzpicture}
\hspace{0.30cm}
\begin{tikzpicture}[x=0.300cm,y=0.300cm,baseline=(mid)]
\coordinate (mid) at (6.0,-6.0);
\foreach \c/\r in {0/0,1/0,2/0,3/0,4/0,5/0,6/0,7/0,8/0,9/0,10/0,11/0,1/1,2/1,4/1,5/1,6/1,10/1,2/2,3/2,5/2,6/2,7/2,11/2,1/3,3/3,4/3,6/3,7/3,8/3,2/4,4/4,5/4,7/4,8/4,9/4,3/5,5/5,6/5,8/5,9/5,10/5,4/6,6/6,7/6,9/6,10/6,11/6,1/7,5/7,7/7,8/7,10/7,11/7,1/8,2/8,6/8,8/8,9/8,11/8,1/9,2/9,3/9,7/9,9/9,10/9,2/10,3/10,4/10,8/10,10/10,11/10,1/11,3/11,4/11,5/11,9/11,11/11}{\fill[wplus] (\c,-\r) rectangle ++(1,-1);}
\foreach \c/\r in {0/1,3/1,7/1,8/1,9/1,11/1,0/2,1/2,4/2,8/2,9/2,10/2,0/3,2/3,5/3,9/3,10/3,11/3,0/4,1/4,3/4,6/4,10/4,11/4,0/5,1/5,2/5,4/5,7/5,11/5,0/6,1/6,2/6,3/6,5/6,8/6,0/7,2/7,3/7,4/7,6/7,9/7,0/8,3/8,4/8,5/8,7/8,10/8,0/9,4/9,5/9,6/9,8/9,11/9,0/10,1/10,5/10,6/10,7/10,9/10,0/11,2/11,6/11,7/11,8/11,10/11}{\fill[wminus] (\c,-\r) rectangle ++(1,-1);}
\draw[xstep=1,ystep=1,gray!45,line width=0.12pt] (0,-12) grid (12,0);
\draw[line width=0.6pt] (0,0) rectangle (12,-12);
\node[font=\scriptsize,below,align=center] at (6.0,-12) {$I+S=\W(12,12)$};
\end{tikzpicture}
\caption{Paley I. Top: $Q$ and $S=\W(4,3)$ for $q=3$; $Q$ and $S=\W(8,7)$ for $q=7$. Bottom: $S=\W(12,11)$ and $I+S=\W(12,12)$ for $q=11$. All $S$ and $I+S$ shown are stored matrices.}\label{fig:paley1}
\end{figure}

\paragraph{Paley II.}
The program applies Paley II only to pairs $(n,n)$ (with $n$ even) for which $q=n/2-1\ge3$ is a prime power with $q\equiv1\pmod4$, and only after methods 1--3 and Paley I have failed. Now $Q\T=Q$ (Lemma~\ref{lem:Q}(iii)); define the \emph{symmetric conference matrix} and the block matrix
\[
K=\begin{pmatrix}0&\one\T\\\one&Q\end{pmatrix},\qquad
H=\begin{pmatrix}K+I&K-I\\K-I&-K-I\end{pmatrix}\qquad(\text{each block of order }q+1).
\]
\begin{proposition}\label{prop:paley2}
For $q\equiv1\pmod4$: $K\T=K$, $K^2=qI_{q+1}$, and $H$ is a Hadamard matrix of order $n=2(q+1)$.
\end{proposition}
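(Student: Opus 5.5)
The plan is to mirror the proof of Proposition~\ref{prop:paley1}, using the block rule of Section~\ref{sec:blocks} and the three facts about $Q$ in Lemma~\ref{lem:Q}.

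\emph{Symmetry of $K$.} Transposing the block matrix swaps the two border blocks $\one\T$ and $\one$ and replaces $Q$ by $Q\T$. Since $q\equiv1\pmod4$, Lemma~\ref{lem:Q}(iii) gives $Q\T=Q$, so $K\T=K$.

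\emph{The identity $K^2=qI_{q+1}$.} Multiply $K$ by itself block by block:
\begin{itemize}
\item the top-left entry is $0\cdot0+\one\T\one=q$;
\item the off-diagonal blocks are $\one\T Q=(Q\T\one)\T=(Q\one)\T=0$ and $Q\one=0$, by Lemma~\ref{lem:Q}(i);
\item the bottom-right block is $\one\one\T+Q^2=J+QQ\T=J+qI_q-J=qI_q$, using $Q=Q\T$ and Lemma~\ref{lem:Q}(ii).
\end{itemize}
Together these give $K^2=qI_{q+1}$.

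\emph{Entries of $H$.} $K$ has zero diagonal and $\pm1$ entries off the diagonal: the border is all ones, and off the diagonal $\chi(y-x)\ne0$. Hence $K+I$, $K-I$ and $-K-I$ all have entries $\pm1$, and so does $H$.

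\emph{Orthogonality of $H$.} $H$ is symmetric because $K$ is, so $HH\T=H^2$. All blocks are polynomials in $K$, hence commute. Expanding blockwise:
\begin{itemize}
\item the $(1,1)$ block is $(K+I)^2+(K-I)^2=2K^2+2I=(2q+2)I$;
\item the $(1,2)$ block is $(K+I)(K-I)+(K-I)(-K-I)=(K^2-I)-(K^2-I)=0$;
\item the $(2,2)$ block is $(K-I)^2+(K+I)^2=(2q+2)I$.
\end{itemize}
Therefore $HH\T=2(q+1)I_{2(q+1)}=nI_n$, and $H$ is a Hadamard matrix of order $n$.

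There is no real obstacle. The only point needing care is the use of $Q\T=Q$: it lets $Q^2$ be replaced by $QQ\T$ in the corner block, and it makes $K$ symmetric, which is what allows $HH\T$ to be computed as $H^2$ with commuting blocks.
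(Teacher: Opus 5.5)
Your proof is correct and follows the paper's argument essentially step by step. You get $K\T=K$ from $Q\T=Q$, compute the square of $K$ blockwise via Lemma~\ref{lem:Q}(i)--(ii), and evaluate $HH\T=H^2$ block by block; the only differences are that you form $K^2$ directly using $Q^2=QQ\T$ where the paper forms $KK\T$ and then uses $K\T=K$, and that you leave out the $(2,1)$ block, which vanishes by the same one-line expansion or because $H^2$ is symmetric.
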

\begin{proof}
$K\T=K$ because $Q\T=Q$ and the two borders are equal. As in the proof of Proposition~\ref{prop:paley1},
\[
KK\T=\begin{pmatrix}\one\T\one&\one\T Q\T\\Q\one&\one\one\T+QQ\T\end{pmatrix}=\begin{pmatrix}q&0\\0&J+qI_q-J\end{pmatrix}=qI_{q+1},
\]
so $K^2=KK\T=qI_{q+1}$. The matrix $K$ has zero diagonal and entries $\pm1$ elsewhere, so $K+I$ and $K-I$ have all entries equal to $\pm1$ (diagonal $+1$, respectively $-1$): $H$ is a $\pm1$ matrix. Since $K$ is symmetric, so is $H$, hence $HH\T=H^2$, whose blocks are
\begin{align*}
(1,1)&:\ (K+I)^2+(K-I)^2=2K^2+2I=2(q+1)I,\\
(1,2)&:\ (K+I)(K-I)+(K-I)(-K-I)=(K^2-I)+(-K^2+I)=0,\\
(2,1)&:\ (K-I)(K+I)+(-K-I)(K-I)=(K^2-I)+(-K^2+I)=0,\\
(2,2)&:\ (K-I)^2+(-K-I)^2=2K^2+2I=2(q+1)I.
\end{align*}
Hence $HH\T=2(q+1)I_{2(q+1)}$.
\end{proof}
Paley II is used for exactly four pairs, $(n,n)$ with $n=36,52,76,100$ and $q=17,25,37,49$ (method \texttt{paley2}); for the other even $n=2(q+1)$ with $q\equiv1\pmod4$ the pair $(n,n)$ is already solved by an earlier method (for instance for $q=5$, $n=12$, by Paley I with $q=11$). 

\emph{Example.} $\W(36,36)$, $q=17$. Here $17$ is prime, the nonzero squares modulo $17$ are $1,2,4,8,9,13,15,16$ and $\chi(-1)=\chi(16)=1$. Figure~\ref{fig:paley2} shows the $18\times18$ matrix $K$ and the stored Hadamard matrix $\W(36,36)=H$.

\begin{figure}[ht]
\centering
\begin{tikzpicture}[x=0.200cm,y=0.200cm,baseline=(mid)]
\coordinate (mid) at (9.0,-9.0);
\foreach \c/\r in {1/0,2/0,3/0,4/0,5/0,6/0,7/0,8/0,9/0,10/0,11/0,12/0,13/0,14/0,15/0,16/0,17/0,0/1,2/1,3/1,5/1,9/1,10/1,14/1,16/1,17/1,0/2,1/2,3/2,4/2,6/2,10/2,11/2,15/2,17/2,0/3,1/3,2/3,4/3,5/3,7/3,11/3,12/3,16/3,0/4,2/4,3/4,5/4,6/4,8/4,12/4,13/4,17/4,0/5,1/5,3/5,4/5,6/5,7/5,9/5,13/5,14/5,0/6,2/6,4/6,5/6,7/6,8/6,10/6,14/6,15/6,0/7,3/7,5/7,6/7,8/7,9/7,11/7,15/7,16/7,0/8,4/8,6/8,7/8,9/8,10/8,12/8,16/8,17/8,0/9,1/9,5/9,7/9,8/9,10/9,11/9,13/9,17/9,0/10,1/10,2/10,6/10,8/10,9/10,11/10,12/10,14/10,0/11,2/11,3/11,7/11,9/11,10/11,12/11,13/11,15/11,0/12,3/12,4/12,8/12,10/12,11/12,13/12,14/12,16/12,0/13,4/13,5/13,9/13,11/13,12/13,14/13,15/13,17/13,0/14,1/14,5/14,6/14,10/14,12/14,13/14,15/14,16/14,0/15,2/15,6/15,7/15,11/15,13/15,14/15,16/15,17/15,0/16,1/16,3/16,7/16,8/16,12/16,14/16,15/16,17/16,0/17,1/17,2/17,4/17,8/17,9/17,13/17,15/17,16/17}{\fill[wplus] (\c,-\r) rectangle ++(1,-1);}
\foreach \c/\r in {4/1,6/1,7/1,8/1,11/1,12/1,13/1,15/1,5/2,7/2,8/2,9/2,12/2,13/2,14/2,16/2,6/3,8/3,9/3,10/3,13/3,14/3,15/3,17/3,1/4,7/4,9/4,10/4,11/4,14/4,15/4,16/4,2/5,8/5,10/5,11/5,12/5,15/5,16/5,17/5,1/6,3/6,9/6,11/6,12/6,13/6,16/6,17/6,1/7,2/7,4/7,10/7,12/7,13/7,14/7,17/7,1/8,2/8,3/8,5/8,11/8,13/8,14/8,15/8,2/9,3/9,4/9,6/9,12/9,14/9,15/9,16/9,3/10,4/10,5/10,7/10,13/10,15/10,16/10,17/10,1/11,4/11,5/11,6/11,8/11,14/11,16/11,17/11,1/12,2/12,5/12,6/12,7/12,9/12,15/12,17/12,1/13,2/13,3/13,6/13,7/13,8/13,10/13,16/13,2/14,3/14,4/14,7/14,8/14,9/14,11/14,17/14,1/15,3/15,4/15,5/15,8/15,9/15,10/15,12/15,2/16,4/16,5/16,6/16,9/16,10/16,11/16,13/16,3/17,5/17,6/17,7/17,10/17,11/17,12/17,14/17}{\fill[wminus] (\c,-\r) rectangle ++(1,-1);}
\draw[xstep=1,ystep=1,gray!45,line width=0.12pt] (0,-18) grid (18,0);
\draw[line width=0.6pt] (0,0) rectangle (18,-18);
\node[font=\scriptsize,below,align=center] at (9.0,-18) {$K$ ($q=17$)};
\end{tikzpicture}
\hspace{0.80cm}
\begin{tikzpicture}[x=0.170cm,y=0.170cm,baseline=(mid)]
\coordinate (mid) at (18.0,-18.0);
\foreach \c/\r in {0/0,1/0,2/0,3/0,4/0,5/0,6/0,7/0,8/0,9/0,10/0,11/0,12/0,13/0,14/0,15/0,16/0,17/0,19/0,20/0,21/0,22/0,23/0,24/0,25/0,26/0,27/0,28/0,29/0,30/0,31/0,32/0,33/0,34/0,35/0,0/1,1/1,2/1,3/1,5/1,9/1,10/1,14/1,16/1,17/1,18/1,20/1,21/1,23/1,27/1,28/1,32/1,34/1,35/1,0/2,1/2,2/2,3/2,4/2,6/2,10/2,11/2,15/2,17/2,18/2,19/2,21/2,22/2,24/2,28/2,29/2,33/2,35/2,0/3,1/3,2/3,3/3,4/3,5/3,7/3,11/3,12/3,16/3,18/3,19/3,20/3,22/3,23/3,25/3,29/3,30/3,34/3,0/4,2/4,3/4,4/4,5/4,6/4,8/4,12/4,13/4,17/4,18/4,20/4,21/4,23/4,24/4,26/4,30/4,31/4,35/4,0/5,1/5,3/5,4/5,5/5,6/5,7/5,9/5,13/5,14/5,18/5,19/5,21/5,22/5,24/5,25/5,27/5,31/5,32/5,0/6,2/6,4/6,5/6,6/6,7/6,8/6,10/6,14/6,15/6,18/6,20/6,22/6,23/6,25/6,26/6,28/6,32/6,33/6,0/7,3/7,5/7,6/7,7/7,8/7,9/7,11/7,15/7,16/7,18/7,21/7,23/7,24/7,26/7,27/7,29/7,33/7,34/7,0/8,4/8,6/8,7/8,8/8,9/8,10/8,12/8,16/8,17/8,18/8,22/8,24/8,25/8,27/8,28/8,30/8,34/8,35/8,0/9,1/9,5/9,7/9,8/9,9/9,10/9,11/9,13/9,17/9,18/9,19/9,23/9,25/9,26/9,28/9,29/9,31/9,35/9,0/10,1/10,2/10,6/10,8/10,9/10,10/10,11/10,12/10,14/10,18/10,19/10,20/10,24/10,26/10,27/10,29/10,30/10,32/10,0/11,2/11,3/11,7/11,9/11,10/11,11/11,12/11,13/11,15/11,18/11,20/11,21/11,25/11,27/11,28/11,30/11,31/11,33/11,0/12,3/12,4/12,8/12,10/12,11/12,12/12,13/12,14/12,16/12,18/12,21/12,22/12,26/12,28/12,29/12,31/12,32/12,34/12,0/13,4/13,5/13,9/13,11/13,12/13,13/13,14/13,15/13,17/13,18/13,22/13,23/13,27/13,29/13,30/13,32/13,33/13,35/13,0/14,1/14,5/14,6/14,10/14,12/14,13/14,14/14,15/14,16/14,18/14,19/14,23/14,24/14,28/14,30/14,31/14,33/14,34/14,0/15,2/15,6/15,7/15,11/15,13/15,14/15,15/15,16/15,17/15,18/15,20/15,24/15,25/15,29/15,31/15,32/15,34/15,35/15,0/16,1/16,3/16,7/16,8/16,12/16,14/16,15/16,16/16,17/16,18/16,19/16,21/16,25/16,26/16,30/16,32/16,33/16,35/16,0/17,1/17,2/17,4/17,8/17,9/17,13/17,15/17,16/17,17/17,18/17,19/17,20/17,22/17,26/17,27/17,31/17,33/17,34/17,1/18,2/18,3/18,4/18,5/18,6/18,7/18,8/18,9/18,10/18,11/18,12/18,13/18,14/18,15/18,16/18,17/18,0/19,2/19,3/19,5/19,9/19,10/19,14/19,16/19,17/19,22/19,24/19,25/19,26/19,29/19,30/19,31/19,33/19,0/20,1/20,3/20,4/20,6/20,10/20,11/20,15/20,17/20,23/20,25/20,26/20,27/20,30/20,31/20,32/20,34/20,0/21,1/21,2/21,4/21,5/21,7/21,11/21,12/21,16/21,24/21,26/21,27/21,28/21,31/21,32/21,33/21,35/21,0/22,2/22,3/22,5/22,6/22,8/22,12/22,13/22,17/22,19/22,25/22,27/22,28/22,29/22,32/22,33/22,34/22,0/23,1/23,3/23,4/23,6/23,7/23,9/23,13/23,14/23,20/23,26/23,28/23,29/23,30/23,33/23,34/23,35/23,0/24,2/24,4/24,5/24,7/24,8/24,10/24,14/24,15/24,19/24,21/24,27/24,29/24,30/24,31/24,34/24,35/24,0/25,3/25,5/25,6/25,8/25,9/25,11/25,15/25,16/25,19/25,20/25,22/25,28/25,30/25,31/25,32/25,35/25,0/26,4/26,6/26,7/26,9/26,10/26,12/26,16/26,17/26,19/26,20/26,21/26,23/26,29/26,31/26,32/26,33/26,0/27,1/27,5/27,7/27,8/27,10/27,11/27,13/27,17/27,20/27,21/27,22/27,24/27,30/27,32/27,33/27,34/27,0/28,1/28,2/28,6/28,8/28,9/28,11/28,12/28,14/28,21/28,22/28,23/28,25/28,31/28,33/28,34/28,35/28,0/29,2/29,3/29,7/29,9/29,10/29,12/29,13/29,15/29,19/29,22/29,23/29,24/29,26/29,32/29,34/29,35/29,0/30,3/30,4/30,8/30,10/30,11/30,13/30,14/30,16/30,19/30,20/30,23/30,24/30,25/30,27/30,33/30,35/30,0/31,4/31,5/31,9/31,11/31,12/31,14/31,15/31,17/31,19/31,20/31,21/31,24/31,25/31,26/31,28/31,34/31,0/32,1/32,5/32,6/32,10/32,12/32,13/32,15/32,16/32,20/32,21/32,22/32,25/32,26/32,27/32,29/32,35/32,0/33,2/33,6/33,7/33,11/33,13/33,14/33,16/33,17/33,19/33,21/33,22/33,23/33,26/33,27/33,28/33,30/33,0/34,1/34,3/34,7/34,8/34,12/34,14/34,15/34,17/34,20/34,22/34,23/34,24/34,27/34,28/34,29/34,31/34,0/35,1/35,2/35,4/35,8/35,9/35,13/35,15/35,16/35,21/35,23/35,24/35,25/35,28/35,29/35,30/35,32/35}{\fill[wplus] (\c,-\r) rectangle ++(1,-1);}
\foreach \c/\r in {18/0,4/1,6/1,7/1,8/1,11/1,12/1,13/1,15/1,19/1,22/1,24/1,25/1,26/1,29/1,30/1,31/1,33/1,5/2,7/2,8/2,9/2,12/2,13/2,14/2,16/2,20/2,23/2,25/2,26/2,27/2,30/2,31/2,32/2,34/2,6/3,8/3,9/3,10/3,13/3,14/3,15/3,17/3,21/3,24/3,26/3,27/3,28/3,31/3,32/3,33/3,35/3,1/4,7/4,9/4,10/4,11/4,14/4,15/4,16/4,19/4,22/4,25/4,27/4,28/4,29/4,32/4,33/4,34/4,2/5,8/5,10/5,11/5,12/5,15/5,16/5,17/5,20/5,23/5,26/5,28/5,29/5,30/5,33/5,34/5,35/5,1/6,3/6,9/6,11/6,12/6,13/6,16/6,17/6,19/6,21/6,24/6,27/6,29/6,30/6,31/6,34/6,35/6,1/7,2/7,4/7,10/7,12/7,13/7,14/7,17/7,19/7,20/7,22/7,25/7,28/7,30/7,31/7,32/7,35/7,1/8,2/8,3/8,5/8,11/8,13/8,14/8,15/8,19/8,20/8,21/8,23/8,26/8,29/8,31/8,32/8,33/8,2/9,3/9,4/9,6/9,12/9,14/9,15/9,16/9,20/9,21/9,22/9,24/9,27/9,30/9,32/9,33/9,34/9,3/10,4/10,5/10,7/10,13/10,15/10,16/10,17/10,21/10,22/10,23/10,25/10,28/10,31/10,33/10,34/10,35/10,1/11,4/11,5/11,6/11,8/11,14/11,16/11,17/11,19/11,22/11,23/11,24/11,26/11,29/11,32/11,34/11,35/11,1/12,2/12,5/12,6/12,7/12,9/12,15/12,17/12,19/12,20/12,23/12,24/12,25/12,27/12,30/12,33/12,35/12,1/13,2/13,3/13,6/13,7/13,8/13,10/13,16/13,19/13,20/13,21/13,24/13,25/13,26/13,28/13,31/13,34/13,2/14,3/14,4/14,7/14,8/14,9/14,11/14,17/14,20/14,21/14,22/14,25/14,26/14,27/14,29/14,32/14,35/14,1/15,3/15,4/15,5/15,8/15,9/15,10/15,12/15,19/15,21/15,22/15,23/15,26/15,27/15,28/15,30/15,33/15,2/16,4/16,5/16,6/16,9/16,10/16,11/16,13/16,20/16,22/16,23/16,24/16,27/16,28/16,29/16,31/16,34/16,3/17,5/17,6/17,7/17,10/17,11/17,12/17,14/17,21/17,23/17,24/17,25/17,28/17,29/17,30/17,32/17,35/17,0/18,18/18,19/18,20/18,21/18,22/18,23/18,24/18,25/18,26/18,27/18,28/18,29/18,30/18,31/18,32/18,33/18,34/18,35/18,1/19,4/19,6/19,7/19,8/19,11/19,12/19,13/19,15/19,18/19,19/19,20/19,21/19,23/19,27/19,28/19,32/19,34/19,35/19,2/20,5/20,7/20,8/20,9/20,12/20,13/20,14/20,16/20,18/20,19/20,20/20,21/20,22/20,24/20,28/20,29/20,33/20,35/20,3/21,6/21,8/21,9/21,10/21,13/21,14/21,15/21,17/21,18/21,19/21,20/21,21/21,22/21,23/21,25/21,29/21,30/21,34/21,1/22,4/22,7/22,9/22,10/22,11/22,14/22,15/22,16/22,18/22,20/22,21/22,22/22,23/22,24/22,26/22,30/22,31/22,35/22,2/23,5/23,8/23,10/23,11/23,12/23,15/23,16/23,17/23,18/23,19/23,21/23,22/23,23/23,24/23,25/23,27/23,31/23,32/23,1/24,3/24,6/24,9/24,11/24,12/24,13/24,16/24,17/24,18/24,20/24,22/24,23/24,24/24,25/24,26/24,28/24,32/24,33/24,1/25,2/25,4/25,7/25,10/25,12/25,13/25,14/25,17/25,18/25,21/25,23/25,24/25,25/25,26/25,27/25,29/25,33/25,34/25,1/26,2/26,3/26,5/26,8/26,11/26,13/26,14/26,15/26,18/26,22/26,24/26,25/26,26/26,27/26,28/26,30/26,34/26,35/26,2/27,3/27,4/27,6/27,9/27,12/27,14/27,15/27,16/27,18/27,19/27,23/27,25/27,26/27,27/27,28/27,29/27,31/27,35/27,3/28,4/28,5/28,7/28,10/28,13/28,15/28,16/28,17/28,18/28,19/28,20/28,24/28,26/28,27/28,28/28,29/28,30/28,32/28,1/29,4/29,5/29,6/29,8/29,11/29,14/29,16/29,17/29,18/29,20/29,21/29,25/29,27/29,28/29,29/29,30/29,31/29,33/29,1/30,2/30,5/30,6/30,7/30,9/30,12/30,15/30,17/30,18/30,21/30,22/30,26/30,28/30,29/30,30/30,31/30,32/30,34/30,1/31,2/31,3/31,6/31,7/31,8/31,10/31,13/31,16/31,18/31,22/31,23/31,27/31,29/31,30/31,31/31,32/31,33/31,35/31,2/32,3/32,4/32,7/32,8/32,9/32,11/32,14/32,17/32,18/32,19/32,23/32,24/32,28/32,30/32,31/32,32/32,33/32,34/32,1/33,3/33,4/33,5/33,8/33,9/33,10/33,12/33,15/33,18/33,20/33,24/33,25/33,29/33,31/33,32/33,33/33,34/33,35/33,2/34,4/34,5/34,6/34,9/34,10/34,11/34,13/34,16/34,18/34,19/34,21/34,25/34,26/34,30/34,32/34,33/34,34/34,35/34,3/35,5/35,6/35,7/35,10/35,11/35,12/35,14/35,17/35,18/35,19/35,20/35,22/35,26/35,27/35,31/35,33/35,34/35,35/35}{\fill[wminus] (\c,-\r) rectangle ++(1,-1);}
\draw[xstep=1,ystep=1,gray!45,line width=0.12pt] (0,-36) grid (36,0);
\draw[line width=0.6pt] (18,0) -- (18,-36);
\draw[line width=0.6pt] (0,-18) -- (36,-18);
\draw[line width=0.6pt] (0,0) rectangle (36,-36);
\node[font=\scriptsize,below,align=center] at (18.0,-36) {$\W(36,36)=\begin{pmatrix}K+I&K-I\\K-I&-K-I\end{pmatrix}$};
\end{tikzpicture}
\caption{Paley II for $q=17$: the symmetric conference matrix $K$ (order $18$, auxiliary) and the stored matrix $\W(36,36)=\begin{pmatrix}K+I&K-I\\K-I&-K-I\end{pmatrix}$.}\label{fig:paley2}
\end{figure}
\subsection{Stage 2: the Goethals--Seidel search}\label{sec:stage2}

A pair $(n,k)$ that none of the methods of Stage~1 solves is searched for among the Goethals--Seidel matrices of Section~\ref{sec:gs}, with $m=n/4$: the program looks for four sequences $A,B,C,D$ of length $m$ with $P(0)=k$ and $P(s)=0$ for $1\le s\le m-1$ (Corollary~\ref{cor:paf}), and then builds $G$ from~\eqref{eq:G}. There are two branches, according to the size of $m$ (Figure~\ref{fig:flow}): for $m\le6$ ($n\le24$) the search is exhaustive; for $m\ge7$ ($n\ge28$) the profile filter is applied first and then simulated annealing. The threshold is the program's option \texttt{--exhaustive-max-m}, default $6$; the numbers $24$ and $527$ of Table~\ref{tab:methods} correspond to this default.

\subsubsection{Exhaustive search for $m\le 6$}\label{sec:exh}

There are only $3^m$ ternary sequences of length $m$ ($729$ for $m=6$). The program computes the vector $\bigl(\PAF_x(0),\dots,\PAF_x(m-1)\bigr)$ for every sequence $x$ and, for every unordered pair $\{A,B\}$ (with $A=B$ allowed; $729\cdot730/2=266{,}085$ pairs for $m=6$), the vector $P_{AB}=\PAF_A+\PAF_B$. These vectors are sorted. Since $P=P_{AB}+P_{CD}$, a Goethals--Seidel matrix $\W(4m,k)$ exists \emph{if and only if} there are two pairs with $P_{AB}+P_{CD}=(k,0,\dots,0)$, that is, $P_{CD}=(k,0,\dots,0)-P_{AB}$; for each pair $\{A,B\}$ the program looks this vector up in the sorted table by binary search (``meet in the middle''). If no pair matches, there is \emph{no} Goethals--Seidel matrix $\W(4m,k)$: the negative answer is a proof (status \texttt{none\_gs}). For $(n,k)=(16,15)$, $m=4$, no match exists, in agreement with Proposition~\ref{prop:parity}; this is the only pair of the catalogue with $n\le24$ in this situation.

\emph{Example.} $\W(12,7)$, $m=3$ (the example of Section~\ref{sec:ex127}). For $A=(-1,-1,1)$ and $B=(-1,0,0)$ we have $P_{AB}=(3,-1,-1)+(1,0,0)=(4,-1,-1)$, and for $C=(-1,-1,0)$ and $D=(-1,0,0)$ we have $P_{CD}=(2,1,1)+(1,0,0)=(3,1,1)$. Indeed $P_{AB}+P_{CD}=(7,0,0)$: the lookup of $(7,0,0)-P_{AB}=(3,1,1)$ in the table of the pair vectors succeeds and returns the pair $\{C,D\}$. These are the sequences stored in the catalogue (the program returns the first match it finds).

\subsubsection{The profile filter for $m\ge7$}\label{sec:filter}

Before annealing, the program lists the admissible sum tuples (Section~\ref{sec:profile}): all $(s_0,s_1,s_2,s_3)$ with $m\ge s_0\ge s_1\ge s_2\ge s_3\ge0$ and $\sum s_i^2=k$, for which weights $w_i=s_i+2u_i^{-}$ with integers $0\le u_i^{-}\le\lfloor(m-s_i)/2\rfloor$ and $\sum w_i=k$ exist; equivalently, $k-\sum s_i$ is even (automatic, since $s_i^2\equiv s_i\pmod2$) and
\[
0\le k-\sum_is_i\le2\sum_i\Bigl\lfloor\frac{m-s_i}{2}\Bigr\rfloor .
\]
If the list is empty, there is no admissible profile and hence no Goethals--Seidel matrix: status \texttt{none\_gs}. This happens for exactly five pairs of the catalogue, $(40,39)$, $(56,55)$, $(64,63)$, $(88,87)$ and $(96,95)$ ($m=10,14,16,22,24$); together with $(16,15)$, found by the exhaustive search, these are the six pairs of Proposition~\ref{prop:parity} that are not Paley orders. For instance, for $(40,39)$, $m=10$:
\begin{center}
\begin{tabular}{cccc}\toprule $(s_0,s_1,s_2,s_3)$ & $\sum s_i$ & $k-\sum s_i$ & $2\sum_i\lfloor(m-s_i)/2\rfloor$ \\ \midrule (6,1,1,1) & 9 & 30 & 28\\ (5,3,2,1) & 11 & 28 & 26\\ \bottomrule\end{tabular}

\small (the tuples with $\sum s_i^2=39$; each would need $k-\sum s_i\le2\sum\lfloor(m-s_i)/2\rfloor$)
\end{center}
so the list is empty. In the other cases the list is non-empty, and it is the starting point of the annealing. For $(68,55)$, $m=17$, there are three admissible sum tuples, $(7,2,1,1)$, $(6,3,3,1)$ and $(5,5,2,1)$; the stored matrix has sums $(5,5,1,2)$, a rearrangement of the third one, and weights $(17,13,9,16)$. At the other extreme, the filter leaves only three admissible sum tuples for $(108,95)$ and five for $(108,103)$ (a sum tuple may admit several assignments of the weights $w_i$, hence several full profiles).

\subsubsection{Simulated annealing for $m\ge7$}\label{sec:anneal}

For $m\ge7$ the number of candidates, $3^{4m}$, is far too large for an exhaustive search. \emph{Simulated annealing}~\cite{kgv} is a randomized search that explores candidates by small random changes, always accepting changes that improve a \emph{cost} and sometimes accepting changes that worsen it, with a probability that decreases during the run (the ``temperature'' $T$ is lowered), so that the search can escape from local minima. Here the cost measures how far $P(1),\dots,P(m-1)$ are from $0$. The program does the following for each pair; one pass through steps 1--6 is called an \emph{attempt}.
\begin{enumerate}
\item \emph{Sample a profile.} Choose $(s_0,\dots,s_3)$ uniformly at random among the admissible sum tuples. Write $w_i=s_i+2u_i^{-}$, where $u_i^{-}$ is the number of entries $-1$ in the $i$-th sequence, with $0\le u_i^{-}\le\lfloor(m-s_i)/2\rfloor$. Distribute the $\tfrac12(k-\sum_is_i)$ units of $u^{-}$ one at a time, each to a randomly chosen sequence that still has capacity, and finally shuffle the order of the four pairs $(w_i,s_i)$. (Uniform choice of the sum tuple does not make all full profiles equally likely.)
\item \emph{Initial state.} For each sequence choose $w_i$ positions at random, put $+1$ on $(w_i+s_i)/2$ of them and $-1$ on the others, and $0$ elsewhere.
\item \emph{Cost.} $E=\sum_{s=1}^{m-1}|P(s)|$. By Corollary~\ref{cor:paf}, $E=0$ together with the profile (which forces $P(0)=k$) is equivalent to $GG\T=kI$. The vector $P$ is computed once with the fast Fourier transform and afterwards updated incrementally. Changing one entry $x_p$ by $\eta$ (for instance from $-1$ to $0$, $\eta=+1$) changes $P(s)$ by $\eta\,(x_{p+s}+x_{p-s})$ for $s\ne0$ and $P(0)$ by $\eta(2x_p+\eta)$, indices modulo $m$ and $x_p$ the entry \emph{before} the change. Indeed, in $\PAF_x(s)=\sum_tx_tx_{t+s}$ only the two terms $t=p$ and $t=p-s$ contain $x_p$ (for $s\ne0$), contributing $x_px_{p+s}+x_{p-s}x_p$; and $\PAF_x(0)=\sum x_t^2$ changes from $x_p^2$ to $(x_p+\eta)^2$. Each update costs $O(m)$ operations; no $n\times n$ matrix is formed during the search.
\item \emph{Moves.} Choose a sequence containing at least two distinct values, choose two positions holding different values, and swap them (two single-entry changes applied one after the other). With probability $0.2$ a second swap, in a randomly chosen sequence, is composed with the first. A swap preserves the pair $(w_i,s_i)$ of its sequence, so the walk never leaves the chosen profile.
\item \emph{Acceptance and cooling.} A move that changes the cost by $\Delta E$ is accepted with probability $\min\{1,e^{-\Delta E/T}\}$ (always when $\Delta E\le0$); a rejected move is undone. The temperature decreases geometrically from $T_0=3$ to $T_1=0.25$ over $N=\max(20000,300m)$ moves ($N=20000$ for all $m\le27$).
\item \emph{Restart.} If $E=0$ is not reached within $N$ moves, the state is discarded and a new attempt starts at step~1 (with a new random profile), until a solution is found or a time or attempt limit given as program options is reached (the default time limit is $6\cdot10^6$ seconds per pair, effectively unbounded). A solution is rebuilt as the array $G$ and passed to the exact check.
\end{enumerate}
These are the default parameters of the program (the options \texttt{--T0}, \texttt{--T1}, \texttt{--iters}); the optional warm start from $\W(n,k-2)$ (\texttt{--seed-prev}) was not used for any pair of the catalogue (the log records \texttt{seeded=False} for all annealed pairs). The failure of the annealing to find a solution is not evidence that none exists.

\emph{Example of a move.} Let $m=5$ and $x=(1,-1,0,1,1)$, with $\PAF_x=(4,1,-1,-1,1)$. Swapping the entries in positions $0$ and $1$ (values $1$ and $-1$) gives $x'=(-1,1,0,1,1)$. As two single-entry changes: position $0$ changes by $\eta=-2$ (from $1$ to $-1$), and then position $1$ changes by $\eta=+2$ (from $-1$ to $1$). The first change adds $\eta(x_{p+s}+x_{p-s})$, $p=0$, to $\PAF(s)$ for $s=1,\dots,4$, namely $(0,-2,-2,0)$, computed on $x$; the second adds $(-2,4,4,-2)$, computed on the intermediate state $(-1,-1,0,1,1)$ with $p=1$; and $\PAF(0)$ changes by $\eta(2x_p+\eta)$ twice, in total by $0$. The result is $\PAF_{x'}=(4,-1,1,1,-1)$, which is also what one gets by recomputing from the definition.

\emph{Example of a search: $\W(68,55)$.} The catalogue matrix for $(n,k)=(68,55)$, $m=17$, was found on the fourth attempt of the original run ($1.65$~s). Its profile is $(w_i,s_i)=((17,5),(13,5),(9,1),(16,2))$ and $\sum w_i=55=\sum s_i^2=25+25+1+4$; the sequences are
\begin{center}
\mono{A = +-+---++++++-++-+}\\ \mono{B = +00++++--+--+00++}\\ \mono{C = +--0++00000-+0-0+}\\ \mono{D = +++---+-+-++0+-+-}
\end{center}
(written with $+,-,0$ for $1,-1,0$), and $P(s)=0$ for all $s\ne0$, $P(0)=55$. The log of the original run does not record the course of the cost. To \emph{illustrate} an annealing search, Figure~\ref{fig:trace} shows the cost $E$ along a re-run of the same annealing code (same moves, acceptance rule and schedule, instrumented to record $E$) for the same pair, with Python's random seed fixed to 1; this seed is the first seed in a scan $1,2,\dots$ whose run needed three or four attempts, to keep the figure short. It is therefore an illustration, not the log of the catalogue run, and the matrix it finds is a different one. The first attempt samples the profile $(w_i,s_i)=((12,2),(17,1),(9,1),(17,7))$ and starts with $E=84$; after $20{,}000$ moves it has $E=4$ and is discarded. The second and third attempts end with $E=12$ and $E=4$. The fourth, with profile $((16,6),(11,3),(17,3),(11,1))$, starts with $E=80$ and reaches $E=0$ after $3{,}700$ moves.

\begin{figure}[ht]
\centering
\begin{tikzpicture}[font=\scriptsize]
\draw[->] (0,0) -- (11.10,0) node[right] {moves};
\draw[->] (0,0) -- (0,4.20) node[above] {cost $E$};
\draw (0,0.000) -- (-0.08,0.000) node[left] {0};
\draw[gray!30,line width=0.2pt] (0,0.000) -- (11.00,0.000);
\draw (0,0.800) -- (-0.08,0.800) node[left] {20};
\draw[gray!30,line width=0.2pt] (0,0.800) -- (11.00,0.800);
\draw (0,1.600) -- (-0.08,1.600) node[left] {40};
\draw[gray!30,line width=0.2pt] (0,1.600) -- (11.00,1.600);
\draw (0,2.400) -- (-0.08,2.400) node[left] {60};
\draw[gray!30,line width=0.2pt] (0,2.400) -- (11.00,2.400);
\draw (0,3.200) -- (-0.08,3.200) node[left] {80};
\draw[gray!30,line width=0.2pt] (0,3.200) -- (11.00,3.200);
\draw (0,4.000) -- (-0.08,4.000) node[left] {100};
\draw[gray!30,line width=0.2pt] (0,4.000) -- (11.00,4.000);
\draw[tblue,line width=0.55pt] plot coordinates {(0.000,3.360) (0.009,1.440) (0.017,1.440) (0.026,1.440) (0.035,1.120) (0.043,0.800) (0.052,1.120) (0.060,0.640) (0.069,0.800) (0.078,0.320) (0.086,0.960) (0.095,0.960) (0.104,0.320) (0.112,0.320) (0.121,0.320) (0.130,0.320) (0.138,0.480) (0.147,0.320) (0.155,0.320) (0.164,0.320) (0.173,0.320) (0.181,0.320) (0.190,0.320) (0.199,0.320) (0.207,0.480) (0.216,0.960) (0.224,0.320) (0.233,0.320) (0.242,0.480) (0.250,1.120) (0.259,0.800) (0.268,0.640) (0.276,0.640) (0.285,0.320) (0.294,0.640) (0.302,0.960) (0.311,0.800) (0.319,0.640) (0.328,0.800) (0.337,0.640) (0.345,0.640) (0.354,0.480) (0.363,0.480) (0.371,0.640) (0.380,0.800) (0.389,0.800) (0.397,0.480) (0.406,0.800) (0.414,0.640) (0.423,0.640) (0.432,0.960) (0.440,0.960) (0.449,0.960) (0.458,0.640) (0.466,0.800) (0.475,0.480) (0.484,0.640) (0.492,0.640) (0.501,0.640) (0.509,0.480) (0.518,0.640) (0.527,0.800) (0.535,0.800) (0.544,0.480) (0.553,0.480) (0.561,0.480) (0.570,0.480) (0.578,0.480) (0.587,0.480) (0.596,0.640) (0.604,0.640) (0.622,0.480) (0.639,0.640) (0.648,0.640) (0.656,0.480) (0.665,0.480) (0.673,0.480) (0.682,0.480) (0.691,0.480) (0.699,0.320) (0.708,0.320) (0.717,0.960) (0.725,0.800) (0.734,0.640) (0.743,0.640) (0.751,0.640) (0.760,0.480) (0.768,0.480) (0.777,0.320) (0.786,0.320) (0.794,0.320) (0.803,0.320) (0.812,0.320) (0.820,0.320) (0.829,0.320) (0.838,0.480) (0.846,0.480) (0.855,0.480) (0.863,0.480) (0.872,0.320) (0.881,0.320) (0.889,0.320) (0.898,0.320) (0.907,0.320) (0.915,0.320) (0.924,0.320) (0.932,0.320) (0.941,0.480) (0.950,0.320) (0.958,0.320) (0.967,0.320) (0.976,0.320) (0.984,0.320) (0.993,0.320) (1.002,0.160) (1.010,0.320) (1.019,0.160) (1.027,0.160) (1.036,0.160) (1.045,0.160) (1.053,0.160) (1.062,0.160) (1.071,0.160) (1.079,0.160) (1.088,0.160) (1.097,0.160) (1.105,0.160) (1.114,0.320) (1.122,0.320) (1.131,0.320) (1.140,0.320) (1.148,0.480) (1.157,0.480) (1.166,0.480) (1.174,0.480) (1.183,0.480) (1.192,0.480) (1.200,0.480) (1.209,0.800) (1.217,0.480) (1.226,0.480) (1.235,0.480) (1.243,0.480) (1.252,0.480) (1.261,0.480) (1.269,0.480) (1.278,0.480) (1.286,0.320) (1.295,0.320) (1.304,0.320) (1.312,0.480) (1.321,0.320) (1.330,0.320) (1.338,0.320) (1.347,0.320) (1.356,0.800) (1.364,0.640) (1.373,0.480) (1.381,0.480) (1.390,0.480) (1.399,0.480) (1.407,0.480) (1.416,0.320) (1.425,0.320) (1.433,0.320) (1.442,0.320) (1.451,0.320) (1.459,0.320) (1.468,0.320) (1.476,0.320) (1.485,0.320) (1.494,0.320) (1.502,0.320) (1.511,0.320) (1.520,0.320) (1.528,0.320) (1.537,0.320) (1.546,0.320) (1.554,0.320) (1.563,0.320) (1.571,0.320) (1.580,0.320) (1.589,0.320) (1.597,0.320) (1.606,0.320) (1.615,0.320) (1.623,0.320) (1.632,0.320) (1.641,0.320) (1.649,0.320) (1.658,0.320) (1.666,0.320) (1.675,0.320) (1.684,0.320) (1.692,0.320) (1.701,0.320) (1.710,0.320) (1.718,0.320) (1.727,0.320) (1.735,0.320) (1.744,0.320) (1.753,0.320) (1.761,0.320) (1.770,0.320) (1.779,0.320) (1.787,0.320) (1.796,0.320) (1.805,0.320) (1.813,0.320) (1.822,0.320) (1.830,0.320) (1.839,0.320) (1.848,0.320) (1.856,0.320) (1.865,0.320) (1.874,0.320) (1.882,0.320) (1.891,0.320) (1.900,0.320) (1.908,0.320) (1.917,0.320) (1.925,0.320) (1.934,0.320) (1.943,0.320) (1.951,0.320) (1.960,0.320) (1.969,0.320) (1.977,0.320) (1.986,0.320) (1.995,0.320) (2.003,0.320) (2.012,0.320) (2.020,0.320) (2.029,0.320) (2.038,0.320) (2.046,0.320) (2.055,0.320) (2.064,0.320) (2.072,0.320) (2.081,0.320) (2.089,0.320) (2.098,0.320) (2.107,0.320) (2.115,0.320) (2.124,0.320) (2.133,0.320) (2.141,0.320) (2.150,0.320) (2.159,0.320) (2.167,0.320) (2.176,0.320) (2.184,0.320) (2.193,0.320) (2.202,0.320) (2.210,0.320) (2.219,0.320) (2.228,0.320) (2.236,0.320) (2.245,0.320) (2.254,0.320) (2.262,0.320) (2.271,0.320) (2.279,0.320) (2.288,0.320) (2.297,0.320) (2.305,0.160) (2.314,0.160) (2.323,0.160) (2.331,0.160) (2.340,0.160) (2.349,0.160) (2.357,0.160) (2.366,0.160) (2.374,0.160) (2.383,0.160) (2.392,0.160) (2.400,0.160) (2.409,0.160) (2.418,0.160) (2.426,0.160) (2.435,0.160) (2.443,0.160) (2.452,0.160) (2.461,0.160) (2.469,0.160) (2.478,0.160) (2.487,0.160) (2.495,0.160) (2.504,0.160) (2.513,0.160) (2.521,0.160) (2.530,0.160) (2.538,0.160) (2.547,0.160) (2.556,0.160) (2.564,0.160) (2.573,0.160) (2.582,0.160) (2.590,0.160) (2.599,0.160) (2.608,0.160) (2.616,0.160) (2.625,0.160) (2.633,0.160) (2.642,0.160) (2.651,0.160) (2.659,0.160) (2.668,0.160) (2.677,0.160) (2.685,0.160) (2.694,0.160) (2.703,0.160) (2.711,0.160) (2.720,0.160) (2.728,0.160) (2.737,0.160) (2.746,0.160) (2.754,0.160) (2.763,0.160) (2.772,0.160) (2.780,0.160) (2.789,0.160) (2.797,0.160) (2.806,0.160) (2.815,0.160) (2.823,0.160) (2.832,0.160) (2.841,0.160) (2.849,0.160) (2.858,0.160) (2.867,0.160) (2.875,0.160) (2.884,0.160) (2.892,0.160) (2.901,0.160) (2.910,0.160) (2.918,0.160) (2.927,0.160) (2.936,0.160) (2.944,0.160) (2.953,0.160) (2.962,0.160) (2.970,0.160) (2.979,0.160) (2.987,0.160) (2.996,0.160) (3.005,0.160) (3.013,0.160) (3.022,0.160) (3.031,0.160) (3.039,0.160) (3.048,0.160) (3.057,0.160) (3.065,0.160) (3.074,0.160) (3.082,0.160) (3.091,0.160) (3.100,0.160) (3.108,0.160) (3.117,0.160) (3.126,0.160) (3.134,0.160) (3.143,0.160) (3.151,0.160) (3.160,0.160) (3.169,0.160) (3.177,0.160) (3.186,0.160) (3.195,0.160) (3.203,0.160) (3.212,0.160) (3.221,0.160) (3.229,0.160) (3.238,0.160) (3.246,0.160) (3.255,0.160) (3.264,0.160) (3.272,0.160) (3.281,0.160) (3.290,0.160) (3.298,0.160) (3.307,0.160) (3.316,0.160) (3.324,0.160) (3.333,0.160) (3.341,0.160) (3.350,0.160) (3.359,0.160) (3.367,0.160) (3.376,0.160) (3.385,0.160) (3.393,0.160) (3.402,0.160) (3.411,0.160) (3.419,0.160) (3.428,0.160) (3.436,0.160) (3.445,0.160) (3.454,0.160)};
\node[tblue,above] at (1.73,4.05) {attempt 1};
\draw[dashed,gray] (3.454,0) -- (3.454,4.00);
\draw[torange,line width=0.55pt] plot coordinates {(3.454,2.720) (3.462,1.120) (3.471,1.120) (3.480,2.080) (3.488,0.480) (3.497,0.800) (3.505,0.640) (3.514,0.800) (3.523,0.800) (3.531,0.640) (3.540,0.640) (3.549,0.960) (3.557,0.640) (3.566,0.640) (3.575,0.640) (3.583,1.280) (3.592,1.120) (3.600,1.280) (3.609,0.640) (3.618,0.640) (3.626,0.640) (3.635,0.480) (3.644,0.640) (3.652,0.800) (3.661,0.480) (3.670,0.640) (3.678,0.960) (3.687,1.280) (3.695,0.800) (3.704,0.800) (3.713,0.480) (3.721,0.640) (3.730,0.640) (3.739,0.320) (3.747,0.480) (3.756,1.120) (3.765,0.640) (3.773,0.960) (3.782,0.480) (3.790,0.960) (3.799,0.640) (3.808,0.800) (3.816,0.960) (3.825,0.640) (3.834,0.640) (3.842,0.640) (3.851,0.640) (3.859,0.640) (3.868,0.640) (3.877,0.640) (3.885,0.320) (3.894,0.480) (3.903,0.480) (3.911,0.800) (3.920,0.640) (3.929,0.640) (3.937,0.640) (3.946,0.640) (3.954,0.640) (3.963,0.640) (3.972,0.480) (3.980,0.480) (3.989,0.480) (3.998,0.480) (4.006,0.480) (4.015,0.640) (4.024,0.640) (4.032,0.480) (4.041,0.640) (4.049,0.480) (4.058,0.480) (4.067,0.480) (4.075,0.480) (4.084,0.480) (4.093,0.480) (4.101,0.640) (4.110,0.480) (4.119,0.640) (4.127,0.480) (4.136,0.640) (4.144,0.480) (4.153,0.640) (4.162,0.640) (4.170,0.640) (4.179,0.480) (4.188,0.480) (4.196,0.480) (4.205,0.640) (4.214,0.800) (4.222,0.640) (4.231,0.480) (4.239,0.480) (4.248,0.480) (4.257,0.480) (4.265,0.480) (4.274,0.480) (4.283,0.480) (4.291,0.640) (4.300,0.640) (4.308,0.640) (4.317,0.640) (4.326,0.640) (4.334,0.640) (4.343,0.480) (4.352,0.800) (4.360,0.960) (4.369,0.800) (4.378,0.480) (4.386,0.640) (4.395,0.640) (4.403,0.480) (4.412,0.640) (4.421,0.640) (4.429,0.640) (4.438,0.480) (4.447,0.480) (4.455,0.480) (4.464,0.480) (4.473,0.480) (4.481,0.480) (4.490,0.480) (4.498,0.480) (4.507,0.480) (4.516,0.640) (4.524,0.640) (4.533,0.800) (4.542,0.640) (4.550,0.480) (4.559,0.640) (4.568,0.640) (4.576,0.640) (4.585,0.640) (4.593,0.640) (4.602,0.640) (4.611,0.640) (4.619,0.480) (4.628,0.480) (4.637,0.480) (4.645,0.480) (4.654,0.480) (4.662,0.480) (4.671,0.640) (4.680,0.640) (4.688,0.640) (4.697,0.640) (4.706,0.640) (4.714,0.640) (4.723,0.640) (4.732,0.640) (4.740,0.640) (4.749,0.640) (4.757,0.640) (4.766,0.640) (4.775,0.640) (4.783,0.480) (4.792,0.480) (4.801,0.480) (4.809,0.480) (4.818,0.480) (4.827,0.480) (4.835,0.480) (4.844,0.480) (4.852,0.480) (4.861,0.480) (4.870,0.480) (4.878,0.480) (4.887,0.480) (4.896,0.480) (4.904,0.480) (4.913,0.480) (4.922,0.480) (4.930,0.480) (4.939,0.480) (4.947,0.480) (4.956,0.480) (4.965,0.480) (4.973,0.480) (4.982,0.480) (4.991,0.480) (4.999,0.480) (5.008,0.480) (5.016,0.480) (5.025,0.480) (5.034,0.480) (5.042,0.480) (5.051,0.480) (5.060,0.480) (5.068,0.480) (5.077,0.480) (5.086,0.480) (5.094,0.480) (5.103,0.480) (5.111,0.480) (5.120,0.480) (5.129,0.480) (5.137,0.480) (5.146,0.480) (5.155,0.480) (5.163,0.480) (5.172,0.480) (5.181,0.480) (5.189,0.480) (5.198,0.480) (5.206,0.480) (5.215,0.480) (5.224,0.480) (5.232,0.480) (5.241,0.480) (5.250,0.480) (5.258,0.480) (5.267,0.480) (5.276,0.480) (5.284,0.480) (5.293,0.480) (5.301,0.480) (5.310,0.480) (5.319,0.480) (5.327,0.480) (5.336,0.480) (5.345,0.480) (5.353,0.480) (5.362,0.480) (5.370,0.480) (5.379,0.480) (5.388,0.480) (5.396,0.480) (5.405,0.480) (5.414,0.480) (5.422,0.640) (5.431,0.480) (5.440,0.480) (5.448,0.480) (5.457,0.480) (5.465,0.480) (5.474,0.480) (5.483,0.480) (5.491,0.480) (5.500,0.480) (5.509,0.480) (5.517,0.480) (5.526,0.480) (5.535,0.480) (5.543,0.480) (5.552,0.480) (5.560,0.480) (5.569,0.480) (5.578,0.480) (5.586,0.480) (5.595,0.480) (5.604,0.480) (5.612,0.480) (5.621,0.480) (5.630,0.480) (5.638,0.480) (5.647,0.480) (5.655,0.480) (5.664,0.480) (5.673,0.480) (5.681,0.480) (5.690,0.480) (5.699,0.480) (5.707,0.480) (5.724,0.480) (5.733,0.480) (5.742,0.480) (5.750,0.480) (5.759,0.480) (5.768,0.480) (5.776,0.480) (5.785,0.480) (5.794,0.480) (5.802,0.480) (5.811,0.480) (5.819,0.480) (5.828,0.480) (5.837,0.480) (5.845,0.480) (5.854,0.480) (5.863,0.480) (5.871,0.480) (5.880,0.480) (5.889,0.480) (5.897,0.480) (5.906,0.480) (5.914,0.480) (5.923,0.480) (5.932,0.480) (5.940,0.480) (5.949,0.480) (5.958,0.480) (5.966,0.480) (5.975,0.480) (5.984,0.480) (5.992,0.480) (6.001,0.480) (6.009,0.480) (6.018,0.480) (6.027,0.480) (6.035,0.480) (6.044,0.480) (6.053,0.480) (6.061,0.480) (6.070,0.480) (6.078,0.480) (6.087,0.480) (6.096,0.480) (6.104,0.480) (6.113,0.480) (6.122,0.480) (6.130,0.480) (6.139,0.480) (6.148,0.480) (6.156,0.480) (6.165,0.480) (6.173,0.480) (6.182,0.480) (6.191,0.480) (6.199,0.480) (6.208,0.480) (6.217,0.480) (6.225,0.480) (6.234,0.480) (6.243,0.480) (6.251,0.480) (6.260,0.480) (6.268,0.480) (6.277,0.480) (6.286,0.480) (6.294,0.480) (6.303,0.480) (6.312,0.480) (6.320,0.480) (6.329,0.480) (6.338,0.480) (6.346,0.480) (6.355,0.480) (6.363,0.480) (6.372,0.480) (6.381,0.480) (6.389,0.480) (6.398,0.480) (6.407,0.480) (6.415,0.480) (6.424,0.480) (6.432,0.480) (6.441,0.480) (6.450,0.480) (6.458,0.480) (6.467,0.480) (6.476,0.480) (6.484,0.480) (6.493,0.480) (6.502,0.480) (6.510,0.480) (6.519,0.480) (6.527,0.480) (6.536,0.480) (6.545,0.480) (6.553,0.480) (6.562,0.480) (6.571,0.480) (6.579,0.480) (6.588,0.480) (6.597,0.480) (6.605,0.480) (6.614,0.480) (6.622,0.480) (6.631,0.480) (6.640,0.480) (6.648,0.480) (6.657,0.480) (6.666,0.480) (6.674,0.480) (6.683,0.480) (6.692,0.480) (6.700,0.480) (6.709,0.480) (6.717,0.480) (6.726,0.480) (6.735,0.480) (6.743,0.480) (6.752,0.480) (6.761,0.480) (6.769,0.480) (6.778,0.480) (6.786,0.480) (6.795,0.480) (6.804,0.480) (6.812,0.480) (6.821,0.480) (6.830,0.480) (6.838,0.480) (6.847,0.480) (6.856,0.480) (6.864,0.480) (6.873,0.480) (6.881,0.480) (6.890,0.480) (6.899,0.480) (6.907,0.480)};
\node[torange,above] at (5.18,4.05) {attempt 2};
\draw[dashed,gray] (6.907,0) -- (6.907,4.00);
\draw[tgreen,line width=0.55pt] plot coordinates {(6.907,4.000) (6.916,0.480) (6.925,0.960) (6.933,1.120) (6.942,0.800) (6.951,1.120) (6.959,0.960) (6.968,1.280) (6.976,1.120) (6.985,0.960) (6.994,1.120) (7.002,0.800) (7.011,0.480) (7.020,0.640) (7.028,0.160) (7.037,0.160) (7.046,0.160) (7.054,0.160) (7.063,0.160) (7.071,0.640) (7.080,1.280) (7.089,0.960) (7.097,0.960) (7.106,0.480) (7.115,0.640) (7.123,0.640) (7.132,0.640) (7.141,0.960) (7.149,0.640) (7.158,0.480) (7.166,0.480) (7.175,0.800) (7.184,0.960) (7.192,0.800) (7.201,0.800) (7.210,0.800) (7.218,0.480) (7.227,0.480) (7.235,0.480) (7.244,0.480) (7.253,0.800) (7.261,0.800) (7.270,0.480) (7.279,0.480) (7.287,0.480) (7.296,0.800) (7.305,0.640) (7.313,0.480) (7.322,0.480) (7.330,0.480) (7.339,0.480) (7.348,0.320) (7.356,0.320) (7.365,0.320) (7.374,0.320) (7.382,0.320) (7.391,0.320) (7.400,0.320) (7.408,0.320) (7.417,0.480) (7.425,0.480) (7.434,0.640) (7.443,0.640) (7.451,0.480) (7.460,0.480) (7.469,0.480) (7.477,0.480) (7.486,0.480) (7.495,0.480) (7.503,0.640) (7.512,0.480) (7.520,0.480) (7.529,0.480) (7.538,0.480) (7.546,0.480) (7.555,0.160) (7.564,0.160) (7.572,0.160) (7.581,0.160) (7.589,0.160) (7.598,0.160) (7.607,0.320) (7.615,0.480) (7.624,0.160) (7.633,0.160) (7.641,0.160) (7.650,0.160) (7.659,0.160) (7.667,0.160) (7.676,0.160) (7.684,0.160) (7.693,0.160) (7.702,0.160) (7.710,0.160) (7.719,0.160) (7.728,0.160) (7.736,0.160) (7.745,0.160) (7.754,0.160) (7.762,0.160) (7.771,0.160) (7.779,0.160) (7.788,0.160) (7.797,0.160) (7.805,0.160) (7.814,0.160) (7.823,0.160) (7.831,0.160) (7.840,0.160) (7.849,0.160) (7.857,0.160) (7.866,0.160) (7.874,0.160) (7.883,0.160) (7.892,0.160) (7.900,0.160) (7.909,0.160) (7.918,0.160) (7.926,0.160) (7.935,0.160) (7.943,0.160) (7.952,0.160) (7.961,0.160) (7.969,0.160) (7.978,0.160) (7.987,0.160) (7.995,0.160) (8.004,0.160) (8.013,0.160) (8.021,0.160) (8.030,0.160) (8.038,0.320) (8.047,0.160) (8.056,0.160) (8.064,0.160) (8.073,0.160) (8.082,0.160) (8.090,0.160) (8.099,0.160) (8.108,0.160) (8.116,0.160) (8.125,0.160) (8.133,0.160) (8.142,0.160) (8.151,0.160) (8.159,0.160) (8.168,0.160) (8.177,0.160) (8.185,0.160) (8.194,0.160) (8.203,0.160) (8.211,0.160) (8.220,0.160) (8.228,0.160) (8.237,0.160) (8.246,0.160) (8.254,0.160) (8.263,0.160) (8.272,0.160) (8.280,0.160) (8.289,0.160) (8.297,0.160) (8.306,0.160) (8.315,0.160) (8.323,0.160) (8.332,0.160) (8.341,0.160) (8.349,0.160) (8.358,0.160) (8.367,0.160) (8.375,0.160) (8.384,0.160) (8.392,0.160) (8.401,0.160) (8.410,0.160) (8.418,0.160) (8.427,0.160) (8.436,0.160) (8.444,0.160) (8.453,0.160) (8.462,0.160) (8.470,0.160) (8.479,0.160) (8.487,0.160) (8.496,0.160) (8.505,0.160) (8.513,0.160) (8.522,0.160) (8.531,0.160) (8.539,0.160) (8.548,0.160) (8.557,0.160) (8.565,0.160) (8.574,0.160) (8.582,0.160) (8.591,0.160) (8.600,0.160) (8.608,0.160) (8.617,0.160) (8.626,0.160) (8.634,0.160) (8.643,0.160) (8.651,0.160) (8.660,0.160) (8.669,0.160) (8.677,0.160) (8.686,0.160) (8.695,0.160) (8.703,0.160) (8.712,0.160) (8.721,0.160) (8.729,0.160) (8.738,0.160) (8.746,0.160) (8.755,0.160) (8.764,0.160) (8.772,0.160) (8.781,0.160) (8.790,0.160) (8.798,0.160) (8.807,0.160) (8.816,0.160) (8.824,0.160) (8.833,0.160) (8.841,0.160) (8.850,0.160) (8.859,0.160) (8.867,0.160) (8.876,0.160) (8.885,0.160) (8.893,0.160) (8.902,0.160) (8.911,0.160) (8.919,0.160) (8.928,0.160) (8.936,0.160) (8.945,0.160) (8.954,0.160) (8.962,0.160) (8.971,0.160) (8.980,0.160) (8.988,0.160) (8.997,0.160) (9.005,0.160) (9.014,0.160) (9.023,0.160) (9.031,0.160) (9.040,0.160) (9.049,0.160) (9.057,0.160) (9.066,0.160) (9.075,0.160) (9.083,0.160) (9.092,0.160) (9.100,0.160) (9.109,0.160) (9.118,0.160) (9.126,0.160) (9.135,0.160) (9.144,0.160) (9.152,0.160) (9.161,0.160) (9.170,0.160) (9.178,0.160) (9.187,0.160) (9.195,0.160) (9.204,0.160) (9.213,0.160) (9.221,0.160) (9.230,0.160) (9.239,0.160) (9.247,0.160) (9.256,0.160) (9.265,0.160) (9.273,0.160) (9.282,0.160) (9.290,0.160) (9.299,0.160) (9.308,0.160) (9.316,0.160) (9.325,0.160) (9.334,0.160) (9.342,0.160) (9.351,0.160) (9.359,0.160) (9.368,0.160) (9.377,0.160) (9.385,0.160) (9.394,0.160) (9.403,0.160) (9.411,0.160) (9.420,0.160) (9.429,0.160) (9.437,0.160) (9.446,0.160) (9.454,0.160) (9.463,0.160) (9.472,0.160) (9.480,0.160) (9.489,0.160) (9.498,0.160) (9.506,0.160) (9.515,0.160) (9.524,0.160) (9.532,0.160) (9.541,0.160) (9.549,0.160) (9.558,0.160) (9.567,0.160) (9.575,0.160) (9.584,0.160) (9.593,0.160) (9.601,0.160) (9.610,0.160) (9.619,0.160) (9.627,0.160) (9.636,0.160) (9.644,0.160) (9.653,0.160) (9.662,0.160) (9.670,0.160) (9.679,0.160) (9.688,0.160) (9.696,0.160) (9.705,0.160) (9.714,0.160) (9.722,0.160) (9.731,0.160) (9.739,0.160) (9.748,0.160) (9.757,0.160) (9.765,0.160) (9.774,0.160) (9.783,0.160) (9.791,0.160) (9.800,0.160) (9.808,0.160) (9.817,0.160) (9.826,0.160) (9.834,0.160) (9.843,0.160) (9.852,0.160) (9.860,0.160) (9.869,0.160) (9.878,0.160) (9.886,0.160) (9.895,0.160) (9.903,0.160) (9.912,0.160) (9.921,0.160) (9.929,0.160) (9.938,0.160) (9.947,0.160) (9.955,0.160) (9.964,0.160) (9.973,0.160) (9.981,0.160) (9.990,0.160) (9.998,0.160) (10.007,0.160) (10.016,0.160) (10.024,0.160) (10.033,0.160) (10.042,0.160) (10.050,0.160) (10.059,0.160) (10.068,0.160) (10.076,0.160) (10.085,0.160) (10.093,0.160) (10.102,0.160) (10.111,0.160) (10.119,0.160) (10.128,0.160) (10.137,0.160) (10.145,0.160) (10.154,0.160) (10.162,0.160) (10.171,0.160) (10.180,0.160) (10.188,0.160) (10.197,0.160) (10.206,0.160) (10.214,0.160) (10.223,0.160) (10.232,0.160) (10.240,0.160) (10.249,0.160) (10.257,0.160) (10.266,0.160) (10.275,0.160) (10.283,0.160) (10.292,0.160) (10.301,0.160) (10.309,0.160) (10.318,0.160) (10.327,0.160) (10.335,0.160) (10.344,0.160) (10.352,0.160) (10.361,0.160)};
\node[tgreen,above] at (8.63,4.05) {attempt 3};
\draw[dashed,gray] (10.361,0) -- (10.361,4.00);
\draw[tred,line width=0.55pt] plot coordinates {(10.361,3.200) (10.370,1.600) (10.378,0.960) (10.387,0.480) (10.396,0.320) (10.404,0.320) (10.413,0.320) (10.422,0.320) (10.430,0.320) (10.439,0.320) (10.447,0.960) (10.456,0.640) (10.465,0.800) (10.473,0.960) (10.482,0.480) (10.491,0.960) (10.499,1.440) (10.508,0.960) (10.516,0.960) (10.525,0.960) (10.534,1.120) (10.542,0.960) (10.551,0.800) (10.560,0.800) (10.568,0.640) (10.577,0.160) (10.586,0.160) (10.594,0.160) (10.603,0.160) (10.611,0.160) (10.620,0.160) (10.629,0.160) (10.637,0.160) (10.646,0.160) (10.655,0.160) (10.663,0.640) (10.672,0.800) (10.681,0.640) (10.689,0.640) (10.698,0.480) (10.706,0.480) (10.715,0.480) (10.724,0.480) (10.732,0.320) (10.741,0.320) (10.750,0.800) (10.758,0.480) (10.767,0.480) (10.776,0.640) (10.784,0.320) (10.793,0.320) (10.801,0.320) (10.810,0.320) (10.819,0.320) (10.827,0.320) (10.836,0.320) (10.845,0.640) (10.853,0.480) (10.862,0.480) (10.870,0.480) (10.879,0.800) (10.888,0.640) (10.896,0.480) (10.905,0.640) (10.914,0.640) (10.922,0.960) (10.931,0.640) (10.940,0.800) (10.948,0.480) (10.957,0.480) (10.965,0.480) (10.974,0.640) (10.983,0.320) (10.991,0.320) (11.000,0.000)};
\node[tred,above] at (10.68,4.05) {attempt 4};
\draw (0.000,0) -- (0.000,-0.08) node[below] {0};
\draw (3.454,0) -- (3.454,-0.08) node[below] {20\,000};
\draw (6.907,0) -- (6.907,-0.08) node[below] {40\,000};
\draw (10.361,0) -- (10.361,-0.08) node[below] {60\,000};
\end{tikzpicture}
\caption{Cost $E$ against the number of moves in an annealing search for $\W(68,55)$ (illustrative re-run, see text): three attempts end with $E>0$ and the fourth reaches $E=0$.}\label{fig:trace}
\end{figure}

\subsection{Stage 3: skew-Hadamard doubling}\label{sec:skew}

The six pairs $(16,15)$, $(40,39)$, $(56,55)$, $(64,63)$, $(88,87)$, $(96,95)$ get the status \texttt{none\_gs} in the first program, since no Goethals--Seidel matrix exists for them (Proposition~\ref{prop:parity}), and a second program, \texttt{remaining\_cases.py}, constructs them. A \emph{skew-Hadamard matrix} of order $n_0$ is a Hadamard matrix $H$ with $H+H\T=2I$ (equivalently: diagonal entries $1$ and $H_{ij}=-H_{ji}$ for $i\ne j$). By Proposition~\ref{prop:paley1}, for $n_0=q+1$ with $q\equiv3\pmod4$ a prime power, $H=I+S$ is skew-Hadamard: $H+H\T=2I+S+S\T=2I$.

\begin{lemma}\label{lem:double}
If $H$ is a skew-Hadamard matrix of order $n_0$ then
\[
H'=\begin{pmatrix}H&H\\-H\T&H\T\end{pmatrix}
\]
is a skew-Hadamard matrix of order $2n_0$, and $H'-I$ is a $\W(2n_0,2n_0-1)$.
\end{lemma}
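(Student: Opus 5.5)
The plan is to verify three things in turn, using only the block rule of Section~\ref{sec:blocks} and the defining identities of $H$: namely $HH\T=n_0I$ and $H+H\T=2I$. Recall that $H\T H=n_0I$ also holds, since $H$ is square. First I will check that $H'$ is Hadamard. Its entries are $\pm1$, since those of $H$ and $H\T$ are. By the block rule,
\[
H'H'^{\top}=\begin{pmatrix}HH\T+HH\T&-HH+HH\\-H\T H\T+H\T H\T&H\T H+H\T H\end{pmatrix}=2n_0I_{2n_0},
\]
where the transpose of $H'$ is $\begin{pmatrix}H\T&-H\\H\T&H\end{pmatrix}$ (transpose the block arrangement and each block). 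The off-diagonal blocks cancel term by term, without using skewness. The diagonal blocks use $HH\T=H\T H=n_0I$.

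Second, I will check skewness:
\[
H'+H'^{\top}=\begin{pmatrix}H+H\T&H-H\\-H\T+H\T&H\T+H\end{pmatrix}=\begin{pmatrix}2I&0\\0&2I\end{pmatrix}=2I_{2n_0},
\]
so $H'$ is skew-Hadamard of order $2n_0$.

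Third, I will show that any skew-Hadamard $H'$ of order $N=2n_0$ gives $S'=H'-I$ as a $\W(N,N-1)$. Its diagonal is $0$ (diagonal entries of a skew-Hadamard matrix are $1$, since $2H'_{ii}=2$), and its off-diagonal entries are those of $H'$, hence $\pm1$. Skewness gives $S'^{\top}=H'^{\top}-I=2I-H'-I=-S'$. Then
\[
S'S'^{\top}=(H'-I)(H'^{\top}-I)=H'H'^{\top}-(H'+H'^{\top})+I=NI-2I+I=(N-1)I,
\]
mirroring the computation in Proposition~\ref{prop:paley1} in reverse.

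The only point needing care is the transpose of the block matrix and the signs in the off-diagonal blocks, together with remembering that $H\T H=n_0I$ requires the square-matrix remark from the Introduction. Everything else is direct.
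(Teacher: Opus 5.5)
Your proposal is correct and follows the paper's own proof essentially line by line. It uses the same block computation of $H'{H'}\T$ with $HH\T=H\T H=n_0I$, the same blockwise check that $H'+{H'}\T=2I$, and the same expansion $(H'-I)(H'-I)\T=H'{H'}\T-H'-{H'}\T+I=(2n_0-1)I$ after observing that $H'$ has unit diagonal (your extra remark that $H'-I$ is skew-symmetric is true but not needed).
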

\begin{proof}
$H'$ has entries $\pm1$. Its transpose is ${H'}\T=\begin{pmatrix}H\T&-H\\H\T&H\end{pmatrix}$, so
\[
H'{H'}\T=\begin{pmatrix}HH\T+HH\T&-HH+HH\\-H\T H\T+H\T H\T&H\T H+H\T H\end{pmatrix}=\begin{pmatrix}2n_0I&0\\0&2n_0I\end{pmatrix},
\]
using $HH\T=H\T H=n_0I$ (for a square matrix, $HH\T=n_0I$ implies $H\T H=n_0I$). Also $H'+{H'}\T=\begin{pmatrix}H+H\T&H-H\\-H\T+H\T&H\T+H\end{pmatrix}=2I$. In particular the diagonal of $H'$ is $1$, so $H'-I$ has zero diagonal, entries in $\{-1,0,1\}$ and exactly $2n_0-1$ nonzero entries (the $\pm1$ entries off the diagonal) in each row; and
\[
(H'-I)(H'-I)\T=H'{H'}\T-H'-{H'}\T+I=2n_0I-2I+I=(2n_0-1)I.\qedhere
\]
\end{proof}
For the pair $(n,n-1)$ the program takes the first element $n_0$ of the chain $n,n/2,n/4,\dots$ for which $n_0-1$ is a prime power $\equiv3\pmod4$, builds $I+S$ of order $n_0$ and doubles until the order is $n$. In the six cases of the catalogue one doubling suffices: $n_0=n/2$ with $q=n/2-1\in\{7,19,27,31,43,47\}$ for $n=16,40,56,64,88,96$ (method \texttt{skew\_hadamard}; the recipe stores $n$, the base $n_0$ and $q$).

\emph{Example.} $n=16$: $q=7$, $H=I+S$ has order $8$; one doubling gives the $16\times16$ matrix $H'$, and $\W(16,15)=H'-I$ (Figure~\ref{fig:skew}). A remark on conventions: the script \texttt{remaining\_cases.py} indexes the character matrix as $\chi(i-j)$ instead of $\chi(j-i)$ used in \texttt{all\_chains.py} (and in Section~\ref{sec:paley}); for $q\equiv3\pmod4$ this replaces $Q$ by $-Q$, so the Paley matrix used in this stage is a skew conference matrix of the same kind (the proof of Proposition~\ref{prop:paley1} applies verbatim), and it is the one drawn in Figure~\ref{fig:skew}.

\begin{figure}[ht]
\centering
\begin{tikzpicture}[x=0.300cm,y=0.300cm,baseline=(mid)]
\coordinate (mid) at (4.0,-4.0);
\foreach \c/\r in {0/0,1/0,2/0,3/0,4/0,5/0,6/0,7/0,1/1,4/1,6/1,7/1,1/2,2/2,5/2,7/2,1/3,2/3,3/3,6/3,2/4,3/4,4/4,7/4,1/5,3/5,4/5,5/5,2/6,4/6,5/6,6/6,3/7,5/7,6/7,7/7}{\fill[wplus] (\c,-\r) rectangle ++(1,-1);}
\foreach \c/\r in {0/1,2/1,3/1,5/1,0/2,3/2,4/2,6/2,0/3,4/3,5/3,7/3,0/4,1/4,5/4,6/4,0/5,2/5,6/5,7/5,0/6,1/6,3/6,7/6,0/7,1/7,2/7,4/7}{\fill[wminus] (\c,-\r) rectangle ++(1,-1);}
\draw[xstep=1,ystep=1,gray!45,line width=0.12pt] (0,-8) grid (8,0);
\draw[line width=0.6pt] (0,0) rectangle (8,-8);
\node[font=\scriptsize,below,align=center] at (4.0,-8) {$H=I+S$ ($q=7$)};
\end{tikzpicture}
\hspace{0.15cm}{\large $\longrightarrow$}\hspace{0.15cm}
\begin{tikzpicture}[x=0.220cm,y=0.220cm,baseline=(mid)]
\coordinate (mid) at (8.0,-8.0);
\foreach \c/\r in {0/0,1/0,2/0,3/0,4/0,5/0,6/0,7/0,8/0,9/0,10/0,11/0,12/0,13/0,14/0,15/0,1/1,4/1,6/1,7/1,9/1,12/1,14/1,15/1,1/2,2/2,5/2,7/2,9/2,10/2,13/2,15/2,1/3,2/3,3/3,6/3,9/3,10/3,11/3,14/3,2/4,3/4,4/4,7/4,10/4,11/4,12/4,15/4,1/5,3/5,4/5,5/5,9/5,11/5,12/5,13/5,2/6,4/6,5/6,6/6,10/6,12/6,13/6,14/6,3/7,5/7,6/7,7/7,11/7,13/7,14/7,15/7,1/8,2/8,3/8,4/8,5/8,6/8,7/8,8/8,4/9,6/9,7/9,8/9,9/9,10/9,11/9,13/9,1/10,5/10,7/10,8/10,10/10,11/10,12/10,14/10,1/11,2/11,6/11,8/11,11/11,12/11,13/11,15/11,2/12,3/12,7/12,8/12,9/12,12/12,13/12,14/12,1/13,3/13,4/13,8/13,10/13,13/13,14/13,15/13,2/14,4/14,5/14,8/14,9/14,11/14,14/14,15/14,3/15,5/15,6/15,8/15,9/15,10/15,12/15,15/15}{\fill[wplus] (\c,-\r) rectangle ++(1,-1);}
\foreach \c/\r in {0/1,2/1,3/1,5/1,8/1,10/1,11/1,13/1,0/2,3/2,4/2,6/2,8/2,11/2,12/2,14/2,0/3,4/3,5/3,7/3,8/3,12/3,13/3,15/3,0/4,1/4,5/4,6/4,8/4,9/4,13/4,14/4,0/5,2/5,6/5,7/5,8/5,10/5,14/5,15/5,0/6,1/6,3/6,7/6,8/6,9/6,11/6,15/6,0/7,1/7,2/7,4/7,8/7,9/7,10/7,12/7,0/8,9/8,10/8,11/8,12/8,13/8,14/8,15/8,0/9,1/9,2/9,3/9,5/9,12/9,14/9,15/9,0/10,2/10,3/10,4/10,6/10,9/10,13/10,15/10,0/11,3/11,4/11,5/11,7/11,9/11,10/11,14/11,0/12,1/12,4/12,5/12,6/12,10/12,11/12,15/12,0/13,2/13,5/13,6/13,7/13,9/13,11/13,12/13,0/14,1/14,3/14,6/14,7/14,10/14,12/14,13/14,0/15,1/15,2/15,4/15,7/15,11/15,13/15,14/15}{\fill[wminus] (\c,-\r) rectangle ++(1,-1);}
\draw[xstep=1,ystep=1,gray!45,line width=0.12pt] (0,-16) grid (16,0);
\draw[line width=0.6pt] (8,0) -- (8,-16);
\draw[line width=0.6pt] (0,-8) -- (16,-8);
\draw[line width=0.6pt] (0,0) rectangle (16,-16);
\node[font=\scriptsize,below,align=center] at (8.0,-16) {$H'=\begin{pmatrix}H&H\\-H^{\top}&H^{\top}\end{pmatrix}$};
\end{tikzpicture}
\hspace{0.15cm}{\large $\longrightarrow$}\hspace{0.15cm}
\begin{tikzpicture}[x=0.220cm,y=0.220cm,baseline=(mid)]
\coordinate (mid) at (8.0,-8.0);
\foreach \c/\r in {1/0,2/0,3/0,4/0,5/0,6/0,7/0,8/0,9/0,10/0,11/0,12/0,13/0,14/0,15/0,4/1,6/1,7/1,9/1,12/1,14/1,15/1,1/2,5/2,7/2,9/2,10/2,13/2,15/2,1/3,2/3,6/3,9/3,10/3,11/3,14/3,2/4,3/4,7/4,10/4,11/4,12/4,15/4,1/5,3/5,4/5,9/5,11/5,12/5,13/5,2/6,4/6,5/6,10/6,12/6,13/6,14/6,3/7,5/7,6/7,11/7,13/7,14/7,15/7,1/8,2/8,3/8,4/8,5/8,6/8,7/8,4/9,6/9,7/9,8/9,10/9,11/9,13/9,1/10,5/10,7/10,8/10,11/10,12/10,14/10,1/11,2/11,6/11,8/11,12/11,13/11,15/11,2/12,3/12,7/12,8/12,9/12,13/12,14/12,1/13,3/13,4/13,8/13,10/13,14/13,15/13,2/14,4/14,5/14,8/14,9/14,11/14,15/14,3/15,5/15,6/15,8/15,9/15,10/15,12/15}{\fill[wplus] (\c,-\r) rectangle ++(1,-1);}
\foreach \c/\r in {0/1,2/1,3/1,5/1,8/1,10/1,11/1,13/1,0/2,3/2,4/2,6/2,8/2,11/2,12/2,14/2,0/3,4/3,5/3,7/3,8/3,12/3,13/3,15/3,0/4,1/4,5/4,6/4,8/4,9/4,13/4,14/4,0/5,2/5,6/5,7/5,8/5,10/5,14/5,15/5,0/6,1/6,3/6,7/6,8/6,9/6,11/6,15/6,0/7,1/7,2/7,4/7,8/7,9/7,10/7,12/7,0/8,9/8,10/8,11/8,12/8,13/8,14/8,15/8,0/9,1/9,2/9,3/9,5/9,12/9,14/9,15/9,0/10,2/10,3/10,4/10,6/10,9/10,13/10,15/10,0/11,3/11,4/11,5/11,7/11,9/11,10/11,14/11,0/12,1/12,4/12,5/12,6/12,10/12,11/12,15/12,0/13,2/13,5/13,6/13,7/13,9/13,11/13,12/13,0/14,1/14,3/14,6/14,7/14,10/14,12/14,13/14,0/15,1/15,2/15,4/15,7/15,11/15,13/15,14/15}{\fill[wminus] (\c,-\r) rectangle ++(1,-1);}
\draw[xstep=1,ystep=1,gray!45,line width=0.12pt] (0,-16) grid (16,0);
\draw[line width=0.6pt] (8,0) -- (8,-16);
\draw[line width=0.6pt] (0,-8) -- (16,-8);
\draw[line width=0.6pt] (0,0) rectangle (16,-16);
\node[font=\scriptsize,below,align=center] at (8.0,-16) {$\W(16,15)=H'-I$};
\end{tikzpicture}
\caption{Skew-Hadamard doubling for $n=16$: the Hadamard matrix $H=I+S$ of order $8$ ($q=7$), its double $H'$ of order $16$, and $\W(16,15)=H'-I$ (stored matrix).}\label{fig:skew}
\end{figure}
\section{Exact verification}\label{sec:verify}

The validity of the catalogue rests on checks that are exact and independent of the search. Two checks are made.

\paragraph{At storing time.} Both construction programs (\texttt{all\_chains.py} and \texttt{remaining\_cases.py}) rebuild every candidate matrix $W$ and apply an exact check before accepting it: $W$ is $n\times n$, all entries lie in $\{-1,0,1\}$, every row and every column has exactly $k$ nonzero entries, and $WW\T=kI_n$ and $W\T W=kI_n$ hold with exact equality (no tolerance). The products are computed with floating-point (\texttt{float64}) matrix multiplication; since all entries are integers with absolute value at most $1$ and $n\le108$, every intermediate and final value is an integer of absolute value at most $108$, far below $2^{53}$, so these computations are exact. A candidate that fails the check is discarded (and the next method is tried).

\paragraph{Separate verification.} The script \texttt{verify\_weighing.py} then re-reads every stored file and repeats the checks using integer (\texttt{int64}) arithmetic only: for entries in $\{-1,0,1\}$ and $n\le108$ no overflow is possible. For Goethals--Seidel matrices it also checks that the stored matrix equals the array built from the stored sequences, and that $\sum_iw_i=k=\sum_is_i^2$. All $1512$ files pass, with no warnings (file \texttt{report.csv}).

\begin{proof}[Proof of Theorem~\ref{thm:main}]
For each $(n,k)\in\mathcal S$ the repository contains an explicit matrix $W$ with entries in $\{-1,0,1\}$, and the verification above shows $WW\T=kI_n$. This is the definition of a $\W(n,k)$. The $1512$ matrices together with the checks constitute the proof; no assumption about the success probability of the annealing is needed.
\end{proof}

\section{Results}\label{sec:results}

\paragraph{Methods.} Table~\ref{tab:methods} shows which method produced the stored matrix of each of the $1512$ pairs. In particular $961$ matrices come from classical constructions and $551$ from Goethals--Seidel arrays.

\paragraph{Computational effort.}
The searches were run on a laptop (Intel Core i7-9750H, 16~GB RAM, Python~3.11.4, NumPy~1.26.4). The times recorded in the log \texttt{all\_chains\_status.csv} (the elapsed time of the processing of each pair) add up to about $26.6$~h for $n\le100$ (of which $19.3$~h for $n=100$), $86.2$~h for $n=104$ and $99.7$~h for $n=108$; practically all of this is annealing. The hardest pair, $(104,93)$, needed $284{,}396$ annealing attempts in the logged run ($27.3$~h). The number of attempts includes the final, successful one. For comparison, the pair $(68,55)$ of Section~\ref{sec:anneal} needed four attempts and $1.65$~s.

\paragraph{Thirteen pairs that appear as unresolved in historical tables.}
Table~\ref{tab:cases} lists the stored Goethals--Seidel matrices for thirteen pairs with $n\le108$, which appear as unresolved in the historical tables consulted by the author, namely those of~\cite{handbook} and tables derived from~\cite{ks97}; the tables do not agree on every pair. We have not carried out an exhaustive literature search, and we do not claim that all of these cases were open: an unresolved entry in an older table does not establish present-day novelty, and later computational work such as~\cite{bdgks24,kkps12,kks12} reports resolving open cases of the Handbook~\cite{handbook}, without our having been able to check whether it covers any of these pairs. We would be grateful for pointers to earlier explicit constructions. The stored profile $(w_i,s_i)$ of each matrix is given in the table (in the order of the stored sequences, which need not be sorted), and the sequences, together with a picture of each matrix, are listed in Appendix~\ref{app:seq}.

\begin{table}[ht]
\centering
\small
\begin{tabular}{rrrlll}
\toprule
$n$ & $k$ & $m$ & $(w_0,\dots,w_3)$ & $(s_0,\dots,s_3)$ & attempts \\
\midrule
92 & 79 & 23 & (23,12,23,21) & (7,2,5,1) & 181 \\
100 & 87 & 25 & (24,25,21,17) & (6,5,5,1) & 4\,740 \\
100 & 91 & 25 & (24,21,21,25) & (0,9,1,3) & 23\,409 \\
100 & 93 & 25 & (25,24,22,22) & (7,6,2,2) & 23\,312 \\
100 & 95 & 25 & (23,23,24,25) & (5,5,6,3) & 8\,404 \\
104 & 95 & 26 & (23,25,22,25) & (9,3,2,1) & 184\,326 \\
108 & 79 & 27 & (13,17,26,23) & (3,3,6,5) & 6\,200 \\
108 & 87 & 27 & (13,21,26,27) & (1,7,6,1) & 72\,922 \\
108 & 93 & 27 & (18,26,22,27) & (2,2,2,9) & 12\,284 \\
108 & 95 & 27 & (17,25,26,27) & (1,3,2,9) & 177\,239 \\
108 & 99 & 27 & (27,27,25,20) & (9,1,1,4) & 51\,121 \\
108 & 101 & 27 & (24,27,24,26) & (2,9,0,4) & 50\,943 \\
108 & 103 & 27 & (27,25,26,25) & (1,1,10,1) & 135\,574 \\
\bottomrule
\end{tabular}
\caption{Stored Goethals--Seidel matrices for the thirteen pairs discussed in Section~\ref{sec:results}. The last column is the number of annealing attempts (including the successful one) logged for the successful run of the original computation.}\label{tab:cases}
\end{table}

\section{Availability and limitations}\label{sec:availability}

The code, the $1512$ matrices (one JSON file per pair, containing the matrix, the method and the recipe), the verification report and the search log are in the repository \url{https://github.com/adrianmaciasquintero/weighing-matrices} (version~1.1.0), archived at Zenodo~\cite{zenodo}, \href{https://doi.org/10.5281/zenodo.23484194}{doi:10.5281/zenodo.23484194}, under the Apache License~2.0.

\paragraph{Verifying the archived catalogue.} This is quick and deterministic and does not repeat the search:
\begin{center}\small\ttfamily
python verify\_weighing.py solutions --first-only
\end{center}

\paragraph{Repeating the discovery.} The construction programs skip every pair that is already stored in the output directory, so they must be run on a \emph{new, empty} output directory, and the search is stochastic and expensive (hundreds of hours on a laptop):
\begin{center}\small\ttfamily
python all\_chains.py --nmin 4 --nmax 108 --out regenerated\_solutions\\
python remaining\_cases.py --out regenerated\_solutions\\
python verify\_weighing.py regenerated\_solutions --first-only
\end{center}
The annealing uses no fixed random seed by default, so a new run will in general produce different (equally valid) matrices and different running times; the archived files, not a particular random trajectory, are the reference certificates. The catalogue gives one matrix per pair and no classification up to equivalence. A failure of the search to find a matrix is not evidence of non-existence, except in the cases excluded by a valid mathematical or exhaustive argument (Proposition~\ref{prop:parity} and Section~\ref{sec:exh}).

\section*{Use of artificial intelligence}

The idea of the project and the algorithm described in this note are the author's own: it was the author who specified the algorithm used (the order of the methods, the Goethals--Seidel search with profile filter and simulated annealing, and the exact verification). An artificial-intelligence model, Claude (Anthropic; model Claude Sonnet 5.5), was used to write in Python the programs (\texttt{.py} files) that implement that algorithm, following the author's instructions. In preparing this manuscript, Claude Sonnet 5.5 (Anthropic) and GPT-5.6 Luna (OpenAI) also helped to suggest revisions to the exposition and assist with LaTeX typesetting and the figures, working from the author's code and data and following the author's instructions and corrections; the author has reviewed the content and takes full responsibility for it. No AI model is involved in the execution of the search programs. The search is performed algorithmically using exhaustive search and simulated annealing, and each resulting matrix is verified using exact integer arithmetic.

\appendix
\section{Sequences for the pairs of Table~\ref{tab:cases}}\label{app:seq}

For each pair we draw the stored matrix (thick lines separate the sixteen blocks of size $m\times m$) and list the four ternary sequences $A,B,C,D$ of length $m=n/4$ of the stored Goethals--Seidel matrix, written with \texttt{+}, \texttt{-}, \texttt{0} for $1,-1,0$ and starting at position $t=0$ (the entry $x_0$ is the diagonal entry of the circulant $\circm(x)$, whose $(i,j)$ entry is $x_{(j-i)\bmod m}$). The matrix is the array $G$ of~\eqref{eq:G}. For each pair, $\sum w_i=k=\sum s_i^2$ was checked, and the matrix rebuilt from the sequences was checked to satisfy $WW\T=W\T W=kI$ in exact integer arithmetic.

\Needspace{14\baselineskip}
\paragraph{$\W(92,79)$, $m=23$.} Weights $(23,12,23,21)$, sums $(7,2,5,1)$, $7^2+2^2+5^2+1^2=79$.\par\smallskip
\noindent\begin{minipage}[c]{0.60\textwidth}\centering

\end{minipage}\hfill
\begin{minipage}[c]{0.38\textwidth}{\ttfamily\footnotesize\noindent A = +--+--+++-++--++++++++-\\B = 00-+00000+++-0+-0++00--\\C = -+++--+-+-+++++-+++-+--\\D = ---++++-+-++-+--++0-0-+\par}\end{minipage}\par\medskip
\Needspace{14\baselineskip}

\newpage

\paragraph{$\W(100,87)$, $m=25$.} Weights $(24,25,21,17)$, sums $(6,5,5,1)$, $6^2+5^2+5^2+1^2=87$.\par\smallskip
\noindent\begin{minipage}[c]{0.60\textwidth}\centering
%
\end{minipage}\hfill
\begin{minipage}[c]{0.38\textwidth}{\ttfamily\footnotesize\noindent A = ---+--0++-+++-+-++++++++-\\B = -+-+--+++--+-++++--++++-+\\C = --0+-++-+0-0-++-++0-+++++\\D = 00+-000+--++-++++0--00-+-\par}\end{minipage}\par\medskip
\Needspace{14\baselineskip}
\paragraph{$\W(100,91)$, $m=25$.} Weights $(24,21,21,25)$, sums $(0,9,1,3)$, $0^2+9^2+1^2+3^2=91$.\par\smallskip
\noindent\begin{minipage}[c]{0.60\textwidth}\centering
%
\end{minipage}\hfill
\begin{minipage}[c]{0.38\textwidth}{\ttfamily\footnotesize\noindent A = ---+---++-++-0-+-+++-++-+\\B = -+0+++--00-++++-++++-++0+\\C = -0---++00----+++++++-+0-+\\D = ++--+++---++-+--++-+-+++-\par}\end{minipage}\par\medskip
\Needspace{14\baselineskip}
\paragraph{$\W(100,93)$, $m=25$.} Weights $(25,24,22,22)$, sums $(7,6,2,2)$, $7^2+6^2+2^2+2^2=93$.\par\smallskip
\noindent\begin{minipage}[c]{0.60\textwidth}\centering
%
\end{minipage}\hfill
\begin{minipage}[c]{0.38\textwidth}{\ttfamily\footnotesize\noindent A = ---++--+-+-++-+-+++++++++\\B = --0+-++--++-+-+++++-+++-+\\C = -+---+0-+++-+++---0+++0+-\\D = ++0-+-++-++0+--0+++-+----\par}\end{minipage}\par\medskip
\Needspace{14\baselineskip}
\paragraph{$\W(100,95)$, $m=25$.} Weights $(23,23,24,25)$, sums $(5,5,6,3)$, $5^2+5^2+6^2+3^2=95$.\par\smallskip
\noindent\begin{minipage}[c]{0.60\textwidth}\centering
%
\end{minipage}\hfill
\begin{minipage}[c]{0.38\textwidth}{\ttfamily\footnotesize\noindent A = 0+-0+-+++-++-++++--++---+\\B = ++++0-+0+++-+-++--+--+-+-\\C = -++++++0----+++-+++--+-++\\D = -+-++---+-+++-+++-++++---\par}\end{minipage}\par\medskip
\Needspace{14\baselineskip}
\paragraph{$\W(104,95)$, $m=26$.} Weights $(23,25,22,25)$, sums $(9,3,2,1)$, $9^2+3^2+2^2+1^2=95$.\par\smallskip
\noindent\begin{minipage}[c]{0.60\textwidth}\centering
%
\end{minipage}\hfill
\begin{minipage}[c]{0.38\textwidth}{\ttfamily\footnotesize\noindent A = +-+-+++++++++-+0--+0++-0-+\\B = ++---+++--+----+++-++-0+++\\C = 0+--0-++0+-++-++-0-+++-+--\\D = +-+---++--++-+-+---+-+++0+\par}\end{minipage}\par\medskip
\Needspace{14\baselineskip}
\paragraph{$\W(108,79)$, $m=27$.} Weights $(13,17,26,23)$, sums $(3,3,6,5)$, $3^2+3^2+6^2+5^2=79$.\par\smallskip
\noindent\begin{minipage}[c]{0.60\textwidth}\centering
%
\end{minipage}\hfill
\begin{minipage}[c]{0.38\textwidth}{\ttfamily\footnotesize\noindent A = 00++00+00-++-0+00-00-+0-+00\\B = -+--0-0-+0000+-0++++-0+0+0+\\C = ++++++++---++-+--++-+0+---+\\D = -+-0++-+++-+0+--0-++++0--++\par}\end{minipage}\par\medskip
\Needspace{14\baselineskip}
\paragraph{$\W(108,87)$, $m=27$.} Weights $(13,21,26,27)$, sums $(1,7,6,1)$, $1^2+7^2+6^2+1^2=87$.\par\smallskip
\noindent\begin{minipage}[c]{0.60\textwidth}\centering
%
\end{minipage}\hfill
\begin{minipage}[c]{0.38\textwidth}{\ttfamily\footnotesize\noindent A = 0-00-+0-0+00-++++00+-00000-\\B = +0+++++--0+--0-+++0+-+++-00\\C = +++-+++++--+-+-+-++0+--+--+\\D = +++----+-++---+-++-+++--++-\par}\end{minipage}\par\medskip
\Needspace{14\baselineskip}
\paragraph{$\W(108,93)$, $m=27$.} Weights $(18,26,22,27)$, sums $(2,2,2,9)$, $2^2+2^2+2^2+9^2=93$.\par\smallskip
\noindent\begin{minipage}[c]{0.60\textwidth}\centering
%
\end{minipage}\hfill
\begin{minipage}[c]{0.38\textwidth}{\ttfamily\footnotesize\noindent A = 0++---0+++0+0+000+---0+-+-0\\B = +--++0++++++---+-+--+--+--+\\C = +-+-++-++00-+0-0-++-++-0-+-\\D = -++++++++---+++-++--++++--+\par}\end{minipage}\par\medskip
\Needspace{14\baselineskip}
\paragraph{$\W(108,95)$, $m=27$.} Weights $(17,25,26,27)$, sums $(1,3,2,9)$, $1^2+3^2+2^2+9^2=95$.\par\smallskip
\noindent\begin{minipage}[c]{0.60\textwidth}\centering
%
\end{minipage}\hfill
\begin{minipage}[c]{0.38\textwidth}{\ttfamily\footnotesize\noindent A = -+--00-+0+0-000-+0+00++--++\\B = -+0+-++++-0-----+-+-++-++++\\C = -++--++++---+-+++++--++-0--\\D = +--++-+-+++++-++-+++--+-+++\par}\end{minipage}\par\medskip
\Needspace{14\baselineskip}
\paragraph{$\W(108,99)$, $m=27$.} Weights $(27,27,25,20)$, sums $(9,1,1,4)$, $9^2+1^2+1^2+4^2=99$.\par\smallskip
\noindent\begin{minipage}[c]{0.60\textwidth}\centering
%
\end{minipage}\hfill
\begin{minipage}[c]{0.38\textwidth}{\ttfamily\footnotesize\noindent A = +---+--+++-+++++-+-++++++-+\\B = +++--+++++-+--+---+++----+-\\C = ++-0-+-+--++-+--+---+++++-0\\D = -+0-+-++0++0-0+0++-+0+---+0\par}\end{minipage}\par\medskip
\Needspace{14\baselineskip}
\paragraph{$\W(108,101)$, $m=27$.} Weights $(24,27,24,26)$, sums $(2,9,0,4)$, $2^2+9^2+0^2+4^2=101$.\par\smallskip
\noindent\begin{minipage}[c]{0.60\textwidth}\centering
%
\end{minipage}\hfill
\begin{minipage}[c]{0.38\textwidth}{\ttfamily\footnotesize\noindent A = --+---++0+-+-+--+-+++0-++0+\\B = +-++-+---+++-+++++++++-+--+\\C = --++-+++0-++0+---+++-+---0-\\D = +++++--++--+-+-+0--++++--+-\par}\end{minipage}\par\medskip
\Needspace{14\baselineskip}
\paragraph{$\W(108,103)$, $m=27$.} Weights $(27,25,26,25)$, sums $(1,1,10,1)$, $1^2+1^2+10^2+1^2=103$.\par\smallskip
\noindent\begin{minipage}[c]{0.60\textwidth}\centering
%
\end{minipage}\hfill
\begin{minipage}[c]{0.38\textwidth}{\ttfamily\footnotesize\noindent A = +---++--++-++--+-+--++--+++\\B = ++0-0+-+++-----+-+++-+-+--+\\C = -+++++0+++++-++-+----+-++++\\D = -+--0+0+++--+-++-+-+-+++---\par}\end{minipage}\par


\medskip

\newpage 

\begin{thebibliography}{99}
\bibitem{bdgks24} R.~Ben-Av, G.~Dula, A.~Goldberger, I.~Kotsireas and Y.~Strassler, New weighing matrices via partitioned group actions, \emph{Discrete Math.} \textbf{347} (2024), no.~5, article 113908.
\bibitem{handbook} C.~J.~Colbourn and J.~H.~Dinitz (eds.), \emph{Handbook of Combinatorial Designs}, 2nd ed., Chapman \& Hall/CRC, 2007; see in particular R.~Craigen and H.~Kharaghani, Orthogonal designs, Section~V.2, pp.~280--295.
\bibitem{dinitz} J.~H.~Dinitz, New results in Part V (updates to the \emph{Handbook of Combinatorial Designs}), web page, last edited 15 February 2018, \url{https://site.uvm.edu/jdinitz/?page_id=404} (accessed 23 October 2026).
\bibitem{gsa23} S.~D.~Georgiou, S.~Stylianou and H.~Alrweili, On symmetric weighing matrices, \emph{Mathematics} \textbf{11} (2023), no.~9, article 2076.
\bibitem{gpw74} A.~V.~Geramita, N.~J.~Pullman and J.~S.~Wallis, Families of weighing matrices, \emph{Bull. Austral. Math. Soc.} \textbf{10} (1974), 119--122.
\bibitem{gsbook} A.~V.~Geramita and J.~Seberry, \emph{Orthogonal Designs: Quadratic Forms and Hadamard Matrices}, Lecture Notes in Pure and Applied Mathematics~45, Marcel Dekker, New York, 1979.
\bibitem{gw74} A.~V.~Geramita and J.~Seberry Wallis, Orthogonal designs III: Weighing matrices, \emph{Utilitas Math.} \textbf{6} (1974), 209--236.
\bibitem{gs70} J.~M.~Goethals and J.~J.~Seidel, A skew Hadamard matrix of order 36, \emph{J. Austral. Math. Soc.} \textbf{11} (1970), 343--344.
\bibitem{gbds26} A.~Goldberger, R.~Ben-Av, G.~Dula and Y.~Strassler, Constructing, classifying and studying the space of small integer weighing matrices, arXiv:2603.17552 (2026).
\bibitem{hm12} M.~Harada and A.~Munemasa, On the classification of weighing matrices and self-orthogonal codes, \emph{J. Combin. Des.} \textbf{20} (2012), 40--57.
\bibitem{kgv} S.~Kirkpatrick, C.~D.~Gelatt~Jr.\ and M.~P.~Vecchi, Optimization by simulated annealing, \emph{Science} \textbf{220} (1983), 671--680.
\bibitem{kkps12} I.~S.~Kotsireas, C.~Koukouvinos, P.~M.~Pardalos and D.~E.~Simos, Competent genetic algorithms for weighing matrices, \emph{J. Comb. Optim.} \textbf{24} (2012), 508--525.
\bibitem{kks12} I.~S.~Kotsireas, C.~Koukouvinos and J.~Seberry, New weighing matrices constructed from two circulant submatrices, \emph{Optim. Lett.} \textbf{6} (2012), 211--217.
\bibitem{ks93} C.~Koukouvinos and J.~Seberry, Some new weighing matrices using sequences with zero autocorrelation function, \emph{Australas. J. Combin.} \textbf{8} (1993), 143--152.
\bibitem{ks97} C.~Koukouvinos and J.~Seberry, Weighing matrices and their applications, \emph{J. Statist. Plann. Inference} \textbf{62} (1997), 91--101.
\bibitem{zenodo} A.~Mac\'ias-Quintero, \emph{Weighing Matrices $\W(n,k)$ for $n\equiv0\pmod4$, $n\le108$: Constructions and Exact Verification}, software, version~1.1.0, Zenodo, 2026, \href{https://doi.org/10.5281/zenodo.23484194}{doi:10.5281/zenodo.23484194}. Source repository: \url{https://github.com/adrianmaciasquintero/weighing-matrices}.
\bibitem{paley} R.~E.~A.~C.~Paley, On orthogonal matrices, \emph{J. Math. Phys.} \textbf{12} (1933), 311--320.
\bibitem{sy20} J.~Seberry and M.~Yamada, \emph{Hadamard Matrices: Constructions using Number Theory and Algebra}, Wiley, Hoboken, 2020.
\end{thebibliography}
\end{document}